\ifodd\value{page}
\definecolor{berkeley_lab_blue}{RGB}{0,57,90}
\newtheorem{myass}{Assumption}
\newtheorem{myprop}{Proposition}
\title{\textbf{Fully Implicit Multidimensional Hybrid Upwind Scheme for Coupled Flow and Transport}}
\date{}
\author[lbl]{Fran\c cois P. Hamon\corref{cor1}}
\ead{fhamon@lbl.gov}
\author[chevron]{Bradley T. Mallison}
\address[lbl]{Center for Computational Sciences and Engineering, Lawrence Berkeley National Laboratory, Berkeley, USA}
\address[chevron]{Chevron ETC, 6001 Bollinger Canyon Rd, San Ramon, USA}
\begin{document}

\begin{abstract} 
Robust and accurate fully implicit finite-volume schemes applied to Darcy-scale multiphase flow and transport in porous media 
are highly desirable. Recently, a smooth approximation of the saturation-dependent flux coefficients based on 
Implicit Hybrid Upwinding (IHU) has been proposed to improve the 
nonlinear convergence in fully implicit simulations with buoyancy. Here, we design a truly multidimensional extension 
of this approach that retains the simplicity and robustness of IHU while reducing the sensitivity of the results 
to the orientation of the computational (Cartesian) grid. This is achieved with the introduction of an adaptive, local coupling 
between the fluxes that takes the flow pattern into account. We analyze the mathematical properties 
of the proposed methodology to show that the scheme is monotone in the presence of competing viscous and buoyancy forces 
and yields saturations remaining between physical bounds. Finally, we demonstrate the efficiency and accuracy of the scheme 
on challenging two-dimensional two-phase examples with buoyancy, with an emphasis on the reduction of the grid orientation effect.
\end{abstract}

\begin{keyword} Porous media \sep Two-phase flow and transport \sep Implicit finite-volume schemes \sep Grid-orientation effect 
\sep Truly multidimensional schemes \sep Upwinding
\end{keyword}

\maketitle

\section{\label{section_introduction}Introduction}

The numerical simulation of subsurface flow requires the design
of accurate and robust discretization schemes for the highly 
nonlinear partial differential equations (PDEs) governing
coupled flow and transport in porous media. The high heterogeneity 
characterizing geological porous media constitutes a challenge
for the computational models used in practice. In realistic 
models, the rock properties (permeability and porosity) can
vary by several orders of magnitude, which results in a vast range 
of flow velocities and time scales in the hyperbolic transport
of species. High localized flow velocities impose severe 
restrictions on the time step size for explicit time integration
methods. Therefore, a fully implicit, backward-Euler method 
guaranteeing unconditional stability is often preferred in the 
case of strongly nonlinear problems with high heterogeneity.

However, constructing accurate and efficient fully implicit
schemes for subsurface flow simulation is a challenging task.
In particular, the nonlinearity of the saturation-dependent
coefficients present in the flux terms makes the application of
high-resolution schemes challenging despite some encouraging
recent efforts \citep{mykkeltvedt2017fully,arbogast2019neumann}.
Therefore, these coefficients are
often approximated with robust first-order upwind schemes such
as the commonly used 
Phase-Potential Upwinding \citep[PPU,][]{peaceman2000fundamentals,aziz1979petroleum,sammon1988analysis,brenier1991upstream}.
Despite limitations with respect to resolution, first-order schemes
are preferred in practice because they can accommodate large time
steps when combined with Newton or quasi-Newton nonlinear iterations.
But, even limited to the family of first-order schemes, the
choice of an upwinding strategy has a significant impact on the
accuracy of the flow predictions and on the computational cost of
the numerical scheme. 

Previous authors have shown that nonlinear convergence issues can
result from the approximation of the interfacial phase flux
\citep{wang2013trust,li2014nonlinearity} and can consequently
undermine the performance of the scheme since the fully implicit
discretization requires solving large nonlinear systems at each
time step. Specifically, when the approximation is based on PPU,
convergence difficulties may arise when a small change in the
primary variables -- pressure and saturation -- causes an abrupt
change from cocurrent flow to countercurrent flow, and vice versa.
This issue, referred to as the ``flip-flopping'' of the phase
fluxes, was recently addressed with Implicit Hybrid Upwinding (IHU) 
in \cite{lee2015hybrid,lee2016c,lee2018hybrid,hamon2016analysis,hamon2016implicit,hamon2016capillary,moncorge2018consistent}. The IHU
strategy is based on a separate evaluation of the viscous, buoyancy,
and capillary parts to achieve a differentiable flux in the highly
nonlinear transport problem. This reduces the flip-flopping of the
phase fluxes and significantly improves the nonlinear convergence of 
Newton's method while producing a similar accuracy as that of the
standard PPU scheme.

The computational challenges are even greater in multiphase flows
where the displacing phase has greater mobility than the displaced
phase. Both the standard PPU and IHU upwinding procedures have
limited accuracy for adverse mobility ratios because the
saturation-dependent coefficients are approximated
dimension-by-dimension. That is, the construction of the flux
at an interface is entirely based on the saturation information
contained in the two adjacent control volumes. A well-known issue
resulting from this approach is the high sensitivity of the flow
pattern to the orientation of the computational grid. In practice,
these dimension-by-dimension schemes are therefore inaccurate
when the flow is not aligned with the orientation of the
computational grid. This unphysical grid orientation effect has
been studied extensively for porous media problems 
\citep{yanosik1979nine,pruess1983seven,shubin1984analysis,brand1991grid}. So-called multidimensional schemes taking into account the
saturation information in an extended stencil around each
interface have been designed to overcome this limitation of
two-point upwinding. Following the work of 
\cite{colella1990multidimensional,koren1991low,roe1992optimum,leveque1997wave,berger2003h} in computational fluid dynamics, the
multidimensional schemes tailored for subsurface flow simulation
include 
\cite{chen1993minimization,arbogast2006fully,velasco2007quadrilateral,lamine2010higher,lamine2013higher,lamine2015multidimensional,eymard2012grid}.

But, few of these schemes can be efficiently combined with a
fully implicit discretization targeting highly nonlinear problems
with competing viscous and buoyancy forces. In \cite{kozdon2011multidimensional,kozdon2011batstoicmame,keilegavlen2012multidimensional},
the authors design a multidimensional scheme based on PPU that
remains robust in fully implicit simulations of coupled multiphase
flow and transport, and that is provably monotone in the presence
of buoyancy. In this paper, we build on their approach to construct
an efficient and accurate multidimensional IHU scheme. As in 
\cite{kozdon2011multidimensional}, we neglect capillary forces
and focus on the mixed elliptic-hyperbolic incompressible two-phase
flow and transport problem with viscous and buoyancy forces
discretized on two-dimensional Cartesian grids. We propose a scheme
that reduces the sensitivity of the results to the orientation of
the computational grid while preserving a robust behavior of the
Newton solver in the presence of competing viscous and buoyancy
forces. 

This is achieved by adapting the methodology previously used to
construct the multidimensional PPU scheme. Specifically, to compute
the viscous numerical flux, we introduce a local coupling between
the phase fluxes belonging to the same interaction region of the
dual grid. The buoyancy flux is computed separately and is upwinded
based on fixed density differences. Importantly, this approach
yields a numerical flux that is a monotone function of the
saturations in the presence of viscous and buoyancy forces. We also
prove that the saturation solution to the proposed fully implicit
scheme remains between physical bounds, zero and one. Using
numerical examples, we demonstrate that the proposed scheme is
significantly less sensitive to the orientation of the grid than
the two-point PPU and IHU approaches. We also illustrate the
robustness of the multidimensional IHU by applying the scheme to
challenging nonlinear test cases with large time steps.

We first describe the mathematical model describing coupled
multiphase flow and transport with viscous and buoyancy forces
in Section~\ref{section_governing_equations}. Then, in
Section~\ref{section_numerical_scheme}, we proceed with the
description of the fully implicit finite-volume scheme and we
define the dual grid as the union of interaction regions. These
interaction regions are used to introduce the truly multidimensional
IHU in
Section~\ref{section_multidimensional_implicit_hybrid_upwinding},
and to study its mathematical properties in
Section~\ref{section_mathematical_properties}. Finally, using the
nonlinear solver described in Section~\ref{section_nonlinear_solver},
we demonstrate the accuracy and robustness of the multidimensional
scheme using numerical examples with buoyancy in
Section~\ref{section_numerical_examples}.

\section{\label{section_governing_equations}Governing equations}

In this work, we consider two immiscible and incompressible fluid
phases flowing in an incompressible porous medium. Mass conservation
for phase $\ell$ is expressed as
\begin{equation}
\phi \frac{\partial S_{\ell}}{\partial t} + \nabla \cdot \mathbf{u}_{\ell}  = q_{\ell}
\qquad \forall \, \mathbf{x} \in \Omega \subset \mathbb{R}^2, \quad
\forall \, \ell \in \{ \textit{nw}, w\},
\label{mass_conservation}
\end{equation}
where $\phi(\mathbf{x})$ is the porosity of the medium and $t$
is time. The source/sink term is denoted by $q_{\ell}$, with
the convention that $q_{\ell} > 0$ for injection, and $q_{\ell} < 0$
for production. The saturation $S_{\ell}(\mathbf{x},t)$ represents
the fraction of the pore volume occupied by phase $\ell$, with
the constraint that the sum of saturations is equal to one,
\begin{equation}
\sum_{\ell} S_{\ell} = 1. \label{saturation_constraint}
\end{equation}
Using the multiphase extension of Darcy's law, the phase velocity
$\mathbf{u}_{\ell}$ is written as a function of the phase potential
gradient $\nabla \Phi_{\ell}$ as
\begin{equation}
\mathbf{u}_{\ell} := -k \lambda_{\ell} \nabla \Phi_{\ell} = -k \lambda_{\ell} \big( \nabla p - \mathbf{g}_{\ell} \big) 
\qquad \text{with } \, \mathbf{g}_{\ell} = \rho_{\ell} g \nabla z, 
\quad  \forall \, \ell \in \{ \textit{nw}, w\},
\label{Darcy_s_law}
\end{equation}
where capillary forces are neglected. In (\ref{Darcy_s_law}),
$p(\mathbf{x},t)$ is the pressure, $k(\mathbf{x})$ is the scalar
rock permeability,  
$\lambda_{\ell}(S_{\textit{nw}}, S_w) = k_{r \ell}(S_{\textit{nw}}, S_w) / \mu_{\ell}$
is the phase mobility -- defined as the phase relative permeability 
divided by the phase viscosity --, $\rho_{\ell}$ is the phase
density, $g$ is the gravitational acceleration, and $z$ is the depth. 
We require that the mobilities be monotone functions of the
saturations, which is realistic and holds for two-phase relative
permeability models such as the Corey model
\citep{corey1954interrelation}.
\begin{myass}{(Phase mobilities).} \label{assumption_mobilities}
For $\ell \in \{\textit{nw}, w \}$, the mobility of phase $\ell$ is
positive, and a differentiable function of the saturations.
Furthermore, the mobility of phase $\ell$ is increasing with
respect to its saturation and decreasing with respect to the
saturation of the other phases: 
\begin{equation} 
\frac{\partial \lambda_{\ell}}{\partial S_{\ell}} \geq 0 \qquad \text{and} \qquad \frac{\partial \lambda_{\ell}}{\partial S_{m}} \leq 0 
\quad \forall \,  m \neq \ell.
\end{equation}
\end{myass}

The system of mixed elliptic-hyperbolic governing PDEs can be
written in two equivalent forms, where the linearly independent
primary variables are the pressure, $p$, and the wetting-phase
saturation, denoted by $S = S_w$ for simplicity. In the remainder
of this paper, we use the saturation constraint
(\ref{saturation_constraint}) to write the saturation-dependent
properties as a function of $S$ only. The first form of the
governing PDEs is obtained by inserting the expression of the
phase velocities given by Darcy's law (\ref{Darcy_s_law}) 
into the mass conservation equations (\ref{mass_conservation}), 

\begin{equation}
\phi \frac{\partial S_{\ell}}{\partial t} - \nabla \cdot \big( k \lambda_{\ell} ( \nabla p - \mathbf{g}_{\ell} ) \big) = q_{\ell}
\qquad \forall \, \ell \in \{ \textit{nw}, w \}. 
\label{governing_pdes_batstoiplus}
\end{equation}
The second form of the governing PDEs is split into a flow problem
and a transport problem. The flow problem is obtained by summing
(\ref{mass_conservation}) over all phases and using the saturation 
constraint (\ref{saturation_constraint}) to obtain:
\begin{equation}
\nabla \cdot \boldsymbol{u}_T (\boldsymbol{x}, p, S) = \sum_{\ell} q_{\ell} = q_T,  \label{continuous_pressure_equation}
\end{equation}
where the total velocity, $\mathbf{u}_T$, is defined as the sum of
the phase velocities:
\begin{equation}
\mathbf{u}_T(\mathbf{x}, p, S) := \sum_{\ell} \mathbf{u}_{\ell}(\mathbf{x}, p, S) 
= -k \lambda_{T} \nabla p + k \sum_{\ell} \lambda_{\ell} \mathbf{g}_{\ell}. \label{total_velocity}
\end{equation}
We assume that the following assumption on the total mobility holds: 
\begin{myass}{(Total mobility).} \label{ass_total_mobility}
The total mobility $\lambda_T = \sum_{\ell} \lambda_{\ell}$ is
bounded away from zero:
\begin{equation}
\qquad \qquad \qquad
0 < \epsilon \leq \lambda_T(S) \leq \chi 
\qquad \forall \, S \in [0,1]. 
\label{bounds_on_total_mobility}
\end{equation}
\end{myass}
Equation (\ref{continuous_pressure_equation}), referred to as the
pressure equation, is parabolic when compressibility is taken
into account \citep{trangenstein1989mathematicalb,trangenstein1989mathematical}, and elliptic in the incompressible case that we
will exclusively consider. The flow problem governs the evolution
of the pressure variables as a function of space and time.
It is coupled to the highly nonlinear transport of species,
derived next. Equation (\ref{total_velocity}) is used
to express the pressure gradient as a function of $\mathbf{u}_T$
and the weights $\mathbf{g}_{\ell}$ $(\ell \in \{ \textit{nw}, w \})$,
in order to eliminate the pressure variable from (\ref{Darcy_s_law}).
We obtain
\begin{equation}
\mathbf{u}_{\ell}(\mathbf{x}, p, S) = \frac{\lambda_{\ell}}{\lambda_T}  \mathbf{u}_T(\mathbf{x}, p, S) 
+ k \sum_m \frac{\lambda_m \lambda_{\ell}}{\lambda_T} 
(\mathbf{g}_{\ell} - \mathbf{g}_m). \label{pout_pout}
\end{equation}
This allows us to rewrite the system of PDEs in the following
fractional flow formulation which governs the hyperbolic transport 
of species as
\begin{equation}
\phi \frac{\partial S_{\ell}}{\partial t} + \nabla \cdot \bigg( \frac{\lambda_{\ell}}{\lambda_T}  \mathbf{u}_T(\mathbf{x}, p, S) 
+ k \sum_m \frac{\lambda_m \lambda_{\ell}}{\lambda_T} 
(\mathbf{g}_{\ell} - \mathbf{g}_m) \bigg) = q_{\ell}
\qquad \forall \, \ell \in \{ \textit{nw}, w \}. 
\label{governing_pdes_plusbatstoiplus}
\end{equation}
The system composed of (\ref{continuous_pressure_equation}) and
(\ref{governing_pdes_plusbatstoiplus}) contains a redundant equation. 
Although it is possible to solve both equations in
(\ref{governing_pdes_plusbatstoiplus}) by relaxing the saturation 
constraint, we enforce the saturation constraint and solve
(\ref{continuous_pressure_equation}) along with
(\ref{governing_pdes_plusbatstoiplus}) 
only for $\ell = w$. We highlight that in the two equivalent
systems of governing PDEs of (\ref{governing_pdes_batstoiplus}), or 
(\ref{continuous_pressure_equation}) and
(\ref{governing_pdes_plusbatstoiplus}), the flow is tightly coupled
to the highly nonlinear transport of species through the mobility
terms. Next, we employ a fully implicit finite-volume scheme to
discretize the system of governing PDEs, with an emphasis on the
truly multidimensional computation of the numerical flux aiming at
reducing grid orientation effects. As explained below, the proposed
methodology is based on the fractional flow formulation and builds
on the Implicit Hybrid Upwinding approach \citep{lee2015hybrid} to
construct the flux approximation.

\section{\label{section_numerical_scheme}Numerical scheme}

\subsection{\label{subsection_interaction_regions}Integration region framework}

We consider a two-dimensional Cartesian grid consisting 
of $M$ control volumes discretizing the domain $\Omega$. 
In the multidimensional scheme proposed in this paper, 
the flux is computed with an adaptive stencil containing 
at least two upwind control volumes in
the neighborhood of the interface under consideration.
The weights given to the upwind
volumes are chosen to improve accuracy by accounting
for characteristic information while also honoring monotonicity.
To define this stencil, we adapt the approach previously used in 
\cite{kozdon2011multidimensional,keilegavlen2012multidimensional} 
and use a dual grid made of the union of interaction regions 
as illustrated in Fig.~\ref{fig:interaction_regions}. 
Control-volume finite element methods
\citep{schneider1986skewed} and multi-point
flux finite volume methods
\citep{aavatsmark1998control,edwards1998finite,aavatsmark2002introduction}
utilize similar definitions of interaction regions and dual grids.

Each control volume in the primal grid is part of four interaction 
regions. As in \cite{kozdon2011multidimensional}, these interaction 
regions are locally labeled counterclockwise using the superscript
$(m)$, 
with $m \in \{1, \dots, 4\}$, starting from the bottom left corner 
of the control volume. In the dual grid, the four vertices of an 
interior interaction region correspond to the centers of the four 
control volumes intersected by this interaction region. These four 
vertices are locally labeled counterclockwise with the 
subscript $k \in \{1, \dots, 4\}$, starting from the bottom 
left vertex of the interaction region. Because we are developing 
first-order schemes, the primary variables are located at the
centers of the control volumes in the primal grid, or equivalently,
at the vertices of the interaction regions in the dual grid.
We denote by $p^{(m)}_k$ (respectively, $S^{(m)}_k$) 
the pressure (respectively, the wetting-phase saturation) located 
at the $k^{\text{th}}$ vertex of interaction region $(m)$.
We point out that $p^{(1)}_3$, $p^{(2)}_4$, $p^{(3)}_1$, and
$p^{(4)}_2$ refer to the same degree of freedom viewed from
different interaction regions.

The flux computation is performed at the four control volume 
segments contained in each interaction region and referred 
to as half interfaces. We write the quantities 
computed at these half interfaces with an overline to distinguish
them from the control-volume centered quantities. The four 
half interfaces are also locally numbered counterclockwise
with the subscript $k + 1/2 \in \{1+1/2, \dots, 4+1/2\}$.
In interaction region $(m)$, we denote by
$\overline{F}^{(m)}_{\ell,k+1/2}$ 
the approximation of the integral of the velocity of phase $\ell$
at the half interface $\Gamma^{(m)}_{k+1/2}$ between vertex $k$ and 
$k+1$, defined as
\begin{eqnarray}
\overline{F}^{\, (m)}_{\ell,k+1/2}
& \approx & \int_{\Gamma^{(m)}_{k+1/2}} \boldsymbol{u}_{\ell}(\boldsymbol{x}, p, S) \cdot \boldsymbol{n}^{(m)}_{k+1/2} \, \text{d} \Gamma^{(m)}_{k+1/2}, \label{discrete_flux} 
\end{eqnarray}
where $\boldsymbol{n}^{(m)}_{k+1/2}$ is the outward normal to
$\Gamma^{(m)}_{k+1/2}$. In our convention,
$\overline{F}^{(m)}_{\ell,k+1/2} \geq 0$ means 
that the positive flux for phase $\ell$ at half interface
$\Gamma^{(m)}_{k+1/2}$ is from vertex $k$ to vertex $k+1$ in
interaction region $(m)$. Similarly, considering the total
velocity at half interface $\Gamma^{(m)}_{k+1/2}$, which satisfies 
\begin{equation}
\overline{u}^{(m)}_{T,k+1/2} = \sum_{\ell} \overline{F}^{(m)}_{\ell,k+1/2},
\label{discrete_total_velocity}  
\end{equation}
the inequality $\overline{u}^{(m)}_{T,k+1/2} \geq 0$ means that 
the positive total flux at half interface $\Gamma^{(m)}_{k+1/2}$
is from vertex $k$ to $k+1$.  Note that the subscript $k$ is
defined cyclically on $\{1, \dots, 4\}$, and that, with a slight
abuse of notation, we write $k+1$ instead of $mod(k,4)+1$ in the
previous discussion. With this convention, $k+1$ indicates a
counterclockwise step around an interaction region and $k-1$
indicates a clockwise step.

\begin{figure}[ht]
\centering
\scalebox{0.95}{
\tikzstyle{int}=[draw, minimum size=2em]
\tikzstyle{init} = [pin edge={to-,thick,black}]

\begin{tikzpicture}[node distance=3cm,auto,>=latex']

  \path (3,3) node (a) {};
  \path (6,6) node (b) {};
  \path [draw=black,fill=berkeley_lab_blue!15!white,thick] (a) rectangle (b);

  \path (0,0) node (a) {};
  \path (3,6) node (b) {};
  \path [draw=berkeley_lab_blue,very thick] (a) rectangle (b); 
  \path (3,0) node (a) {};
  \path (6,3) node (b) {};
  \path [draw=berkeley_lab_blue,very thick] (a) rectangle (b); 
  \path (6,0) node (a) {};
  \path (9,3) node (b) {};
  \path [draw=berkeley_lab_blue,very thick] (a) rectangle (b); 

  \path (0,3) node (a) {};
  \path (3,6) node (b) {};
  \path [draw=berkeley_lab_blue,very thick] (a) rectangle (b); 
  \path (3,3) node (a) {};
  \path (6,6) node (b) {};
  \path [draw=berkeley_lab_blue,very thick] (a) rectangle (b); 
  \path (6,3) node (a) {};
  \path (9,6) node (b) {};
  \path [draw=berkeley_lab_blue,very thick] (a) rectangle (b); 

  \path (0,6) node (a) {};
  \path (3,9) node (b) {};
  \path [draw=berkeley_lab_blue,very thick] (a) rectangle (b); 
  \path (3,6) node (a) {};
  \path (6,9) node (b) {};
  \path [draw=berkeley_lab_blue,very thick] (a) rectangle (b); 
  \path (6,6) node (a) {};
  \path (9,9) node (b) {};
  \path [draw=berkeley_lab_blue,very thick] (a) rectangle (b); 

  \node (c) at (1.5,4.5) {\Large $\bullet$}; 
  \node (c) at (7.5,1.5) {\Large $\bullet$}; 
  \path (1.5,1.5) node (a) {\Large $\bullet$};
  \path (4.5,4.5) node (b) {\Large $\bullet$};
  \path [draw=black,dashed,very thick] (a) rectangle (b); 
  \path (4.5,1.5) node (a) {\Large $\bullet$};
  \path (7.5,4.5) node (b) {\Large $\bullet$};
  \path [draw=black,dashed,very thick] (a) rectangle (b); 

  \node (c) at (1.5,1.5) {\Large $\bullet$};
  \node (c) at (1.5,4.5) {\Large $\bullet$};
  \node (c) at (1.5,7.5) {\Large $\bullet$};
  \node (c) at (4.5,7.5) {\Large $\bullet$};
  \node (c) at (7.5,7.5) {\Large $\bullet$};
  \path (4.5,4.5) node (a) {};
  \path (4.5,7.7) node (b) {};
  \path [draw=black,dashed,very thick] (a) -- (b);
  \path (1.5,4.5) node (a) {\Large $\bullet$};
  \path (1.5,7.7) node (b) {};
  \path [draw=black,dashed,very thick] (a) -- (b);
  \path (7.5,4.5) node (a) {\Large $\bullet$};
  \path (7.5,7.7) node (b) {};
  \path [draw=black,dashed,very thick] (a) -- (b); 
  \path (1.5,7.5) node (a) {};
  \path (7.7,7.5) node (b) {};
  \path [draw=black,dashed,very thick] (a) -- (b);

  \node (a) at (3.75,2.8725) {}; 
  \node (b) at (3.75,3.75) {}; 
  \node (c) at (3.75,3.9) {$\boldsymbol{n}^{(1)}_{2 + 1/2}$};
  \path[->,draw=black,very thick] (a) edge node {} (b);  

  \node (a) at (3.75,5.8725) {}; 
  \node (b) at (3.75,6.75) {}; 
  \node (c) at (3.75,6.9) {$\boldsymbol{n}^{(4)}_{2 + 1/2}$};
  \path[->,draw=black,very thick] (a) edge node {} (b);  

  \node (a) at (5.25,3.1175) {}; 
  \node (b) at (5.25,2.25) {}; 
  \node (c) at (5.085,2.4) {$\boldsymbol{n}^{(2)}_{4 + 1/2}$};
  \path[->,draw=black,very thick] (a) edge node {} (b);  

  \node (a) at (5.25,6.1175) {}; 
  \node (b) at (5.25,5.25) {}; 
  \node (c) at (5.085,5.4) {$\boldsymbol{n}^{(3)}_{4 + 1/2}$};
  \path[->,draw=black,very thick] (a) edge node {} (b);

  \node (a) at (3.155,3.75) {}; 
  \node (b) at (2.25,3.75) {}; 
  \node (c) at (2.3,3.65) {$\boldsymbol{n}^{(1)}_{3 + 1/2}$};
  \path[->,draw=black,very thick] (a) edge node {} (b);  

  \node (a) at (2.8725,5.25) {}; 
  \node (b) at (3.75,5.25) {}; 
  \node (c) at (3.675,5.565) {$\boldsymbol{n}^{(4)}_{1 + 1/2}$};
  \path[->,draw=black,very thick] (a) edge node {} (b);  

  \node (a) at (6.155,3.75) {}; 
  \node (b) at (5.25,3.75) {}; 
  \node (c) at (5.3,3.65) {$\boldsymbol{n}^{(2)}_{3 + 1/2}$};
  \path[->,draw=black,very thick] (a) edge node {} (b);  

  \node (a) at (5.8725,5.25) {}; 
  \node (b) at (6.75,5.25) {}; 
  \node (c) at (6.675,5.565) {$\boldsymbol{n}^{(3)}_{1 + 1/2}$};
  \path[->,draw=black,very thick] (a) edge node {} (b);  

  \node (a) at (2,7) {\Large $(4)$};
  \node (a) at (2,2) {\Large $(1)$};
  \node (a) at (7,2) {\Large $(2)$};
  \node (a) at (7,7) {\Large $(3)$}; 

  \path (4.5,4.5) node (a) {\textcolor{berkeley_lab_blue}{\Large $\bullet$}};


  \node (a) at (13.5,5) {}; 
  \node (b) at (11,2.5) {}; 
  \path [fill=berkeley_lab_blue!15!white,very thick] (a) rectangle (b); 

  \node (c) at (16,0) {\Large $\bullet$}; 
  \node (c) at (11,5) {\Large \textcolor{berkeley_lab_blue}{$\bullet$}}; 
  \path (11,0) node (a) {\Large $\bullet$};
  \path (16,5) node (b) {\Large $\bullet$};
  \path [draw=black,dashed,very thick] (a) rectangle (b); 

  \node (a) at (13.5,-0.15) {}; 
  \node (b) at (13.5,5.15) {}; 
  \path[draw=berkeley_lab_blue,very thick] (a) edge node {} (b);

  \node (a) at (10.85,2.5) {}; 
  \node (b) at (16.15,2.5) {}; 
  \path[draw=berkeley_lab_blue,very thick] (a) edge node {} (b);

  \node (c) at (11.4,-0.5) {\Large $p^{}_{1}, \, S^{}_{1}$};
  \node (c) at (11.4,5.5) {\Large $p^{}_{4}, \, S^{}_{4}$};
  \node (c) at (15.9,5.5) {\Large $p^{}_{3}, \, S^{}_{3}$};
  \node (c) at (15.9,-0.5) {\Large $p^{}_{2}, \, S^{}_{2}$};

  \node (a) at (13.625,3.75) {}; 
  \node (b) at (12.5 ,3.75) {}; 
  \node (c) at (12.65,4.1) {\Large $\boldsymbol{n}^{}_{3 + 1/2}$};
  \path[->,draw=black,very thick] (a) edge node {} (b);  

  \node (a) at (12.25,2.625) {}; 
  \node (b) at (12.25,1.5) {}; 
  \node (c) at (12.25,1.525) {\Large $\boldsymbol{n}^{}_{4 + 1/2}$};
  \path[->,draw=black,very thick] (a) edge node {} (b);  

  \node (a) at (13.375,1.25) {}; 
  \node (b) at (14.5 ,1.25) {}; 
  \node (c) at (14.35,1.6) {\Large $\boldsymbol{n}^{}_{1 + 1/2}$};
  \path[->,draw=black,very thick] (a) edge node {} (b);  

  \node (a) at (14.75,2.375) {}; 
  \node (b) at (14.75,3.5) {}; 
  \node (c) at (14.75,3.65) {\Large $\boldsymbol{n}^{}_{2 + 1/2}$};
  \path[->,draw=black,very thick] (a) edge node {} (b);  

  \node (a) at (13.5,7) {\Large Local indexing in};
  \node (b) at (13.6,6.45) {\Large interaction region (2)};

\end{tikzpicture}
}
\caption{\label{fig:interaction_regions}Primal grid (solid blue
  line) and dual grid (dashed black line). The schematic on the
  left shows the four interaction regions intersecting the
  control volume in light
  blue. The schematic on the right focuses on 
  interaction region (2) located at the bottom right of the control 
  volume in light blue. In this schematic, we omitted the superscript 
  (2) denoting the interaction region index. The dots show the
  location of the control volume centers, or,
  equivalently, the vertices of the interaction regions. The arrows
  show the outward normals to the half interfaces. There are four
  half interfaces associated with each interaction region and, as
  a result, there are eight half interfaces associated with a primal
  control volume.}
\end{figure}
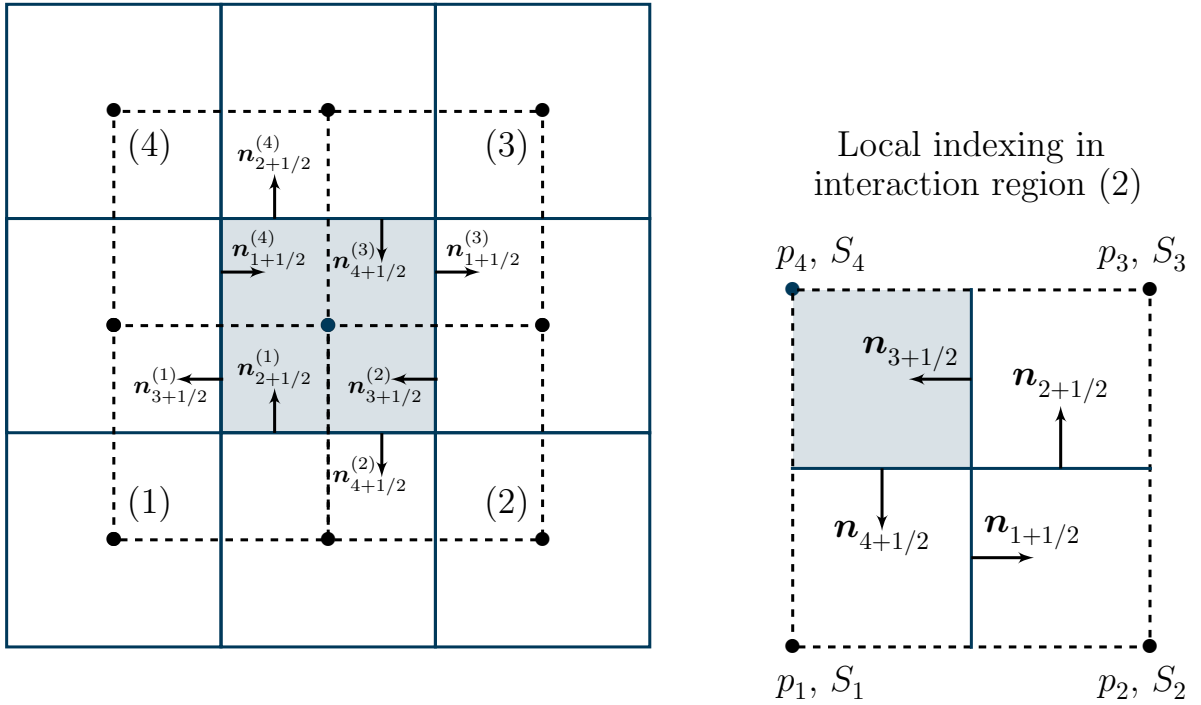

\subsection{\label{subsection_implicit_finite_volume_scheme}Implicit finite-volume scheme}

We are ready to write the discrete version of the system of
governing equations 
(\ref{continuous_pressure_equation})-(\ref{governing_pdes_plusbatstoiplus}). 
We consider the control volume in blue in
Fig.~\ref{fig:interaction_regions}. Viewed from interaction region
(2), the pressure and saturation of phase $\ell$ at the center
of this control
volume are on the fourth vertex and are denoted by $p^{(2)}_{\ell,4}$
and $S^{(2)}_{\ell,4}$. The
statement of discrete mass conservation for phase
$\ell \in \{1,2\}$ in this control volume reads
\begin{equation}
V \phi \displaystyle \frac{S^{(2)}_{\ell,4}-(S^{(2)}_{\ell,4})^n}{\Delta t^n} 
     + \sum_{k} \bigg( \overline{F}^{\, (k-2)}_{\ell,k+1/2} - \overline{F}^{\, (k-2)}_{\ell,k-1/2} \bigg) 
     = V q_{\ell},
\label{discrete_transport_equation}
\end{equation}
where $\Delta t^n = t^{n+1} - t^n$ is the time step and $V$ is
the volume of the control volume. We assume fixed positive
injection terms balanced by variable (i.e., pressure-controlled)
negative production terms. The flux term can be 
split to isolate the contribution from each interaction region as
\begin{eqnarray}
\sum_{k} \bigg( \overline{F}^{\, (k-2)}_{\ell,k+1/2} - \overline{F}^{\, (k-2)}_{\ell,k-1/2} \bigg) 
& = & 
\underbrace{\overline{F}^{\, (1)}_{\ell,3+1/2} - \overline{F}^{\, (1)}_{\ell,2+1/2}}_{\substack{\text{interaction} \\ \text{region (1)}}}
\, + \, 
\underbrace{\overline{F}^{\, (2)}_{\ell,4+1/2} - \overline{F}^{\, (2)}_{\ell,3+1/2}}_{\substack{\text{interaction} \\ \text{region (2)}}} \nonumber \\& + &
\underbrace{\overline{F}^{\, (3)}_{\ell,1+1/2} - \overline{F}^{\, (3)}_{\ell,4+1/2}}_{\substack{\text{interaction} \\ \text{region (3)}}}
\, + \,
\underbrace{\overline{F}^{\, (4)}_{\ell,2+1/2} - \overline{F}^{\, (4)}_{\ell,1+1/2}}_{\substack{\text{interaction} \\ \text{region (4)}}}.
\label{flux_term_decomposition}
\end{eqnarray}
Summing (\ref{discrete_transport_equation}) over all phases
and using (\ref{discrete_total_velocity}) to form the total
velocity results in the discrete pressure equation:
\begin{equation}
\sum_{k} \bigg( \overline{u}^{\, (k-2)}_{T,k+1/2} - \overline{u}^{\, (k-2)}_{T,k-1/2} \bigg) 
     = V q_{T},
\label{discrete_pressure_equation}
\end{equation}
where we have used the saturation constraint
(\ref{saturation_constraint}) to cancel the derivatives of
saturation on the left-hand side. Finally, we underline the
fact that we use a fully implicit discretization, with all
the quantities evaluated at the most recent time and iteration
level, except the saturation at the previous time in the accumulation
term.

\subsection{\label{subsection_flux_discretization}Flux approximation}

The flux defined in (\ref{discrete_flux}) can be decomposed
into two terms. The first term is a static transmissibility
coefficient depending on the rock properties and the grid geometry
in a stencil involving multiple control volumes in the
neighborhood of the interface. This term is independent 
of the saturations and can be computed in a preprocessing step. 
In this work, we limit the scope of the numerical study to
Cartesian grids with scalar permeability and therefore exclusively
consider a Two-Point
Flux Approximation (TPFA) to compute this transmissibility term.
Multi-Point Flux Approximations
\citep[MPFA,][]{aavatsmark1998control,edwards1998finite,aavatsmark2002introduction,zhou2011automatic}
are required for more general cases as previously shown by
\cite{keilegavlen2012multidimensional,lamine2013higher,souza2018higher}. Considering half interface
$\Gamma^{(m)}_{k+1/2}$ between vertices $k$ and $k+1$, we define
the rock- and geometric transmissibility as
\begin{equation}
\overline{T}^{(m)}_{k+1/2} :=
\frac{2 l_{k+1/2}}{\displaystyle d_{k+1/2}
    \bigg( \frac{1}{k_{k}} + \frac{1}{k_{k+1}} \bigg)}.
\label{rock_and_geometric_transmissibility}
\end{equation}
In (\ref{rock_and_geometric_transmissibility}), $l_{k+1/2}$
is the length of half interface $\Gamma^{(m)}_{k+1/2}$,
$d_{k+1/2}$ is the distance between vertices $k$ and $k+1$, and
$k_k$ (respectively, $k_{k+1}$) denotes the absolute permeability
in the control volume centered at vertex $k$ (respectively, vertex
$k+1$).
This choice implies that in two consecutive interaction regions
-- e.g., (1) and (2) in Fig.~\ref{fig:interaction_regions} --
the two half interfaces splitting a full control-volume interface
-- $\Gamma^{(1)}_{2+1/2}$
and $\Gamma^{(2)}_{4+1/2}$ -- have the same transmissibility
-- that is,
$\overline{T}^{(1)}_{2+1/2} = \overline{T}^{(2)}_{4+1/2}$. In
the ordering of Fig.~\ref{fig:interaction_regions}, we have
\begin{equation}
\overline{T}^{(k+1)}_{k-3/2} = \overline{T}^{(k+2)}_{k+1/2}.
\end{equation}

The second term is a highly nonlinear, non-convex function of the
saturations whose computation has a significant impact on the
robustness and the accuracy of the numerical scheme.
Even though high-resolution schemes have
been developed for multiphase flow in porous media
\citep{blunt1992implicit,mallison2005high,lamine2010higher,mykkeltvedt2017fully}, we
focus on commonly used monotone first-order upwinding
schemes and will consider higher-resolution extensions
in future work. Robust two-point upwinding schemes such
as Phase-Potential Upwinding (PPU) and Implicit Hybrid
Upwinding (IHU) are relatively simple and inexpensive but
suffer from a strong sensitivity to the orientation of the
computational grid for flows involving adverse mobility ratios.
This leads to inaccurate predictions when the flow is not aligned
with the grid, which is often the case in practical simulations. 

In this work, we address this issue with a multidimensional
evaluation of the saturation-dependent coefficients based on
a larger stencil that adapts to the flow direction.
Our approach extends the original IHU approach to improve the
accuracy of the fully implicit numerical scheme while retaining 
the robustness of the IHU scheme in the presence of strong buoyancy. 
The scheme is constructed to reduce the sensitivity of the results 
to the orientation of the computational grid by introducing a local
coupling between the fluxes in each interaction region. A half 
flux can therefore be a function of all the saturations in 
the interaction region, leading to a stencil with a maximum of 
nine points in the discretization of the mobilities. To guarantee
that 
the resulting scheme is well behaved, we require that for a
fixed total velocity field, the fluxes satisfy the following
monotonicity constraint in interaction region $(m)$: 
\begin{equation}
\frac{\partial (\overline{F}^{\, (m)}_{\ell,k+1/2} - \overline{F}^{\, (m)}_{\ell,k-1/2}) }{\partial S^{(m)}_{\ell,j \neq k}} \leq 0 \qquad
\forall \, k \in \{1, \dots, 4\}, \quad \forall \, \ell \in
\{\textit{nw}, w\}.
\label{monotonicity_conditiona}
\end{equation}
It follows from a mass conservation statement written at two
consecutive half interfaces $\Gamma^{(m)}_{k-1/2}$ and
$\Gamma^{(m)}_{k+1/2}$
that whenever the monotonicity constraint holds we also have
\begin{equation}
\frac{\partial (\overline{F}^{\, (m)}_{\ell,k+1/2} - \overline{F}^{\, (m)}_{\ell,k-1/2}) }{\partial S^{(m)}_{\ell,k}} \geq 0 \quad \forall \, \ell \in \{\textit{nw}, w\}.
\label{corollary_monotonicity_conditiona}
\end{equation}
A similar monotonicity condition is used in \cite{kozdon2011multidimensional,keilegavlen2012multidimensional}. 
In Section \ref{section_mathematical_properties}, we use this property to show that the saturation solution of the fully 
implicit scheme remains between physical 
bounds, 0 and 1.

\section{\label{section_multidimensional_implicit_hybrid_upwinding}Multidimensional Implicit Hybrid Upwinding}

In the following sections, we consider an interaction region $(m)$
and we detail the discretization of the interfacial quantities
in the four half fluxes. To construct the approximation, we
follow the Implicit Hybrid Upwinding (IHU) approach 
\citep{eymard1989hybrid,lee2015hybrid,lee2016c,lee2018hybrid,hamon2016analysis,hamon2016implicit,hamon2016capillary,moncorge2018consistent,xie2019unstructured}
and split the numerical flux at half interface $\Gamma_{k+1/2}$
between vertices $k$ and $k+1$ into a viscous part,
$\overline{V}_{\ell,k+1/2}$, and a buoyancy part, $\overline{G}_{\ell,k+1/2}$,
as follows 
\begin{eqnarray}
\overline{F}_{\ell,k+1/2} &=& \overline{V}_{\ell,k+1/2} + \overline{G}_{\ell,k+1/2}  \\
& \approx & \int_{\Gamma^{}_{k+1/2}} \frac{\lambda_{\ell}}{\lambda_T}  \mathbf{u}_T(\mathbf{x}, p, S) \cdot \boldsymbol{n}^{}_{k+1/2} \, \text{d} \Gamma^{}_{k+1/2} \nonumber \\
& + & \sum_m \int_{\Gamma^{}_{k+1/2}} k  \frac{\lambda_m \lambda_{\ell}}{\lambda_T} (\mathbf{g}_{\ell} - \mathbf{g}_m) \cdot \boldsymbol{n}^{}_{k+1/2} \, \text{d} \Gamma^{}_{k+1/2},
\label{viscous_buoyancy_split}
\end{eqnarray}
where we omitted the superscript denoting the interaction region.
The viscous and buoyancy parts are then evaluated separately,
based on physical considerations, to achieve a differentiable flux
when the total velocity field is fixed. The proposed discretization of
$\overline{V}_{\ell,k+1/2}$ and $\overline{G}_{\ell,k+1/2}$, explained next, 
attenuates grid orientation effects when the flow is not aligned
with the computational grid, but reduces to the original IHU when
the flow is aligned with the computational grid. We show in the
next sections that it satisfies the monotonicity property outlined
above, and that it preserves a robust nonlinear convergence behavior
even in the presence of strong buoyancy forces.

Multiple strategies have been used to reflect the local flow
pattern in the construction of multidimensional upwind 
schemes for porous media flow. In the IMPES 
and fully implicit multidimensional PPU schemes of  
\cite{kozdon2011multidimensional,kozdon2011batstoicmame,keilegavlen2012multidimensional}, 
the authors design a flux approximation based on
(\ref{governing_pdes_batstoiplus}) by directly computing
weighted averages of mobilities to account for the flow 
orientation in each interaction region. Instead, in the 
IMPES fractional-flow based schemes of
\cite{lamine2010higher,lamine2013higher,lamine2015multidimensional,edwards2011multi}, 
the flow orientation is exploited in the computation of
weighted averages of saturations -- which are then used to 
evaluate mobility ratios -- and of weighted averages of fluxes. 
Our methodology is closer to the latter. Given the 
structure of the Implicit Hybrid Upwinding flux, we employ
weighted averages of mobility ratios to obtain a fully implicit
multidimensional IHU satisfying the monotonicity property. 

\subsection{\label{subsection_viscous_term}Viscous term}

In the viscous term $\overline{V}_{\ell,k+1/2}$, the mobility ratio
at half interface $\Gamma_{k+1/2}$ is approximated based on the
sign of the discrete total velocities in the interaction region.
In this section, we assume that these total
velocities have been computed and are available. Their fully implicit
discretization will be reviewed in Section 
\ref{subsection_total_velocity_discretization}. 
We write the viscous flux at half interface $\Gamma_{k+1/2}$ as
\begin{equation}
\overline{V}_{\ell,k+1/2} := 
\overline{\chi}_{\ell,k+1/2} \overline{u}_{T,k+1/2}.
\label{viscous_half_numerical_flux}
\end{equation}
The interfacial quantity $\overline{\chi}_{\ell,k+1/2}$ approximates
the mobility ratio of the viscous term at half interface
$\Gamma_{k+1/2}$ (see Fig.~\ref{fig:viscous_term}) using a
weighted average of the mobility ratio
evaluated in the control volume and at the previous interface to
reflect the local flow pattern in the interaction region, that is,
\begin{equation}
\overline{\chi}_{\ell,k+1/2} := \left\{
\begin{array}{l l}
  (1-\overline{\omega}^V_{k+1/2}) \, \displaystyle \chi_{\ell,k}
  \, + \,
  \overline{\omega}^V_{k+1/2} \, \overline{\chi}_{\ell,k-1/2}
  & \text{if } \, \overline{u}_{T,k+1/2} > 0 \\[10pt]
  (1-\overline{\omega}^V_{k+1/2}) \, \displaystyle \chi_{\ell,k+1}
  \, + \,
  \overline{\omega}^V_{k+1/2} \, \overline{\chi}_{\ell,k+3/2}
  & \text{otherwise,} \\
\end{array} \right. 
\label{definition_chi}
\end{equation}
where $\chi_{\ell,k}$ denotes the mobility ratio evaluated
at vertex $k$:
\begin{equation}
\chi_{\ell,k} := \frac{\lambda_{\ell,k}}{\lambda_{T,k}} = \frac{\lambda_{\ell}(S_{\ell,k})}{\lambda_{T}(S_{\ell,k})}.
\label{vertex_based_mobility_ratio}
\end{equation}
The weighting coefficient $\overline{\omega}^V_{k+1/2}$ is a
function of the primary variables -- pressure and saturation --
in the interaction region and is constructed as the ratio of
the total velocities at two consecutive interfaces:
\begin{equation}
\overline{\omega}^V_{k+1/2} := \left\{
\begin{array}{l l}
\varphi \bigg( \max \big( 0, \displaystyle \frac{\overline{u}_{T,k-1/2}}{\overline{u}_{T,k+1/2}}\big) \bigg)  & \text{if} \, \, \overline{u}_{T,k+1/2} > 0 \\[10pt]
\varphi \bigg( \max \big( 0, \displaystyle \frac{\overline{u}_{T,k+3/2}}{\overline{u}_{T,k+1/2}}\big) \bigg)  & \text{if} \, \, \overline{u}_{T,k+1/2} < 0 \\[10pt]
0                 & \text{otherwise,}
\end{array} \right. 
\label{definition_omega_v}
\end{equation}
where $\varphi: \mathbb{R}^+ \mapsto [0,1]$ is a limiter function
used to guarantee that $\overline{\omega}^V_{k+1/2} \in [0,1]$
and enforce monotonicity with respect to saturation. With this
definition, it follows that the multidimensional viscous term
(\ref{viscous_half_numerical_flux}) reduces 
to the viscous term in the original one-dimensional IHU scheme
whenever $\varphi \equiv 0$. An illustration of
the viscous upwinding is given in Fig.~\ref{fig:viscous_term}
where $\overline{u}_{T,1+1/2} > 0$, $\overline{u}_{T,2+1/2} > 0$,
$\overline{u}_{T,3+1/2} < 0$, and $\overline{u}_{T,4+1/2}$.

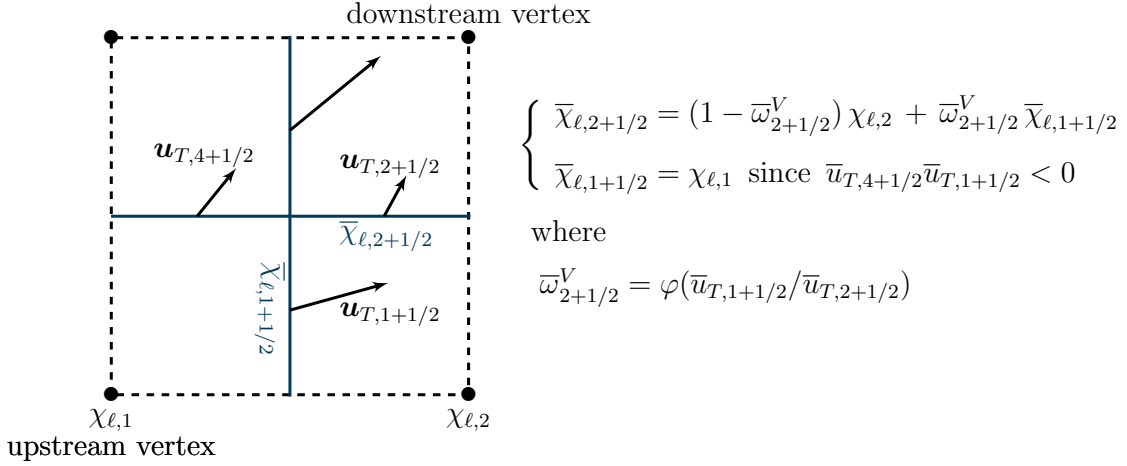
\begin{figure}[ht]
\centering
\scalebox{0.95}{
\tikzstyle{int}=[draw, minimum size=2em]
\tikzstyle{init} = [pin edge={to-,thick,black}]

\begin{tikzpicture}[node distance=3cm,auto,>=latex']


  \node (c) at (5,0) {\Large $\bullet$}; 
  \node (c) at (0,5) {\Large $\bullet$}; 
  \path (0,0) node (a) {\Large $\bullet$};
  \path (5,5) node (b) {\Large $\bullet$};
  \path [draw=black,dashed,very thick] (a) rectangle (b); 

  \node (a) at (2.5,-0.15) {}; 
  \node (b) at (2.5,5.15) {}; 
  \path[draw=berkeley_lab_blue,very thick] (a) edge node {} (b);

  \node (a) at (-0.1275,2.5) {}; 
  \node (b) at (5.15,2.5) {}; 
  \path[draw=berkeley_lab_blue,very thick] (a) edge node {} (b);

  \node (c) at (0,-0.35) {\large $\chi_{\ell,1}$};
  \node (c) at (0,-0.75) {\large upstream vertex};
  \node (c) at (5,-0.35) {\large $\chi_{\ell,2}$};
  \node (c) at (0,-0.75) {\large upstream vertex};
  \node (c) at (5,5.35) {\large downstream vertex};

  \node (c) at (3.85,2.225) {\large \textcolor{berkeley_lab_blue}{$\overline{\chi}_{\ell,2+1/2}$}};
  \node[rotate=-90] (c) at (2.225,1.2) {\large \textcolor{berkeley_lab_blue}{$\overline{\chi}_{\ell,1+1/2}$}};

  \node (a) at (2.375,3.6) {}; 
  \node (b) at (3.9 ,4.85) {}; 
  \path[->,draw=black,very thick] (a) edge node {} (b);  

  \node (a) at (1.1,2.375) {}; 
  \node (b) at (1.85,3.3) {}; 
  \node (c) at (1.3,3.35) {\large $\boldsymbol{u}_{T,4 + 1/2}$};
  \path[->,draw=black,very thick] (a) edge node {} (b);  

  \node (a) at (2.375,1.15) {}; 
  \node (b) at (4 ,1.6) {};
  \node (c) at (3.9,1.125) {\large $\boldsymbol{u}_{T,1 + 1/2}$};
  \path[->,draw=black,very thick] (a) edge node {} (b);  

  \node (a) at (3.75,2.375) {}; 
  \node (b) at (4.2,3.2) {};
  \node (c) at (3.9,3.2) {\large $\boldsymbol{u}_{T,2 + 1/2}$};
  \path[->,draw=black,very thick] (a) edge node {} (b);  

  \node (a) at (10,3.5) {\large $\left\{ \begin{array}{l l} \overline{\chi}_{\ell,2+1/2} = (1-\overline{\omega}^V_{2+1/2}) \, \displaystyle \chi_{\ell,2} \, + \, \overline{\omega}^V_{2+1/2} \, \overline{\chi}_{\ell,1+1/2} \\[10pt] \overline{\chi}_{\ell,1+1/2} =  \chi_{\ell,1} \, \, \, \text{since} \, \, \, \overline{u}_{T,4+1/2} \overline{u}_{T,1+1/2} < 0 \end{array} \right.$};
  \node (b) at (6.4,2.3) {\large where};
    \node (c) at (8.6,1.5) {\large $\overline{\omega}^V_{2+1/2} = \varphi( \overline{u}_{T,1+1/2} / \overline{u}_{T,2+1/2} )$};

\end{tikzpicture}
}
\caption{\label{fig:viscous_term}Viscous upwinding 
for half interfaces $\Gamma_{1+1/2}$ and $\Gamma_{2+1/2}$ in an 
interaction region. The arrows show the orientation of the total 
velocity at the half interfaces. The interfacial mobility, 
$\overline{\chi}_{\ell,k+1/2}$, and the vertex-based mobility ratio, 
$\chi_{\ell,k}$, are defined in (\ref{definition_chi}) and (\ref{vertex_based_mobility_ratio}), respectively.
}
\end{figure}

Different limiters have been
proposed in previous work, such as the Tight Multi-D Upstream 
(TMU) limiter \citep{schneider1986skewed,velasco2007quadrilateral}
defined by $\varphi^{\textit{TMU}}(r) = \min(1,r)$
and the Smooth Multi-D Upstream (SMU) limiter
\citep{velasco2007quadrilateral,kozdon2009robust} computed with
$\varphi^{\textit{SMU}}(r) = r / (1+r)$. For a constant (uniform)
flow field, the SMU limiter corresponds to the upwind scheme 
of \cite{koren1991low} and the TMU limiter corresponds to the
narrow scheme of \cite{roe1992optimum}. In the absence of buoyancy,
SMU aligns the numerical diffusion tensor with the flow direction
and TMU minimizes the transverse numerical diffusion
\citep{kozdon2009robust}. 
In numerical tests not included here for brevity we observed that
SMU does not sufficiently reduce transverse diffusion when combined
with MultiD-IHU. But, we also noted that TMU introduces a
discontinuity in the derivatives of the weight
$\overline{\omega}^V_{k+1/2}$ that undermines the nonlinear
convergence behavior of the MultiD-IHU scheme. Therefore, we
propose a limiter that reduces transverse diffusion almost as
well as TMU while preserving a robust nonlinear behavior.
This variant of the SMU limiter is defined as
\begin{equation}
\varphi^{\textit{SMU4}}(r) := 
\frac{r^4 + r^3 + r^2 + r}{r^4 + r^3 + r^2 + r + 1}.
\label{limiter}
\end{equation}
This limiter, referred to as fourth-order SMU (SMU4), is shown
in Fig. \ref{fig:limiters}. In the MultiD-IHU scheme, we use
the SMU4 limiter, that is, we set
$\varphi \equiv \varphi^{\textit{SMU4}}$.
We note that the SMU4 limiter satisfies the symmetry property
\begin{equation}
\varphi\big(\frac{1}{r}\big) = \frac{\varphi(r)}{r}.
\label{symmetry_property}
\end{equation}
This property, also satisfied by the SMU and TMU limiters,
ensures that backward-facing and forward-facing 
fluxes are treated the same way. It is also used to prove
the monotonicity of the buoyancy term introduced in
Section~\ref{subsection_buoyancy_term}.

\begin{figure}[ht]
\centering
\begin{tikzpicture}
\node[anchor=south west,inner sep=0] at (0,0) {\includegraphics[scale=0.35]{./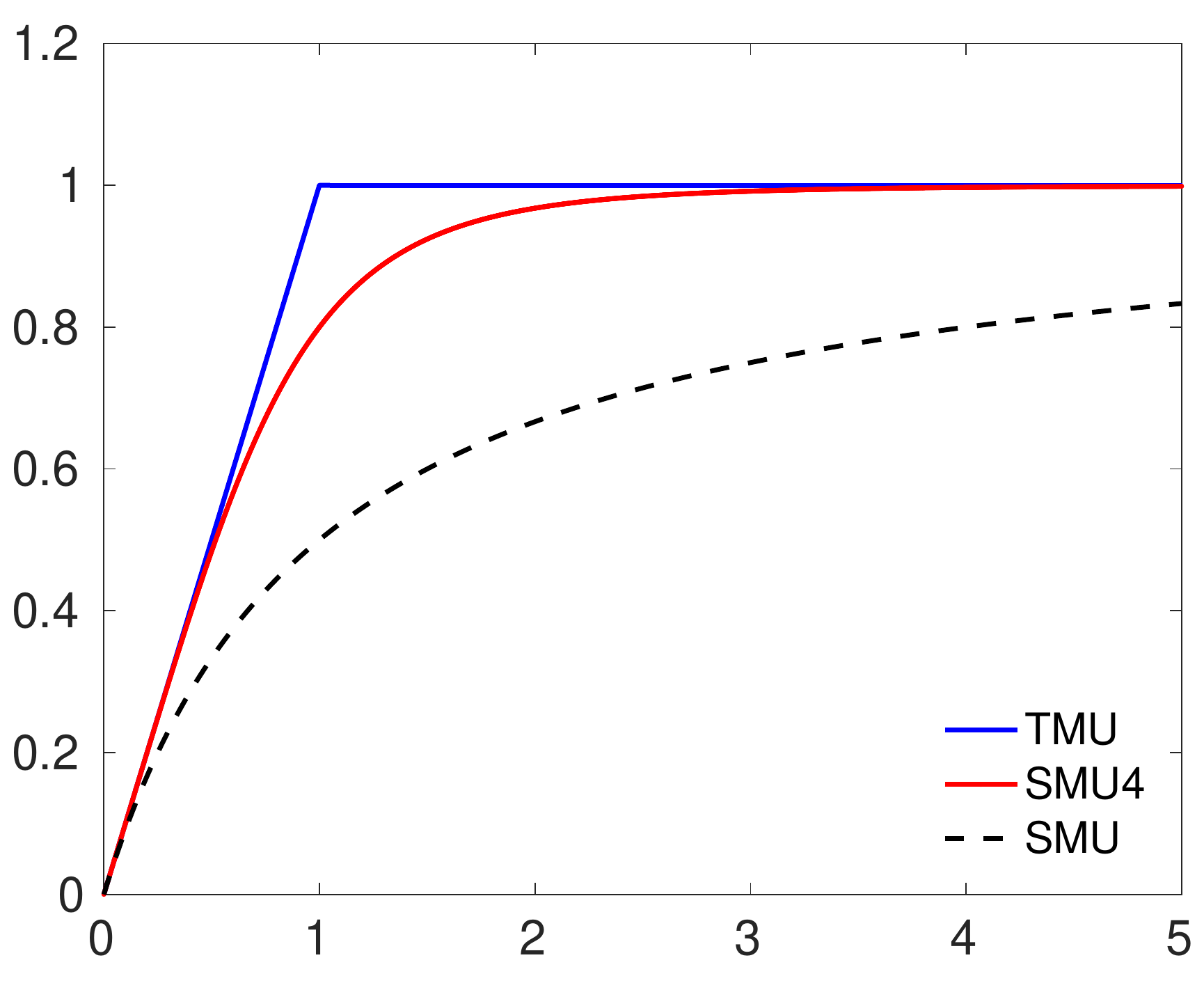}};
\node (ib_4) at (-0.32,2.75) {\small $\varphi(r)$};
\node (ib_4) at (3.3,0.1) {\small $r$};
\end{tikzpicture}
\caption{\label{fig:limiters}
Multidimensional limiters discussed in this work, including the Tight Multi-D Upstream (TMU) 
limiter of \cite{schneider1986skewed}, the Smooth Multi-D Upstream (SMU) limiter of 
\cite{velasco2007quadrilateral}, and the fourth-order SMU limiter of (\ref{limiter}).
}
\end{figure}

This methodology leads to a viscous flux that is a differentiable
function of the saturations in the interaction region for a fixed
total velocity field. We will show with numerical examples in 
Section~\ref{section_numerical_examples} that the scheme remains
well-behaved when the total velocity is a space- and time-dependent
function of the primary variables. Equations (\ref{definition_chi})
and (\ref{definition_omega_v}) introduce a local coupling of the
viscous terms in the interaction region that can be written in a
compact matrix form as 
\begin{equation}
\boldsymbol{A} \overline{\boldsymbol{\chi}}_{\ell} = \boldsymbol{B} \boldsymbol{\chi}_{\ell},
\label{definition_linear_system}
\end{equation}
where $\boldsymbol{A} = \{a_{ij}\} \in \mathbb{R}^{4 \times 4}$
has unit diagonal elements $(a_{kk} = 1)$ and one non-zero
off-diagonal term per row defined as
\begin{equation}
a_{k(k-1)} := \left\{
\begin{array}{l l}
- \overline{\omega}^V_{k+1/2}  & \text{if } \, \overline {u}_{T,k+1/2} \geq 0 \\[10pt]
0             & \text{otherwise.} \\
\end{array} \right. 
\qquad
a_{k(k+1)} := \left\{
\begin{array}{l l}
0  & \text{if } \, \overline {u}_{T,k+1/2} \geq 0 \\[10pt]
- \overline{\omega}^V_{k+1/2}  & \text{otherwise,} \\
\end{array} \right.
\label{definition_A}
\end{equation}
and $\boldsymbol{B} = \{b_{ij}\} \in \mathbb{R}^{4 \times 4}$ is
non-negative, such that
\begin{equation}
b_{kk} := \left\{
\begin{array}{l l}
1 - \overline{\omega}^V_{k+1/2}  & \text{if } \, \overline{u}_{T,k+1/2} \geq 0 \\[10pt]
0  & \text{otherwise.} \\
\end{array} \right. 
\qquad
b_{k(k+1)} := \left\{
\begin{array}{l l}
0  & \text{if } \, \overline {u}_{T,k+1/2} \geq 0 \\[10pt]
1 - \overline{\omega}^V_{k+1/2}  & \text{otherwise,} \\
\end{array} \right. 
\label{definition_B}
\end{equation}
In \ref{app_properties_of_matrix_A}, we use the definitions given
in (\ref{limiter}) to (\ref{definition_B}) to prove that
$\boldsymbol{A}$  is a non-singular M-matrix.
We also show
that the matrix
$\boldsymbol{C} := \boldsymbol{A}^{-1} \boldsymbol{B}$ is
non-negative and satisfies
\begin{equation}
  \sum_j c_{ij} = 1 \qquad i \, \in \, \{1, \dots, 4\}.
  \label{sum_c_coefs_equal_to_one}
\end{equation}
The interfacial mobility ratios are then determined by solving
the following local ($4 \times 4$) linear system
\begin{equation}
\overline{\boldsymbol{\chi}}_{\ell} = \boldsymbol{A}^{-1} \boldsymbol{B} \boldsymbol{\chi}_{\ell}.
\label{linear_solution_to_define_overline}  
\end{equation}
With this approach, each interfacial mobility ratio can depend on
more than two saturations in the interaction region. This leads to
an improved accuracy when the flow is not perpendicular to the
interface while preserving the monotonicity
of the viscous flux in the sense of (\ref{monotonicity_conditiona}),
as proven in \ref{app_viscous_flux_monotonicity}.
We mention
here that the matrices $\boldsymbol{A}$ and $\boldsymbol{B}$
are also used to obtain the derivatives of the mobility ratios
with respect to pressure and saturation as shown below,
\begin{equation}
\frac{\partial \overline{\boldsymbol{\chi}}_{\ell} }{\partial \tau_k} 
= \boldsymbol{A}^{-1} \big( 
- \frac{\partial \boldsymbol{A}}{\partial \tau_k} \overline{\boldsymbol{\chi}}_{\ell}
+ \frac{\partial \boldsymbol{B}}{\partial \tau_k} \boldsymbol{\chi}_{\ell} 
+ \boldsymbol{B} \frac{\partial \boldsymbol{\chi}_{\ell}}{\partial \tau_k} \big),
\end{equation}
where $\tau_k$ represents the pressure at vertex $k$,
$p_k$, or the wetting-phase saturation at vertex
$k$, $S_k$. The matrices $\partial \boldsymbol{A} / \partial \tau_k$
and $\partial \boldsymbol{B} / \partial \tau_k$ contain the
derivatives of the coefficients of $\boldsymbol{A}$ and
$\boldsymbol{B}$, respectively.
Finally, we highlight that at each half interface, we still
recover the discrete total velocity when we sum
of the viscous terms over the two phases. That is, for interface
$\Gamma_{k+1/2}$, we have
\begin{equation}
\sum_{\ell} \overline{V}_{\ell,k+1/2}
= \big( \sum_{\ell} \overline{\chi}_{\ell,k+1/2} \big)
\overline{u}_{T,k+1/2}
= \big( \sum_j c_{ij}  ( \sum_{\ell} \chi_{\ell,j} )
\big)
\overline{u}_{T,k+1/2}
= \overline{u}_{T,k+1/2},
\label{consistency_viscous_term}
\end{equation}
where the rightmost equality is obtained using
(\ref{vertex_based_mobility_ratio})
and (\ref{sum_c_coefs_equal_to_one}).

\subsection{\label{subsection_buoyancy_term}Buoyancy term}

In the buoyancy term $\overline{G}_{\ell,k+1/2}$, the discretization
of the mobility ratio at half interface $\Gamma_{k+1/2}$ is fully
based on (fixed) density differences as in the one-dimensional IHU
scheme. Our approach is based on the fact that the heaviest
(respectively, lightest) phase propagates downwards (respectively,
upwards). We write the buoyancy term as
\begin{equation}
\overline{G}_{\ell,k+1/2} := 
\overline{T}_{k+1/2} \sum_m \overline{\psi}_{\ell,m,k+1/2} (\rho_{\ell} - \rho_m) g \overline{\Delta z}_{k+1/2}, 
\label{buoyancy_half_numerical_flux}
\end{equation}
where $\overline{\Delta z}_{k+1/2} = z_{k+1} - z_{k}$, and where
$\overline{T}_{k+1/2}$
is defined in (\ref{rock_and_geometric_transmissibility}).
We omitted again the 
superscript denoting the interaction region in the right-hand side.
As in the viscous term, the discrete mobility ratio at half 
interface $\Gamma_{k+1/2}$ between vertices $k$ and $k+1$ is
obtained with a weighted averaging procedure that accounts for
the orientation of the buoyancy force with respect to the grid.
This procedure is described below for the case
$(\rho_{\ell} - \rho_m) \overline{\Delta z}_{k+1/2} > 0$:
\begin{equation}
\overline{\psi}_{\ell,m,k+1/2} 
:= \left\{
\begin{array}{l l} 
(1-\overline{\omega}^G_{k+1/2}) \displaystyle \frac{ \lambda_{\ell,k} \lambda_{m,k+1} }{ \lambda_{\ell,k} + \lambda_{m,k+1} } 
+ \overline{\omega}^G_{k+1/2}   \displaystyle \frac{ \lambda_{\ell,k} \lambda_{m,k+2} }{ \lambda_{\ell,k} + \lambda_{m,k+2} } &  \text{if} \, \overline{\Delta z}_{k+1/2}\overline{\Delta z}_{k+3/2} > 0 \\[10pt]
(1-\overline{\omega}^G_{k+1/2}) \displaystyle \frac{ \lambda_{\ell,k} \lambda_{m,k+1} }{ \lambda_{\ell,k} + \lambda_{m,k+1} } 
+ \overline{\omega}^G_{k+1/2}   \displaystyle \frac{ \lambda_{\ell,k-1} \lambda_{m,k+1} }{ \lambda_{\ell,k-1} + \lambda_{m,k+1} } & \text{if} \, \overline{\Delta z}_{k+1/2}\overline{\Delta z}_{k-1/2} > 0 \\[10pt]
\displaystyle \frac{ \lambda_{\ell,k} \lambda_{m,k+1} }{ \lambda_{\ell,k} + \lambda_{m,k+1} }  & \text{otherwise.}
\end{array} \right. \label{buoyancy_mobility_ratio}
\end{equation}
The following cases are illustrated in Fig.~\ref{fig:buoyancy_term}
for a light non-wetting phase and a heavy wetting phase.
First, we consider
updip half interface $\Gamma_{2+1/2}$
($\overline{\Delta z}_{2+1/2} < 0$). Since $\Gamma_{1+1/2}$ is
downdip and $\Gamma_{3+1/2}$ is updip, the non-wetting phase
mobility, $\lambda_{\textit{nw}}$,  is evaluated at the bottom
vertex 2 whereas the wetting-phase mobility $\lambda_w$ is evaluated
at vertex 3 and at the top vertex 4. Second, we consider updip
half interface $\Gamma_{3+1/2}$
($\overline{\Delta z}_{3+1/2} < 0$). Since $\Gamma_{4+1/2}$ is downdip
and $\Gamma_{2+1/2}$ is updip, $\lambda_{\textit{nw}}$ is evaluated
at vertex 3 and at the bottom vertex 2 while $\lambda_w$ is
evaluated at the top vertex 1.  

\begin{figure}[ht]
\centering
\scalebox{0.95}{
\tikzstyle{int}=[draw, minimum size=2em]
\tikzstyle{init} = [pin edge={to-,thick,black}]

\begin{tikzpicture}[node distance=3cm,auto,>=latex']


  \node (c) at (5,0) {\Large $\bullet$}; 
  \node (c) at (0,5) {\Large $\bullet$}; 
  \path (0,0) node (a) {\Large $\bullet$};
  \path (5,5) node (b) {\Large $\bullet$};
  \path [draw=black,dashed,very thick] (a) rectangle (b); 

  \node (a) at (2.5,-0.15) {}; 
  \node (b) at (2.5,5.15) {}; 
  \path[draw=berkeley_lab_blue,very thick] (a) edge node {} (b);

  \node (a) at (-0.1275,2.5) {}; 
  \node (b) at (5.15,2.5) {}; 
  \path[draw=berkeley_lab_blue,very thick] (a) edge node {} (b);

  \node (c) at (0,5.35) {\large $\lambda_{w,4}$};
  \node (c) at (5,5.35) {\large $\lambda_{w,3}$, $\lambda_{\textit{nw},3}$};
  \node (c) at (5,-0.35) {\large $\lambda_{\textit{nw},2}$};

  \node (c) at (0,5.75) {\large top vertex};
  \node (c) at (5,-0.75) {\large bottom vertex};

  \node (c) at (3.85,2.8) {\large \textcolor{berkeley_lab_blue}{$\overline{\psi}_{\textit{nw},w,2+1/2}$}};
  \node[rotate=90] (c) at (2.2,3.75) {\large \textcolor{berkeley_lab_blue}{$\overline{\psi}_{\textit{nw},w,3+1/2}$}};

  \node (a) at (2.375,3.95) {}; 
  \node (b) at (3.35 ,3.40) {}; 
  \node (c) at (3.425,3.9) {\large $\boldsymbol{g}_{3 + 1/2}$};
  \path[->,draw=black,very thick] (a) edge node {} (b);  

  \node (a) at (1.1,2.55) {}; 
  \node (b) at (2.075,2.0) {}; 
  \node (c) at (1.1,1.925) {\large $\boldsymbol{g}_{4 + 1/2}$};
  \path[->,draw=black,very thick] (a) edge node {} (b);  

  \node (a) at (2.375,1.4) {}; 
  \node (b) at (3.35 ,0.85) {};
  \node (c) at (3.425,0.65) {\large $\boldsymbol{g}_{1 + 1/2}$};
  \path[->,draw=black,very thick] (a) edge node {} (b);  

  \node (a) at (3.6,2.55) {}; 
  \node (b) at (4.575,2.0) {};
  \node (c) at (4.2,1.9) {\large $\boldsymbol{g}_{2 + 1/2}$};
  \path[->,draw=black,very thick] (a) edge node {} (b);  

  \node (x) at (10.5,4.55) {\large $\text{Since} \, \, (\rho_{\textit{nw}} - \rho_w) \overline{\Delta z}_{2+1/2} > 0 \, \, \text{and} \, \, (\rho_{\textit{nw}} - \rho_w) \overline{\Delta z}_{3+1/2} > 0$};
  \node (y) at (10.625,3.75) {\large $\text{and} \, \, \overline{\Delta z}_{1+1/2} > 0, \, \, \overline{\Delta z}_{2+1/2} < 0, \, \, \overline{\Delta z}_{3+1/2} < 0, \, \, \overline{\Delta z}_{4+1/2} > 0$};
  \node (a) at (10.75,2.45) {\large $\overline{\psi}_{\textit{nw},w,2+1/2} = (1-\overline{\omega}^G_{2+1/2}) \displaystyle \frac{ \lambda_{\textit{nw},2} \lambda_{w,3} }{ \lambda_{\textit{nw},2} + \lambda_{w,3} } + \overline{\omega}^G_{2+1/2}   \displaystyle \frac{ \lambda_{\textit{nw},2} \lambda_{w,4} }{ \lambda_{\textit{nw},2} + \lambda_{w,4} }$};

  \node (a) at (10.75,0.75) {\large $\overline{\psi}_{\textit{nw},w,3+1/2} = (1-\overline{\omega}^G_{3+1/2}) \displaystyle \frac{ \lambda_{\textit{nw},3} \lambda_{w,4} }{ \lambda_{\textit{nw},3} + \lambda_{w,4} } + \overline{\omega}^G_{3+1/2}   \displaystyle \frac{ \lambda_{\textit{nw},2} \lambda_{w,4} }{ \lambda_{\textit{nw},2} + \lambda_{w,4} }$};


\end{tikzpicture}
}
\caption{\label{fig:buoyancy_term}
Buoyancy upwinding for half interfaces
$\Gamma_{2+1/2}$ and $\Gamma_{3+1/2}$ in an interaction region. The arrows indicate
the gravity vector at each half interface. For this configuration,
vertex 4 is at the top and vertex 2 is at the bottom. Therefore,
we have
$\overline{\Delta z}_{1+1/2} = z_2 - z_1 > 0$,
$\overline{\Delta z}_{2+1/2} = z_3 - z_2 < 0$,
$\overline{\Delta z}_{3+1/2} = z_4 - z_3 < 0$, and
$\overline{\Delta z}_{4+1/2} = z_1 - z_4 > 0$.
We consider a light non-wetting phase denoted by the
subscript $\textit{nw}$ and a heavy wetting phase denoted 
by the subscript $w$, such that
$\rho_{\textit{nw}} - \rho_{w} < 0$. The interfacial mobility ratio, 
$\overline{\psi}_{\textit{nw},w,k+1/2}$, is defined in
(\ref{buoyancy_mobility_ratio}).
}
\end{figure}
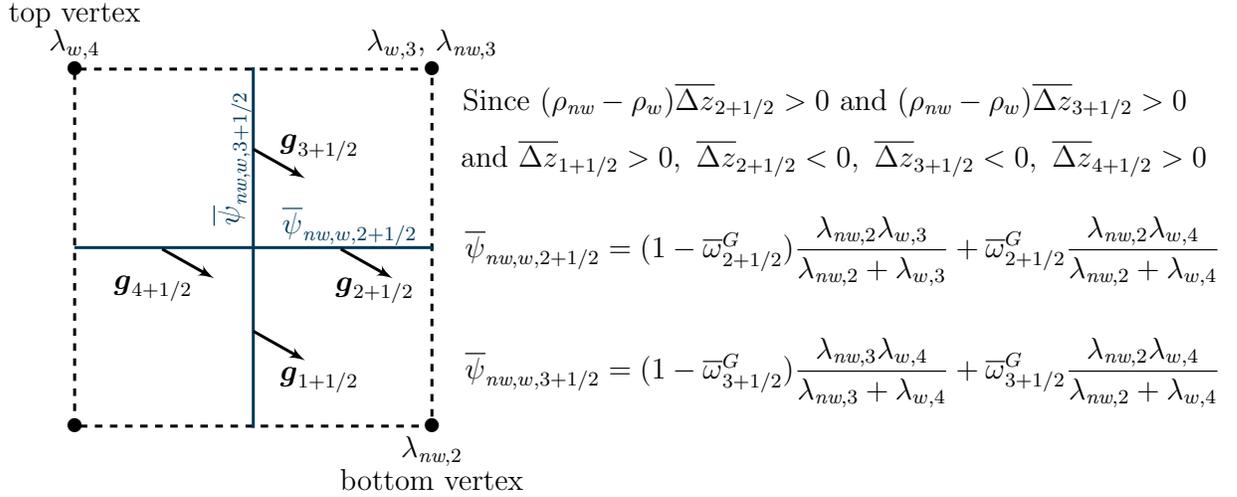

The case $(\rho_{\ell} - \rho_m) \overline{\Delta z}_{k+1/2} \leq 0$ is
treated analogously and is obtained by switching the phase
indices $\ell$ and $m$ in (\ref{buoyancy_mobility_ratio}).
Finally, the weighting coefficient $\overline{\omega}^G_{k+1/2}$
is written as
\begin{equation}
\overline{\omega}^G_{k+1/2} := \left\{
\begin{array}{l l}
\varphi \bigg( \displaystyle \frac{\overline{T}_{k+3/2} \overline{\Delta z}_{k+3/2} }{\overline{T}_{k+1/2} \overline{\Delta z}_{k+1/2} }  \bigg)  & \text{if} \, \overline{\Delta z}_{k+1/2}\overline{\Delta z}_{k+3/2} > 0 \\[10pt]
\varphi \bigg( \displaystyle \frac{\overline{T}_{k-1/2} \overline{\Delta z}_{k-1/2} }{\overline{T}_{k+1/2} \overline{\Delta z}_{k+1/2}}  \bigg)  & \text{if} \,  \overline{\Delta z}_{k+1/2}\overline{\Delta z}_{k-1/2} > 0 \\[10pt]
0                 & \text{otherwise,}
\end{array} \right. \label{definition_omega_g}
\end{equation}
where $\varphi$ is defined in (\ref{limiter}) and guarantees that
$\overline{\omega}^G_{k+1/2} \in [0,1]$. The multidimensional
buoyancy term reduces to the one-dimensional IHU buoyancy term
whenever $\varphi \equiv 0$. In the compressible case, the ratio of 
(\ref{definition_omega_g}) would also contain phase densities.
These densities cancel in the incompressible case considered in 
this work and are omitted for clarity.

Unlike the viscous term, the buoyancy term does not require a
local linear solve to determine the mobility ratio since
$\overline{\psi}_{\ell,m,k+1/2}$ is decoupled from
$\overline{\psi}_{\ell,m,k-1/2}$ and
$\overline{\psi}_{\ell,m,k+3/2}$. By construction,
$\overline{G}_{\ell,k+1/2}$ is a differentiable function of
the saturations, and its derivatives can be computed in a
straightforward manner. We show in
\ref{app_buoyancy_flux_monotonicity} that since the SMU4
limiter satisfies the symmetry property (\ref{symmetry_property}),
the buoyancy term constitutes a monotone approximation of
the buoyancy flux in the sense of (\ref{monotonicity_conditiona}).
Since the same property holds for the
viscous term, (\ref{viscous_buoyancy_split}) implies that
$\overline{F}_{\ell,k+1/2}$ satisfies the monotonicity condition
of (\ref{monotonicity_conditiona}). Finally, we note that by
construction, we have
$\overline{\psi}_{\ell,m,k+1/2} = \overline{\psi}_{m,\ell,k+1/2}$.
It follows that
\begin{equation}
  \overline{G}_{\ell,k+1/2} = -\overline{G}_{m,k+1/2}.
  \label{consistency_buoyancy_term}
\end{equation}

\subsection{\label{subsection_total_velocity_discretization}Total velocity discretization}

In the previous paragraphs, the presentation has focused on the
discretization of the mobilities present in the transport equation.
To complete the formulation of the multidimensional IHU scheme, we
now describe the computation of the total velocity.
Before this,
we note that (\ref{discrete_pressure_equation}) does not provide
a practical way to compute $\overline{u}_{T,k+1/2}$, as this
quantity is needed to evaluate the phase fluxes present in the
right-hand side. Instead, equation (\ref{discrete_pressure_equation})
imposes a consistency requirement on the transport scheme. This
consistency is satisfied by the methodology presented in
Sections~\ref{subsection_viscous_term}-\ref{subsection_buoyancy_term},
as shown by (\ref{consistency_viscous_term}) and
(\ref{consistency_buoyancy_term}).

Considering a half interface $\Gamma_{k+1/2}$
between vertices $k$ and $k+1$, we compute the total velocity, 
$\overline{u}_{T,k+1/2}$, as
\begin{equation}
\overline{u}_{T,k+1/2}( \overline{\Delta p}_{k+1/2},S_k,S_{k+1}) 
:=  
\overline{T}_{k+1/2} \overline{\lambda}^{\textit{WA}}_{T,k+1/2}
\overline{\Delta p}_{k+1/2} 
+
\overline{T}_{k+1/2}
\sum_{\ell} \overline{\lambda}^{\textit{WA}}_{\ell,k+1/2}
\rho_{\ell} g \overline{\Delta z}_{k+1/2},
 \label{discrete_total_velocity_v_g_pc}
\end{equation}
where $\overline{\lambda}_{\ell,k+1/2}^{\textit{WA}}$
denotes the discrete mobility of phase $\ell$ at half
interface $\Gamma_{k+1/2}$. To compute
this quantity, we apply a differentiable discretization
derived in \cite{hamon2016implicit} that aims at
attenuating the flip-flopping in the total velocity
to improve the nonlinear convergence of the scheme.
This methodology is based on an averaging procedure in
which the mobility at half interface $\Gamma_{k+1/2}$
in (\ref{discrete_total_velocity_v_g_pc}) is defined as 
\begin{equation}
\overline{\lambda}^{\textit{WA}}_{\ell,k+1/2} 
:= 
\overline{\beta}_{k+1/2}( \overline{\Delta p}_{k+1/2})
\lambda_\ell(S_k) 
\, \, +  \, \, 
\big(1- \overline{\beta}_{k+1/2}( \overline{\Delta p}_{k+1/2}) \big)
\lambda_\ell(S_{k+1}).
\label{phase_mobility_weighted_average}
\end{equation}
The weighting coefficient $\overline{\beta}_{k+1/2} \in [0,1]$
is designed such that the resulting phase mobilities are
differentiable and increasing functions of the pressure
difference that adapt to the balance between viscous and
buoyancy forces at each interface in the computational
domain. It reads
\begin{equation}
\overline{\beta}_{k+1/2}( \overline{\Delta p}_{k+1/2})
:= \frac{1}{2} + \frac{1}{\pi} 
\arctan( \overline{\gamma}_{k+1/2} \overline{ \Delta \Phi}_{k+1/2}).
\label{appendix_phase_mobility_weighted_average}
\end{equation}
The coefficient $\overline{\gamma}_{k+1/2} \in \mathbb{R}^+$  relates
the magnitude of viscous forces to the magnitude of buoyancy
forces and is given in \cite{hamon2016implicit}.
In (\ref{appendix_phase_mobility_weighted_average}),
$\overline{\Delta \Phi}_{k+1/2}$ denotes the average
potential gradient at the interface, defined by:
\begin{equation}
\overline{\Delta \Phi}_{k+1/2} :=
\frac{1}{n_p} \sum_{\ell} \overline{\Delta \Phi}_{\ell,k+1/2} 
= 
\overline{\Delta p}_{k+1/2}
+
\frac{1}{n_p} \sum_{\ell} \rho_{\ell} g \overline{\Delta z}_{k+1/2}. 
\label{tranche_de_pain}
\end{equation}
We emphasize the fact that the only difference
between the standard PPU discretization of the total velocity used,
for instance, in \cite{lee2018hybrid}, and our methodology is
the discretization of the mobilities described in
(\ref{phase_mobility_weighted_average}) to (\ref{tranche_de_pain}).
In previous work \citep{hamon2016implicit}, we found that
the nonlinear behavior improves significantly when the
mobilities are discretized with
(\ref{phase_mobility_weighted_average}) to
(\ref{tranche_de_pain}) and we proved that the
resulting discrete pressure equation remains well behaved with
this approach.

\section{\label{section_mathematical_properties}Mathematical properties of the scheme}

Here, we analyze the fully implicit scheme for coupled flow and
transport constructed above. We consider the system of governing
equations given in Section~\ref{section_numerical_scheme}, with a
discrete transport equation
(\ref{discrete_transport_equation}) for one of the phases (here,
the wetting phase $\ell = w$) and a discrete pressure equation
(\ref{discrete_pressure_equation}).
The total velocity is discretized as explained in
Section~\ref{subsection_total_velocity_discretization}. Next, we use
the monotonicity condition satisfied by the multidimensional IHU
numerical flux to show that the saturation solution remains between
0 and 1.

\begin{myprop}{(Saturation estimate)}
Consider the pressure-saturation solution to the fully implicit
scheme defined in (\ref{discrete_transport_equation})-(\ref{discrete_pressure_equation}) with $q_{\ell} = 0$. Provided that the initial
saturation is between physical bounds, we have the following
saturation estimate, valid for all interaction regions in the
domain and all time levels $n \in \mathbb{N}^+$:
\begin{equation}
0 \leq \big( S^{(m)}_{k} \big)^{\, n} \leq 1,
\label{nonlinear_elliptic}
\end{equation}
where $k$ denotes the local index of the vertices in interaction
region $(m)$.
\end{myprop}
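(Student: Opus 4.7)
The plan is to prove the estimate by induction on $n$. The base case follows from the assumption on the initial data. For the inductive step, assume that $0 \leq S_c^n \leq 1$ in every control volume $c$ and establish the same bounds at time $n+1$. To show the lower bound $S_c^{n+1} \geq 0$, I would proceed by contradiction: let $c^*$ be a control volume that achieves the minimum of $S^{n+1}$ over the domain and suppose $S_* := S_{c^*}^{n+1} < 0$. With $q_w = 0$, the discrete mass balance (\ref{discrete_transport_equation}) at $c^*$ reads
\begin{equation*}
V \phi \frac{S_* - S_{c^*}^n}{\Delta t^n} + \sum_{m \ni c^*} \bigl( \overline{F}^{\,(m)}_{w,k^*+1/2} - \overline{F}^{\,(m)}_{w,k^*-1/2} \bigr) = 0,
\end{equation*}
where $k^*$ denotes the local index of $c^*$ in each interaction region $(m)$ that contains it. Since $S_* < 0 \leq S_{c^*}^n$, the accumulation term is strictly negative, and therefore the sum of wetting-phase net outflows out of $c^*$ is strictly positive.

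The second step is to compare this actual net outflow with the one obtained in a reference configuration in which, for each interaction region $(m) \ni c^*$, every saturation $S_j^{(m)}$ with $j \neq k^*$ is replaced by $S_*$ while pressures and total velocities are held fixed. Because $S_j^{(m)} \geq S_*$, the monotonicity property (\ref{monotonicity_conditiona}) implies that each reference net outflow at $k^*$ is at least as large as its actual counterpart; summing over the four interaction regions meeting at $c^*$, the total reference net outflow is also strictly positive.

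The third step is to show that the reference net outflow must in fact vanish. In the uniform reference state, the convexity identity (\ref{sum_c_coefs_equal_to_one}) collapses every $\overline{\chi}^{(m)}_{w,k+1/2}$ to the common value $\chi_w(S_*)$; combining the consistency relation (\ref{consistency_viscous_term}) with the discrete pressure equation (\ref{discrete_pressure_equation}) at $q_T = 0$ then shows that the viscous contribution to the total reference outflow equals $\chi_w(S_*) \, V q_T = 0$. For the buoyancy part, every $\overline{\psi}^{(m)}_{w,\textit{nw},k+1/2}$ reduces to the common factor $\lambda_w(S_*) \lambda_{\textit{nw}}(S_*)/(\lambda_w(S_*) + \lambda_{\textit{nw}}(S_*))$, which vanishes under the standard convention (consistent with the Corey-type model mentioned in Assumption~\ref{assumption_mobilities}) that the phase mobility $\lambda_w$ is zero outside $[0,1]$. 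The total reference outflow is therefore zero, contradicting its strict positivity and proving $S_{c^*}^{n+1} \geq 0$. The upper bound $S_c^{n+1} \leq 1$ follows by the mirror argument applied to $1 - S_w$, whose numerical flux inherits the analogous monotonicity condition through (\ref{consistency_viscous_term}) and (\ref{consistency_buoyancy_term}).

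The main obstacle is the evaluation of the reference-state buoyancy contribution. Without the convention $\lambda_w \equiv 0$ on $(-\infty, 0]$, the uniform-state buoyancy net outflow out of $c^*$ is proportional to a sum of the form $\sum_e \overline{T}_e \overline{\Delta z}_e$ around the cell, which need not vanish for heterogeneous permeability, and the comparison argument would have to be sharpened. Every other step is essentially a direct application of the two monotonicity conditions (\ref{monotonicity_conditiona})-(\ref{corollary_monotonicity_conditiona}) and of the viscous and buoyancy consistency identities (\ref{consistency_viscous_term}) and (\ref{consistency_buoyancy_term}) already established for the multidimensional IHU flux.
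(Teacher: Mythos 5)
Your proof is correct, but it takes a genuinely different route from the paper's. You argue \emph{locally}, by contradiction at a cell attaining the global extremum of $S^{n+1}$: the strictly signed accumulation term forces a strictly signed net flux, which you compare (via the monotonicity (\ref{monotonicity_conditiona}), with the total velocity frozen) to a uniform reference state at the extremal value, where the viscous flux collapses to $\chi_w$ times the discrete divergence of $\boldsymbol{u}_T$ and the buoyancy part vanishes. The paper instead argues \emph{globally}: it replaces the saturation arguments by $\max(S,1)$ (using the same monotonicity and the same observation that the flux reduces to the total velocity in the uniform unit state, cf.\ (\ref{pressure_inequality_total_velocity})--(\ref{pressure_equation_alt_form})), obtains one inequality per cell, and then sums over all control volumes so that the fluxes --- now evaluated with identical arguments everywhere --- telescope away by conservation, leaving $\sum_i V_i\phi_i(\max(S_i,1)-1)\le 0$. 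The global summation spares the paper from locating an extremal cell and from strictness bookkeeping; your extremum argument is the more classical discrete maximum principle and is arguably more transparent cell by cell. As for the obstacle you flag: the requirement that $\lambda_w$ vanish identically for $S\le 0$ (and symmetrically $\lambda_{\textit{nw}}$ for $S\ge 1$) is not specific to your route --- the paper's proof also evaluates mobilities at $\max(S_l,1)>1$ and relies on $\overline{\psi}_{w,\textit{nw}}=0$ in the unit state, which under Assumption~\ref{assumption_mobilities} (monotone, non-negative mobilities with vanishing endpoint value) forces exactly the same extension convention. So this is a shared implicit hypothesis of both proofs, not a gap in yours relative to the paper's.
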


\begin{proof}
The proof is done by induction on the time level $n$ and relies
on the monotonicity of the flux in the sense of 
(\ref{monotonicity_conditiona}). We adapt a proof from
\cite{brenner2013finite} also used in \cite{hamon2016capillary}.

We assume that (\ref{nonlinear_elliptic}) holds for all
interaction regions for time levels $n \leq n_0$. To prove
that the solution of the scheme also satisfies this inequality
at time level $n_0 + 1$, we first consider the blue control
volume of Fig.~\ref{fig:interaction_regions}. Unless stated
otherwise, all the quantities in the proof are evaluated fully
implicitly at time level $n_0 + 1$. In interaction
region (2), the net wetting-phase flux at the two half interfaces
$\Gamma^{(2)}_{3+1/2}$ and $\Gamma^{(2)}_{4+1/2}$ is a function
of the total velocity field in interaction region (2), denoted by
$\overline{u}^{(2)}_{T,k+1/2}$, $k \in \{1, \dots, 4\}$, and the
saturations used in the evaluation of the viscous and buoyancy
mobility ratios of (\ref{definition_chi}) and
(\ref{buoyancy_mobility_ratio}), denoted by $S^{(2)}_{k}$.
Isolating the saturation at the center of the blue control volume
-- or, equivalently, the saturation at vertex 4 in interaction
region (2) -- we write the net flux at $\Gamma^{(2)}_{3+1/2}$ and
$\Gamma^{(2)}_{4+1/2}$, denoted by $\overline{F}^{\, (2)}$, as
\begin{equation}
\overline{F}^{\, (2)}_w :=
\bigg(
\overline{F}^{\, (2)}_{w,4+1/2}
-
\overline{F}^{\, (2)}_{w,3+1/2}
\bigg)
\big(
\overline{u}^{(2)}_{T,l+1/2,l \in \{1,\dots,4\}},
S^{(2)}_{4},
S^{(2)}_{l \neq 4}
\big).
\label{net_flux_definition}
\end{equation}
Now, we take the maximum between $S^{(2)}_{l}$ and 1 at vertices
$l \neq 4$ in (\ref{net_flux_definition}). Using the
monotonicity of the net flux given in (\ref{monotonicity_conditiona}),
we obtain
\begin{equation}
\bigg(
\overline{F}^{\, (2)}_{w,4+1/2}
-
\overline{F}^{\, (2)}_{w,3+1/2}
\bigg)
\big(
\overline{u}^{(2)}_{T,l+1/2,l \in \{1,\dots,4\}},
S^{(2)}_{4},
\max( S^{(2)}_{l}, 1)_{l\neq4}
\big)
\leq
\overline{F}^{\, (2)}_w.
\label{transport_inequality_flux}
\end{equation}
We note that an analogous result holds for interaction regions
$(1)$, $(3)$, and $(4)$. Now, we consider the accumulation term
in the blue control volume of Fig.~\ref{fig:interaction_regions},
still viewed from interaction region (2). Using the induction
hypothesis, one can write
\begin{equation}
  V \phi \displaystyle \frac{S^{(2)}_{4}-1}{\Delta t^n}
  \leq
  V \phi \displaystyle \frac{S^{(2)}_{4}-(S^{(2)}_{4})^{n_0} }{\Delta t^n}.
\label{transport_inequality_accum}
\end{equation}
Since we are considering the solution of the system at time $n_0+1$,
it satisfies the discrete transport equation
(\ref{discrete_transport_equation}). Using this remark, we combine
(\ref{transport_inequality_flux}) written in the four interaction
regions and (\ref{transport_inequality_accum}) to obtain
\begin{equation}
V \phi \displaystyle \frac{S^{(2)}_{4}-1}{\Delta t^n}
+
\sum_k
\bigg(
\overline{F}^{\, (k-2)}_{w,k+1/2}
-
\overline{F}^{\, (k-2)}_{w,k-1/2}
\bigg)
\big(
\overline{u}^{(k-2)}_{T,l+1/2,l \in \{1,\dots,4\}},
S^{(k-2)}_{k},
\max( S^{(k-2)}_{l}, 1)_{l\neq k}
\big)
\leq 0.
\label{transport_equation_after_maximum}
\end{equation}
Before proceeding to the next step, we remind the reader that
in the notation detailed in Fig.~\ref{fig:interaction_regions},
$S^{(1)}_{3}$, $S^{(2)}_{4}$, $S^{(3)}_{1}$, and
$S^{(2)}_{4}$ denote the same degree of freedom, $S^{(k-2)}_{k}$,
which refers to the wetting-phase saturation at the
center of the control volume in blue.

Next, we consider the discrete pressure equation in the blue control
volume of Fig.~\ref{fig:interaction_regions}. First, assume that for
the total velocity field satisfying the discrete pressure equation
(\ref{discrete_pressure_equation}), we evaluate the viscous mobility
ratios of (\ref{definition_chi}) and the buoyancy mobility ratio of
(\ref{buoyancy_mobility_ratio}) with unit wetting-phase saturations
in the interaction region, i.e., $S^{(2)}_{l} = 1$ for
$l \in \{1, \dots, 4\}$. In this configuration, we have
$\overline{\chi}^{(2)}_{w,l+1/2} = 1$
and
$\overline{\psi}^{(2)}_{w,l+1/2} = 0$ for $l \in \{1, \dots, 4\}$.
This implies that
\begin{align}
\bigg(
\overline{F}^{\, (2)}_{w,4+1/2}
-
\overline{F}^{\, (2)}_{w,3+1/2}
\bigg)
\big(
\overline{u}^{(2)}_{T,l+1/2,l \in \{1,\dots,4\}},
1,
(1)_{l\neq4}
\big)
&=
\overline{\chi}^{(2)}_{w,4+1/2} \overline{u}^{(2)}_{T,4+1/2}
-
\overline{\chi}^{(2)}_{w,3+1/2} \overline{u}^{(2)}_{T,3+1/2}
\nonumber \\
&+
\overline{\psi}^{(2)}_{w,\textit{nw},4+1/2} 
-
\overline{\psi}^{(2)}_{w,\textit{nw},3+1/2} 
\nonumber \\
&= \overline{u}^{(2)}_{T,4+1/2} - \overline{u}^{(2)}_{T,3+1/2}.
\label{pressure_inequality_total_velocity}
\end{align}
Next, we resort to the procedure employed to obtain
(\ref{transport_inequality_flux}). That is, in
(\ref{pressure_inequality_total_velocity}), we take the maximum
between $S^{(2)}_{l}$ and 1 at vertices $l \neq 4$ and use
the monotonicity of the numerical flux in the sense of
(\ref{monotonicity_conditiona}) to obtain
\begin{equation}
\bigg(
\overline{F}^{\, (2)}_{w,4+1/2}
-
\overline{F}^{\, (2)}_{w,3+1/2}
\bigg)
\big(
\overline{u}^{(2)}_{T,l+1/2,l \in \{1,\dots,4\}},
1,
\max(S^{(2)}_{l},1)_{l\neq4}
\big)
\leq
\overline{u}^{(2)}_{T,4+1/2} - \overline{u}^{(2)}_{T,3+1/2}.
\end{equation}
An analogous inequality holds for interaction regions $(1)$, $(3)$,
and $(4)$, respectively. Since the total velocity field satisfies
the discrete pressure equation (\ref{discrete_pressure_equation}),
we obtain
\begin{equation}
\sum_{k}
\bigg(
\overline{F}^{\, (k-2)}_{w,k+1/2}
-
\overline{F}^{\, (k-2)}_{w,k-1/2}
\bigg)
\big(
\overline{u}^{(k-2)}_{T,l+1/2,l \in \{1,\dots,4\}},
1,
\max(S^{(k-2)}_{l},1)_{l\neq k}
\big)
\leq 0.
\label{pressure_equation_alt_form}
\end{equation}

We now consider (\ref{transport_equation_after_maximum}) and
(\ref{pressure_equation_alt_form}). Since $S^{(k-2)}_{2}$ for
different $k \in \{1, \dots, 4\}$ refers to the same degree of
freedom,
that is, the saturation at the center of the blue control volume
in Fig.~\ref{fig:interaction_regions}, we see that
(\ref{transport_equation_after_maximum}) and
(\ref{pressure_equation_alt_form}) only differ by one argument in
the flux term -- in the second slot -- and in the accumulation term
-- in the saturation at the most recent time level. This argument
is equal to $S^{(2)}_{4}$ in
(\ref{transport_equation_after_maximum}) and to 1 in
(\ref{pressure_equation_alt_form}). In the next equation, we combine
(\ref{transport_equation_after_maximum}) and
(\ref{pressure_equation_alt_form}) by taking the maximum of
$S^{(2)}_{4}$ and 1 which leads to
\begin{align}
  & V \phi 
  \displaystyle \frac{\max( S^{(2)}_{4},1) -1 }{\Delta t^n}
  \qquad \qquad \nonumber \\
  & \quad + \quad \sum_{k}
  \bigg(
  \overline{F}^{\, (k-2)}_{w,k}
  -
  \overline{F}^{\, (k-2)}_{w,k-1}
  \bigg)
  \big(
  \overline{u}^{(k-2)}_{T,l+1/2,l \in \{1,\dots,4\}},
  \max( S^{(k-2)}_{k},1),
  \max(S^{(k-2)}_{l},1)_{l\neq k}
  \big)  
  \leq 0. \label{pressure_equation_transport_equation_after_maximum}
\end{align}
We see that we recover
(\ref{transport_equation_after_maximum}) whenever $S^{(2)}_{4} > 1$,
and we recover (\ref{pressure_equation_alt_form})
whenever $S^{(2)}_{4} \leq 1$. To obtain
(\ref{pressure_equation_transport_equation_after_maximum}), we have
considered the blue control volume in
Fig.~\ref{fig:interaction_regions}, but this inequality is valid for
all the control volumes in the domain. Therefore, we can sum this
inequality over all control volumes in the domain. In the result
inequality, we slightly abuse the notation and now denote by $S_{i}$
the saturation at the center of control volume $i$. Since the
scheme is mass conservative and the fluxes are evaluated with the
same arguments, the flux terms cancel and we are left with
\begin{equation}
\sum_{i} V_{i} \phi_{i} \displaystyle \frac{\max( S_{i},1) -1}{\Delta t^n} \leq 0.
\end{equation}
which gives $S_{i} \leq 1$ for all control volumes. Therefore, we
have proven the rightmost inequality in (\ref{nonlinear_elliptic}).
The proof for $S_{i} \geq 0$ is analogous.
\end{proof}

\section{\label{section_nonlinear_solver}Newton's method with damping}

The systems of nonlinear algebraic equations of
(\ref{discrete_transport_equation})-(\ref{discrete_pressure_equation})
are solved with Newton's method with damping
\citep{deuflhard2011newton}. Successive linearizations and updates
are performed until convergence is reached:
\begin{eqnarray} 
  \text{solve} & & \boldsymbol{J} \, \delta \boldsymbol{U}^{\nu+1} = -  \boldsymbol{R}( \boldsymbol{U}^{\, n+1,\nu}) \text{ for } \delta \boldsymbol{U}^{\nu+1}, 
  \label{damped_newton_linear_solve} \\ 
  \text{then} & & \boldsymbol{U}^{\, n+1,\nu+1} \leftarrow \boldsymbol{U}^{\, n+1,\nu} + \boldsymbol{\tau}^{\nu+1} \delta \boldsymbol{U}^{\nu+1}, 
  \label{damped_newton_solution_update}
\end{eqnarray}
where $n$ denotes the time level, $\nu$ denotes the nonlinear
iteration number,
$\boldsymbol{\tau}^{\nu+1} \in \mathbb{R}^{Nn_p \times Nn_p}$ is
a diagonal matrix of damping parameters, and
$\boldsymbol{J} \in \mathbb{R}^{Nn_p \times Nn_p}$ denotes the
Jacobian matrix of $\boldsymbol{R}$ with respect to the primary
variables.
In (\ref{damped_newton_solution_update}), the diagonal term of a
pressure row of $\boldsymbol{\tau}^{\nu+1} \in ]0,1]$ is equal
to one. The diagonal term of a saturation row is equal to one if
the local absolute saturation update, $|\delta S^{\nu+1}|$, is
smaller than $(\delta S)_{\textit{max}} := 0.2$. Otherwise, this
diagonal term is set to
$(\delta S)_{\textit{max}}/|\delta S^{\nu+1}|$.
We consider the control volume in blue in
Fig.~\ref{fig:interaction_regions}, in which 
the saturation of phase $\ell$ viewed from interaction
region (2) is denoted by $S^{(2)}_{\ell,4}$.
The convergence criterion in this control volume is satisfied when
the value of the normalized residual drops below the tolerance for
the two phases:
\begin{equation}
   \frac{V \phi ( S^{(2)}_{\ell,4}-(S^{(2)}_{\ell,4})^n )
  + \Delta t^n \big( \sum_{k} \big( \overline{F}^{\, (k-2)}_{\ell,k+1/2} - \overline{F}^{\, (k-2)}_{\ell,k-1/2} \big) - V q_{\ell} \big)}{V \phi}  \leq 10^{-8}, \qquad \ell \in \{ \textit{nw}, w\}.
\label{convergence_criterion}
\end{equation}
Convergence is reached when
(\ref{convergence_criterion}) is satisfied in all control volumes.
If the solver fails to converge after 50 iterations, Newton's method
is restarted from the previous time step, with a time step size
reduced by half. If necessary, this time step chopping is repeated.
After a converged solution is obtained, the time step size is reset
to its original size.

\section{\label{section_numerical_examples}Numerical examples}

In this section, we compare the accuracy and performance of four fully implicit schemes. The first scheme,
referred to as 1D-PPU, is based on the standard two-point phase-per-phase upwinding commonly used
in industrial simulators. 1D-IHU is the scheme proposed in \cite{hamon2016implicit}. It uses a two-point hybrid upwinding
combined with the procedure based on a weighted averaging of the mobilities in the total velocity. 
MultiD-PPU is the multidimensional scheme described in \cite{kozdon2011multidimensional}
constructed with the SMU limiter. Finally, MultiD-IHU is the multidimensional scheme based on 
hybrid upwinding described in Section \ref{section_numerical_scheme}. It relies on the SMU4 limiter for 
the viscous and buoyancy fluxes.

\subsection{\label{subsection_three_well_problem_with_buoyancy}Three-well problem with buoyancy}

We first assess the reduction of the numerical bias caused by the orientation of the grid
with a three-well problem with buoyancy derived from \cite{kozdon2011multidimensional,keilegavlen2012multidimensional}.
We consider a $x-y$ domain of dimensions
$[-0.5 \, \text{ft}, 0.5 \, \text{ft}] \times [-0.5 \, \text{ft}, 0.5 \, \text{ft}]$, discretized
with a $51 \times 51$ uniform Cartesian grid so that $\Delta x = \Delta y = 1/51 \, \text{ft}$. The scalar
absolute permeability is equal to
\begin{equation}
k(x,y) = \left\{
\begin{array}{l l}
50 \, \text{mD}  & \text{if } r = \sqrt{x^2 + y^2} < r_0 \\
5 \times 10^{-5} \, \text{mD} & \text{otherwise,} 
\end{array} \right. 
\end{equation}     %
where $r_0 = (1 - \Delta x)/2$. The well locations are computed using a secondary coordinate system defined as
$(x',y') = (x \cos\theta + y \sin\theta, -x \sin\theta + y \cos\theta)$. The injector is
always in the center of the domain at $(x',y') = (0,0)$ and is operated at a fixed
rate. The two producers are placed at $(x',y') = (\pm 0.3 \sin(\pi/6), - 0.3 \cos(\pi/6) )$
and are both operated using a bottom-hole-pressure control. The symmetry of the flow pattern is preserved by orienting
the buoyancy force in the direction that is perpendicular to the straight line connecting the two producers, going updip 
from the injector to the producers. 
The phase properties are such that the non-wetting phase is twice as light as the wetting phase, with
$\rho_{\textit{nw}} = 32 \, \text{lb}_m . \text{ft}^{-3}$
and $\rho_w = 64 \, \text{lb}_m . \text{ft}^{-3}$.
The constant phase viscosities are chosen to be $\mu_{\textit{nw}} = 100 \, \text{cP}$ and $\mu_w = 1 \, \text{cP}$. Finally,
we use Corey-type relative permeabilities such that $k_{r\textit{nw}}(S) = (1-S)^4$ and $k_{rw}(S) = S^2$. 
The gravity number is such that
\begin{equation}
N_G := \frac{k |g_w - g_{\textit{nw}}|}{\mu_w |u_T|} = 1.3.
\end{equation}
The saturation maps for different angles $\theta$ at $T = 0.092 \, \text{PVI}$ are in Fig.~\ref{fig:saturation_maps_three_wells}. 
The constant time step size is $\Delta t \approx 0.0013 \, \text{PVI}$ and corresponds to a CFL number of approximately 3.4. 
The saturation maps show that the 1D-PPU and 1D-IHU schemes are very sensitive to the orientation of the grid as they prefer 
flow along the grid directions. The proposed MultiD-IHU scheme does not exhibit this severe numerical bias and significantly reduces 
the grid orientation effect. We note that the MultiD-IHU saturation maps are in agreement with those generated with MultiD-PPU. 
These findings are confirmed by the water cuts at the two producers in Fig.~\ref{fig:water_cut_three_wells}. They show that the truly 
multidimensional schemes yield a consistent value of the breakthrough time at the producers when the grid is rotated, and better 
preserve the symmetry of the problem. This is not the case for 1D-PPU and 1D-IHU, which both predict very different water cuts
for the two producers. 

\begin{figure}[ht]
  \centering
  \begin{tikzpicture}
    \node[anchor=south west,inner sep=0] at (0,0) {\includegraphics[scale=0.162]{./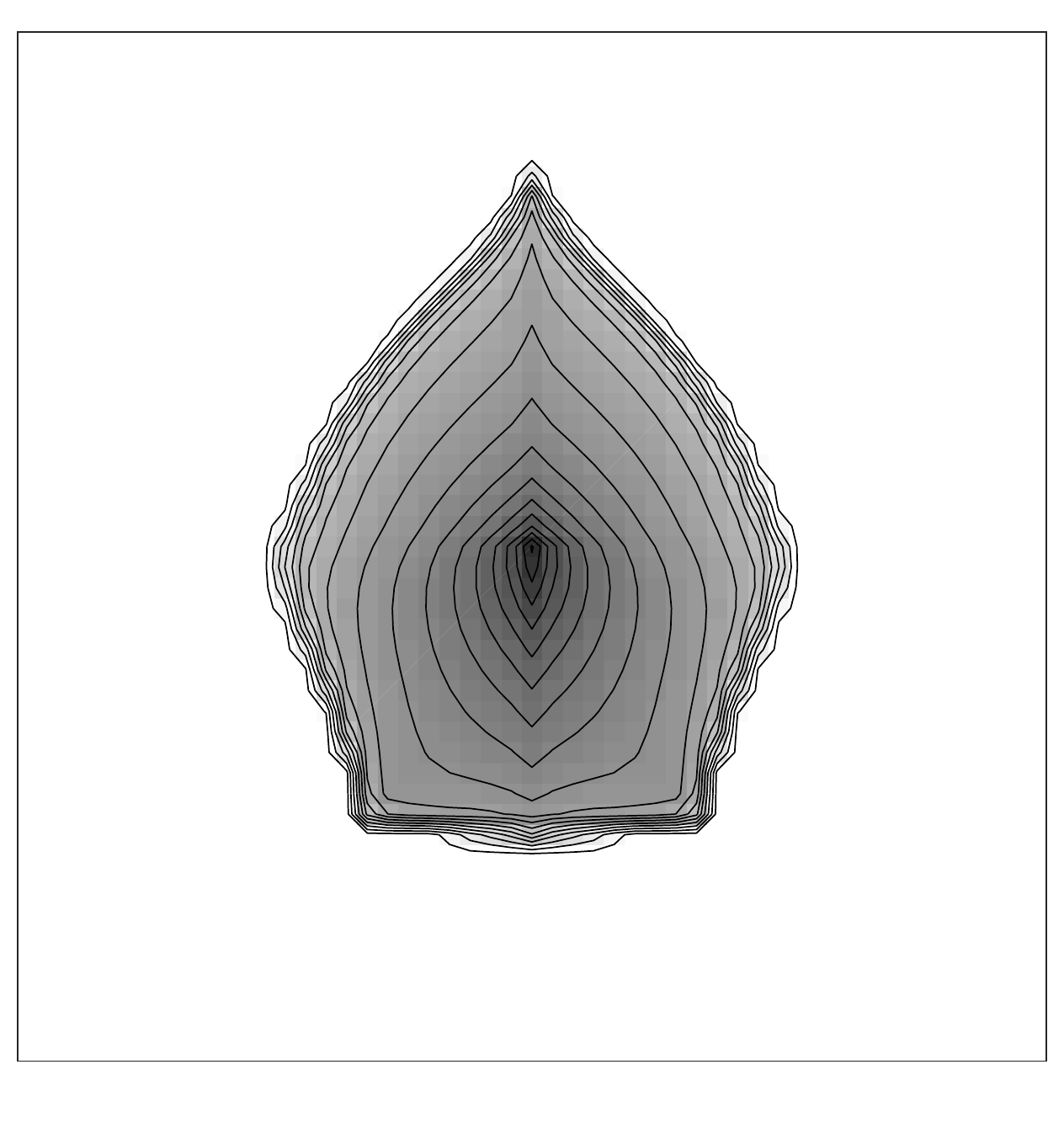}};
    \draw[gray,thick,dashed] (1.05,1.14) circle (1.01cm);
    \node (ib_4) at (1.045,1.135) {\tiny $\textcolor{white}{\star}$};
    \node (ib_4) at (0.7362,0.6252) {\tiny $\textcolor{white}{\star}$};
    \node (ib_4) at (1.3588,0.6252) {\tiny $\textcolor{white}{\star}$};
    \node[anchor=south west,inner sep=0, rotate=22.5] at (0.52,-2.945) {\includegraphics[scale=0.162]{./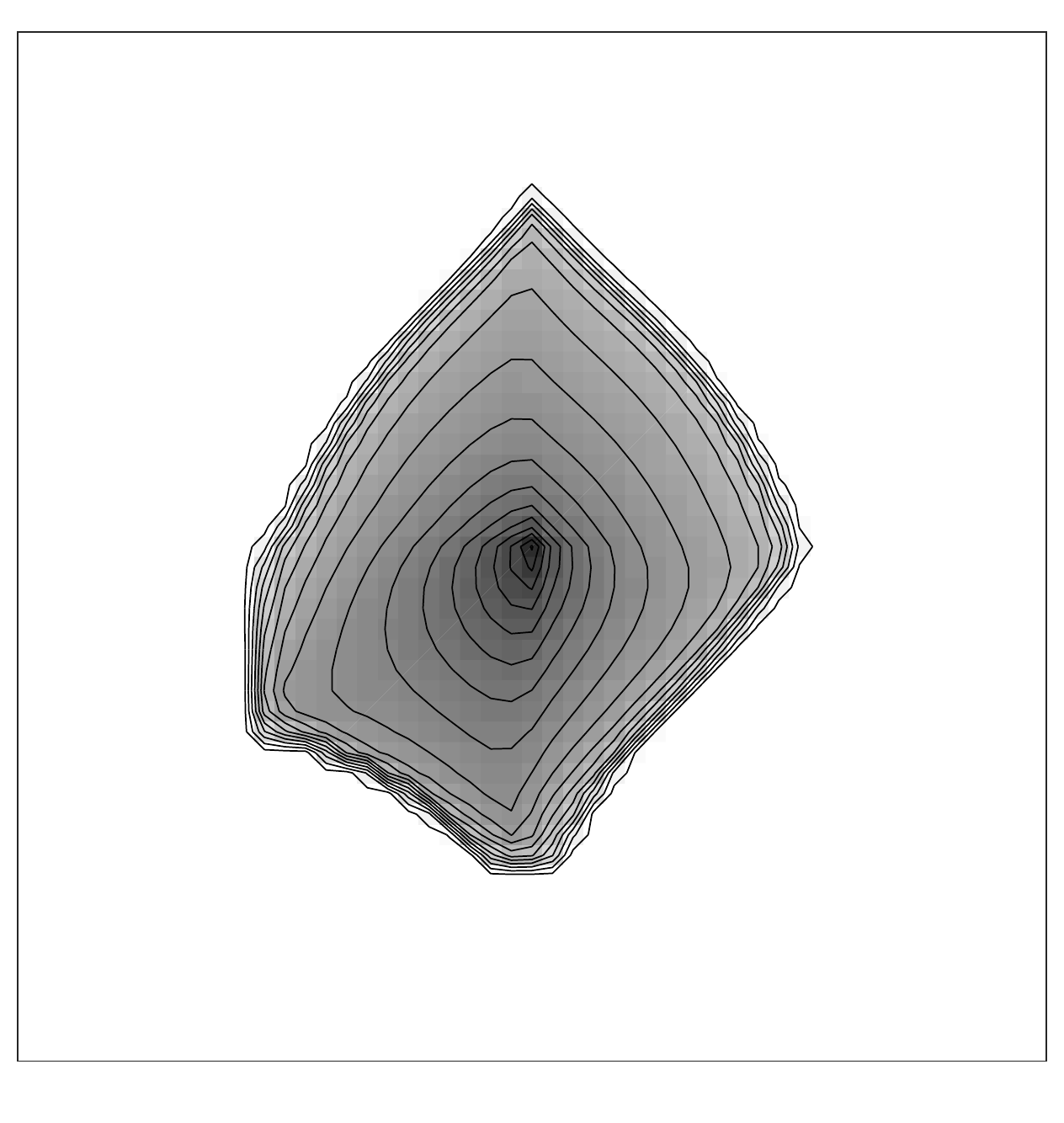}};
    \draw[gray,dashed,thick] (1.05,-1.5) circle (1.01cm);
    \node (ib_4) at (1.045,-1.5) {\tiny $\textcolor{white}{\star}$};
    \node (ib_4) at (0.7362,-2.0098) {\tiny $\textcolor{white}{\star}$};
    \node (ib_4) at (1.3588,-2.0098) {\tiny $\textcolor{white}{\star}$};
    \node[anchor=south west,inner sep=0, rotate=45] at (1.115,-5.68) {\includegraphics[scale=0.162]{./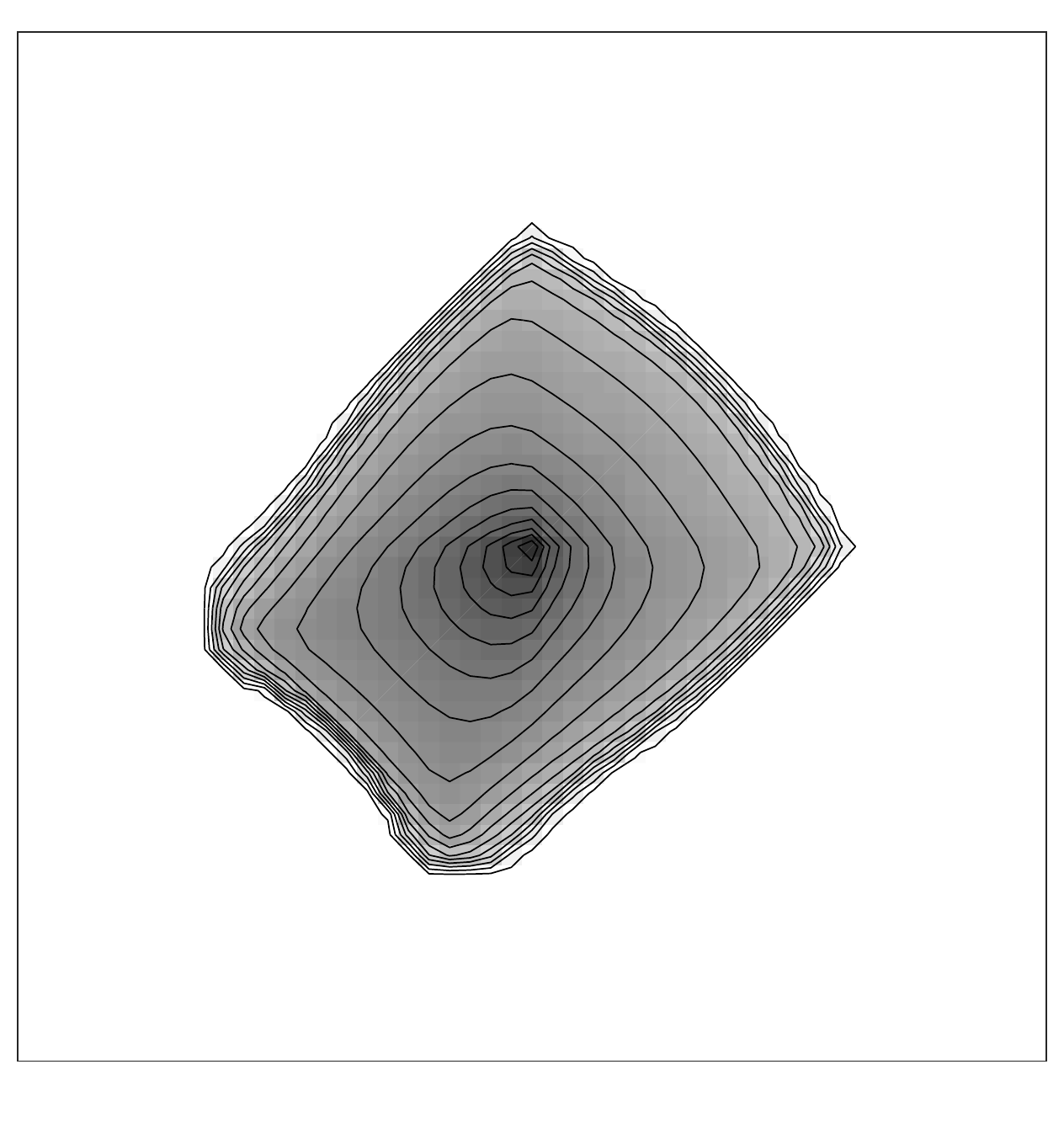}};
    \draw[gray,dashed,thick] (1.05,-4.14) circle (1.01cm);
    \node (ib_4) at (1.045,-4.14) {\tiny $\textcolor{white}{\star}$};
    \node (ib_4) at (0.7362,-4.6498) {\tiny $\textcolor{white}{\star}$};
    \node (ib_4) at (1.3588,-4.6498) {\tiny $\textcolor{white}{\star}$};
    \node (ib_4) at (1.,2.8) {\small 1D-PPU};
    \node (ib_4) at (-1.2,1.16) {\small $\theta = 0$};
    \node (ib_4) at (-1.2,-1.56) {\small $\theta = \pi/8$};
    \node (ib_4) at (-1.2,-4.15) {\small $\theta = \pi/4$};
  \end{tikzpicture}
  \hspace{-0.2cm}
  \begin{tikzpicture}
    \node[anchor=south west,inner sep=0] at (0,0) {\includegraphics[scale=0.162]{./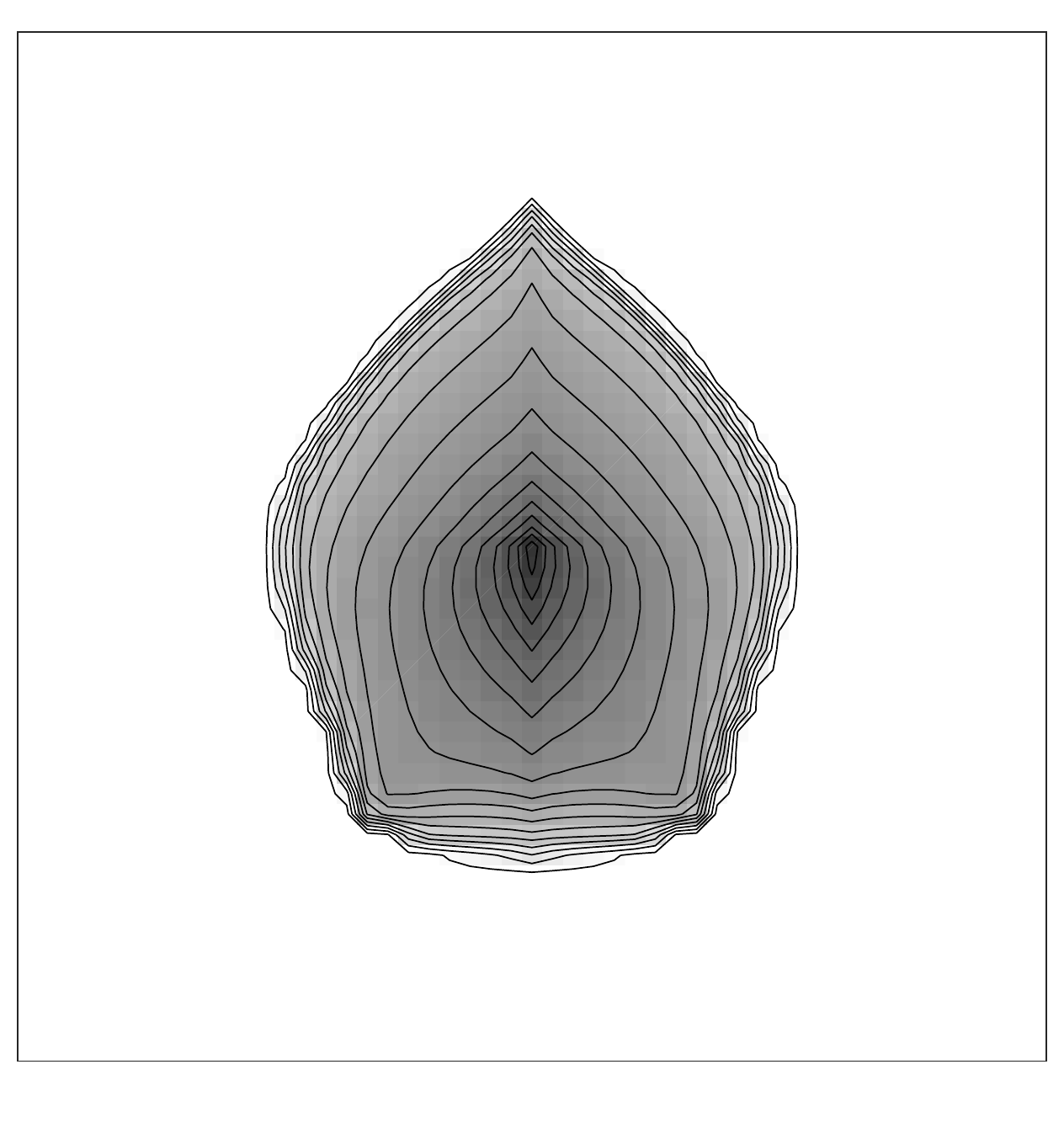}};
    \draw[gray,thick,dashed] (1.05,1.14) circle (1.01cm);
    \node (ib_4) at (1.045,1.135) {\tiny $\textcolor{white}{\star}$};
    \node (ib_4) at (0.7362,0.6252) {\tiny $\textcolor{white}{\star}$};
    \node (ib_4) at (1.3588,0.6252) {\tiny $\textcolor{white}{\star}$};
    \node[anchor=south west,inner sep=0, rotate=22.5] at (0.52,-2.945) {\includegraphics[scale=0.162]{./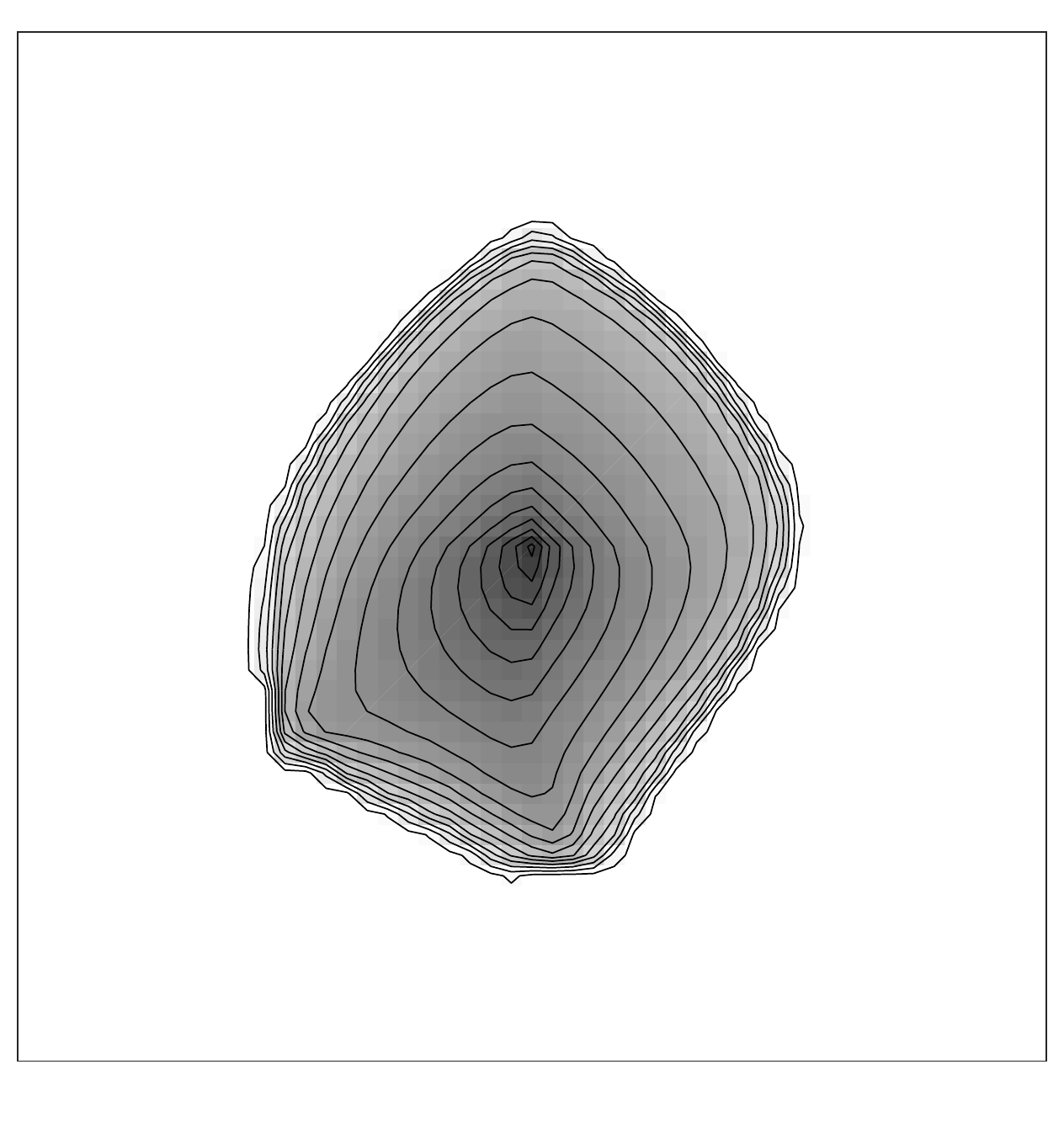}};
    \draw[gray,dashed,thick] (1.05,-1.5) circle (1.01cm);
    \node (ib_4) at (1.045,-1.5) {\tiny $\textcolor{white}{\star}$};
    \node (ib_4) at (0.7362,-2.0098) {\tiny $\textcolor{white}{\star}$};
    \node (ib_4) at (1.3588,-2.0098) {\tiny $\textcolor{white}{\star}$};
    \node[anchor=south west,inner sep=0, rotate=45] at (1.115,-5.68) {\includegraphics[scale=0.162]{./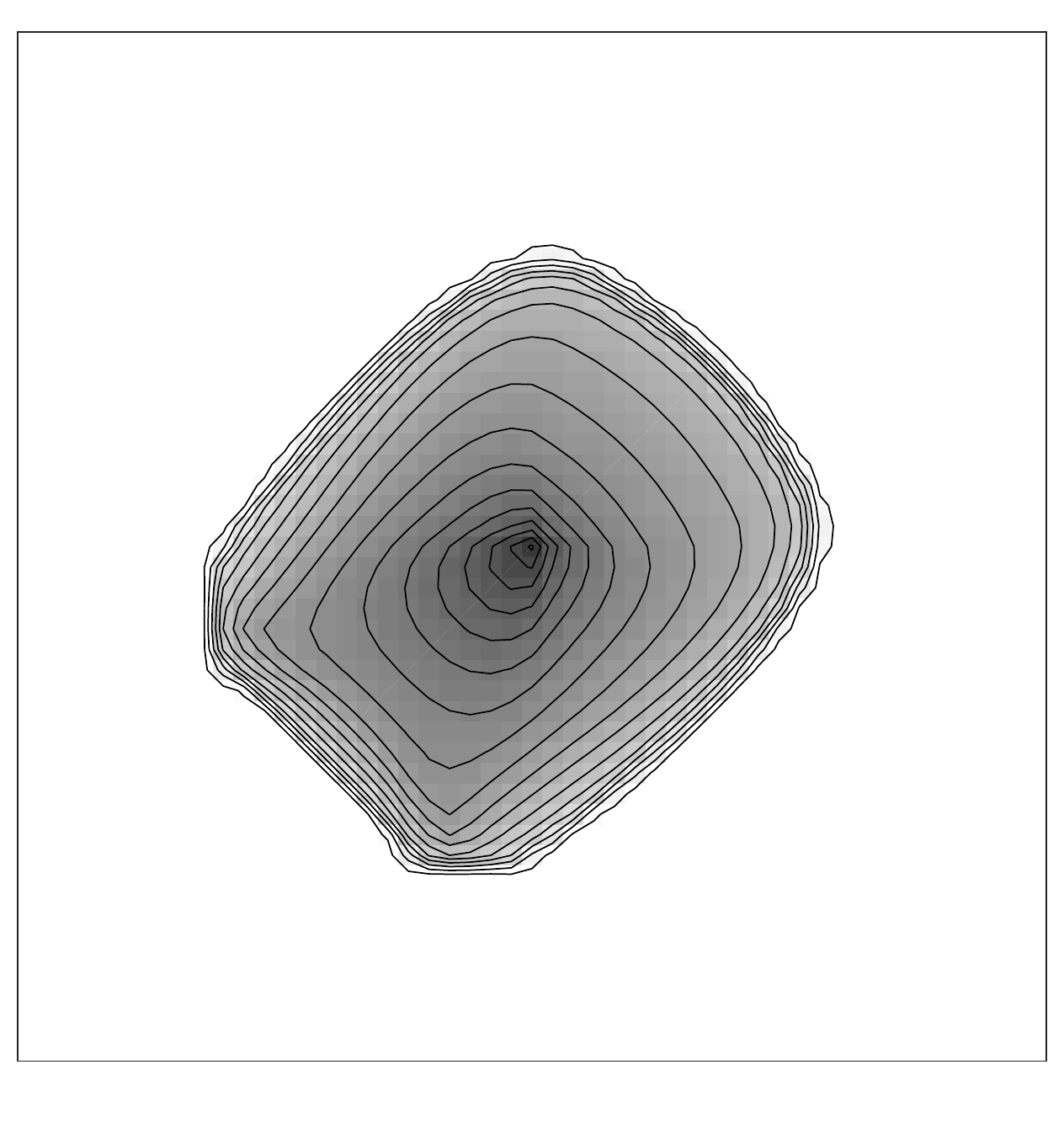}};
    \draw[gray,dashed,thick] (1.05,-4.14) circle (1.01cm);
    \node (ib_4) at (1.045,-4.14) {\tiny $\textcolor{white}{\star}$};
    \node (ib_4) at (0.7362,-4.6498) {\tiny $\textcolor{white}{\star}$};
    \node (ib_4) at (1.3588,-4.6498) {\tiny $\textcolor{white}{\star}$};
    \node (ib_4) at (1.,2.8) {\small 1D-IHU};
  \end{tikzpicture}
  \hspace{-0.2cm}
  \begin{tikzpicture}
    \node[anchor=south west,inner sep=0] at (0,0) {\includegraphics[scale=0.162]{./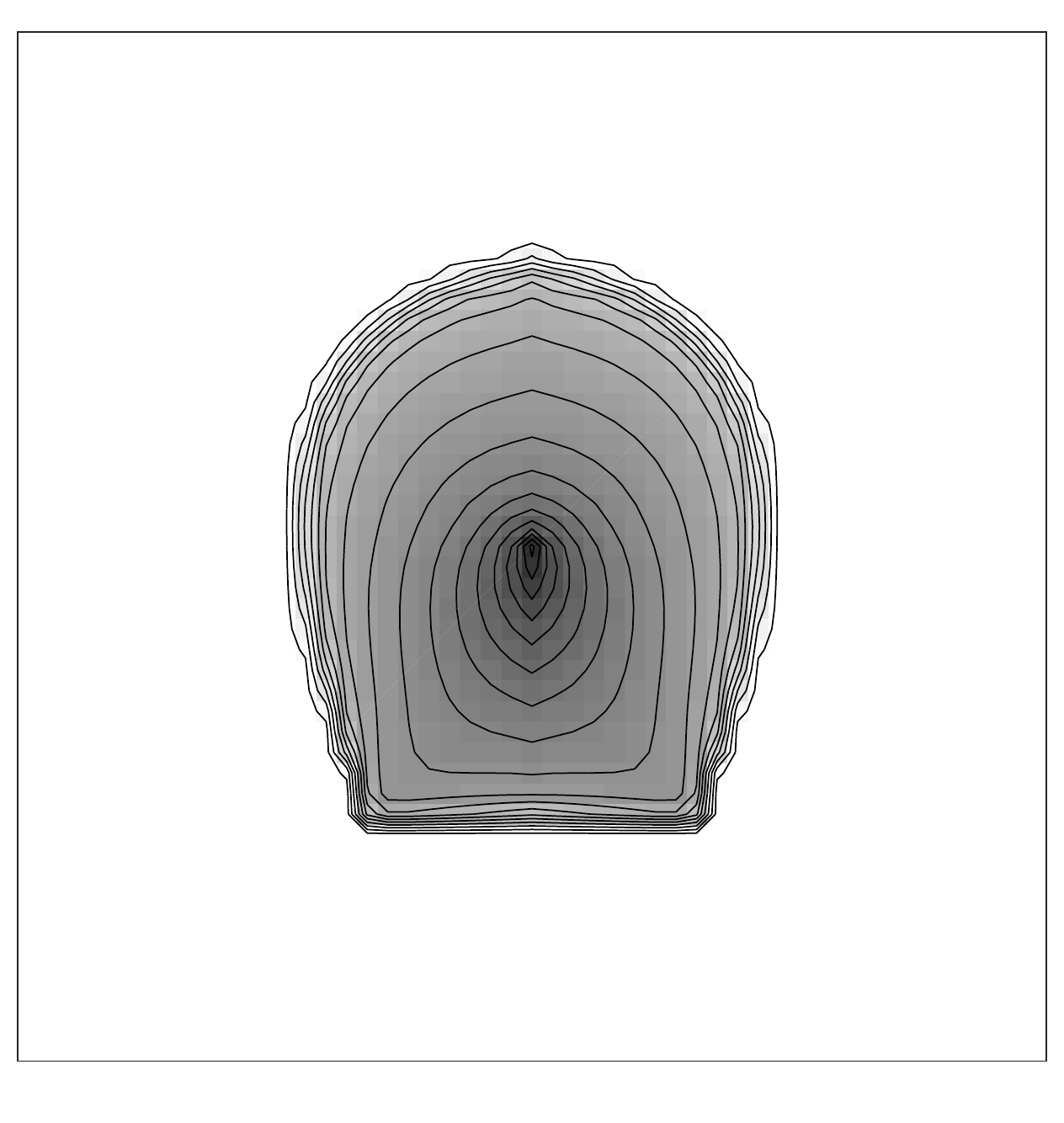}};
    \draw[gray,thick,dashed] (1.05,1.14) circle (1.01cm);
    \node (ib_4) at (1.045,1.135) {\tiny $\textcolor{white}{\star}$};
    \node (ib_4) at (0.7362,0.6252) {\tiny $\textcolor{white}{\star}$};
    \node (ib_4) at (1.3588,0.6252) {\tiny $\textcolor{white}{\star}$};
    \node[anchor=south west,inner sep=0, rotate=22.5] at (0.52,-2.945) {\includegraphics[scale=0.162]{./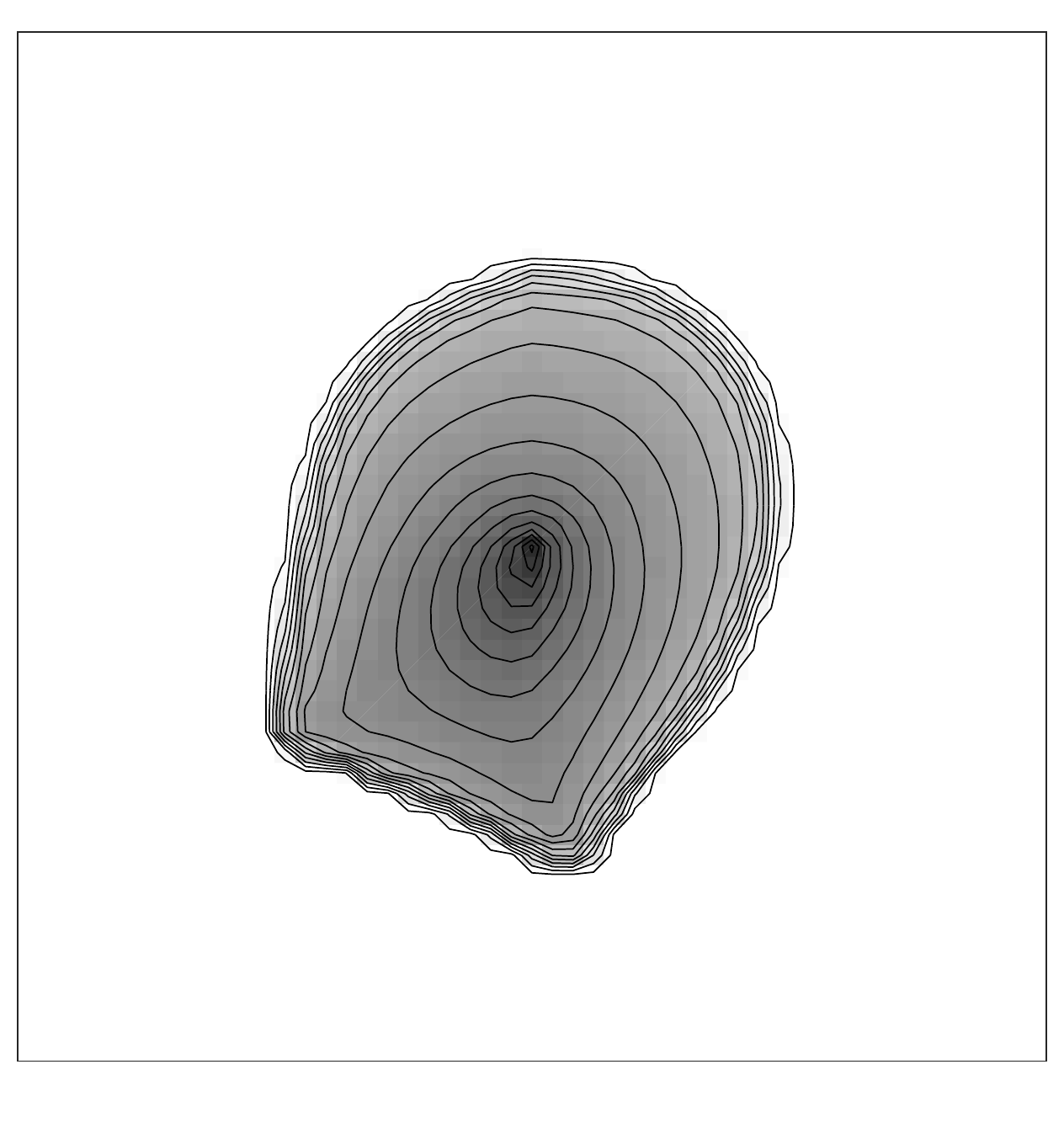}};
    \draw[gray,dashed,thick] (1.05,-1.5) circle (1.01cm);
    \node (ib_4) at (1.045,-1.5) {\tiny $\textcolor{white}{\star}$};
    \node (ib_4) at (0.7362,-2.0098) {\tiny $\textcolor{white}{\star}$};
    \node (ib_4) at (1.3588,-2.0098) {\tiny $\textcolor{white}{\star}$};
    \node[anchor=south west,inner sep=0, rotate=45] at (1.115,-5.68) {\includegraphics[scale=0.162]{./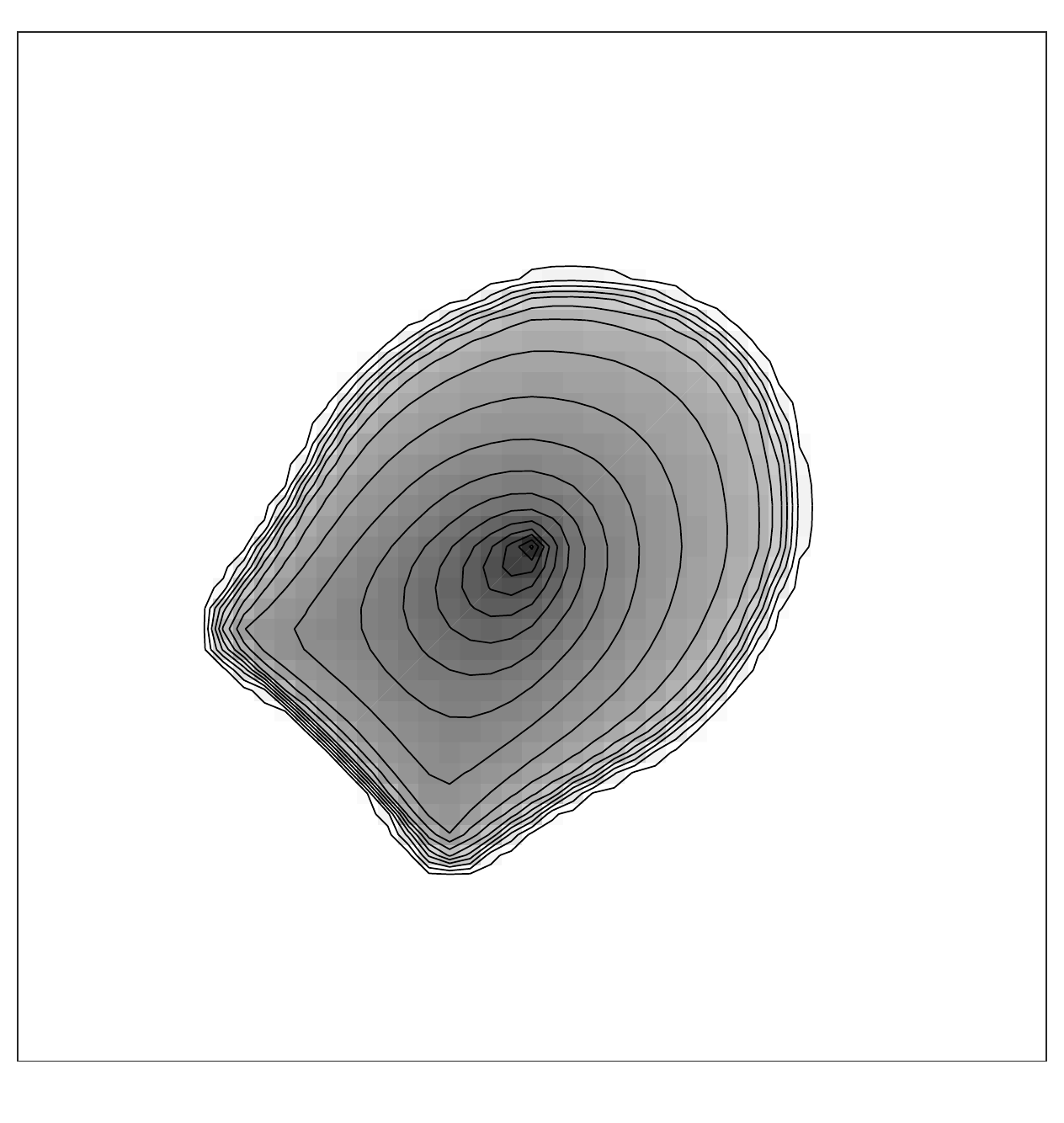}};
    \draw[gray,dashed,thick] (1.05,-4.14) circle (1.01cm);
    \node (ib_4) at (1.045,-4.14) {\tiny $\textcolor{white}{\star}$};
    \node (ib_4) at (0.7362,-4.6498) {\tiny $\textcolor{white}{\star}$};
    \node (ib_4) at (1.3588,-4.6498) {\tiny $\textcolor{white}{\star}$};
    \node (ib_4) at (1.05,2.8) {\small MultiD-PPU};
    \node (ib_4) at (1.05,2.451) {\small (SMU)};
  \end{tikzpicture}
  \hspace{-0.2cm}
  \begin{tikzpicture}
    \node[anchor=south west,inner sep=0] at (0,0) {\includegraphics[scale=0.162]{./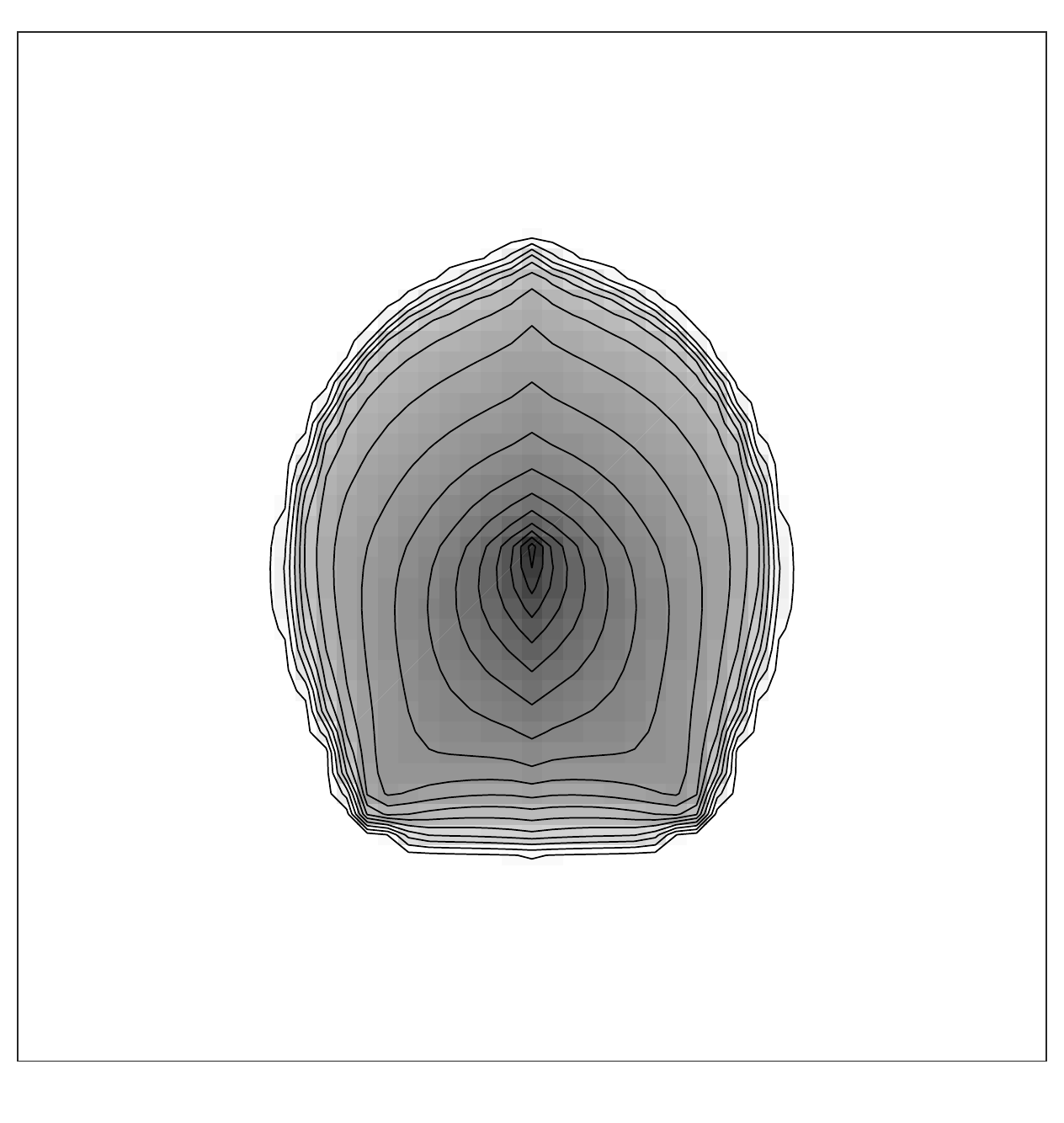}};
    \draw[gray,thick,dashed] (1.05,1.14) circle (1.01cm);
    \node (ib_4) at (1.045,1.135) {\tiny $\textcolor{white}{\star}$};
    \node (ib_4) at (0.7362,0.6252) {\tiny $\textcolor{white}{\star}$};
    \node (ib_4) at (1.3588,0.6252) {\tiny $\textcolor{white}{\star}$};
    \node[anchor=south west,inner sep=0, rotate=22.5] at (0.52,-2.945) {\includegraphics[scale=0.162]{./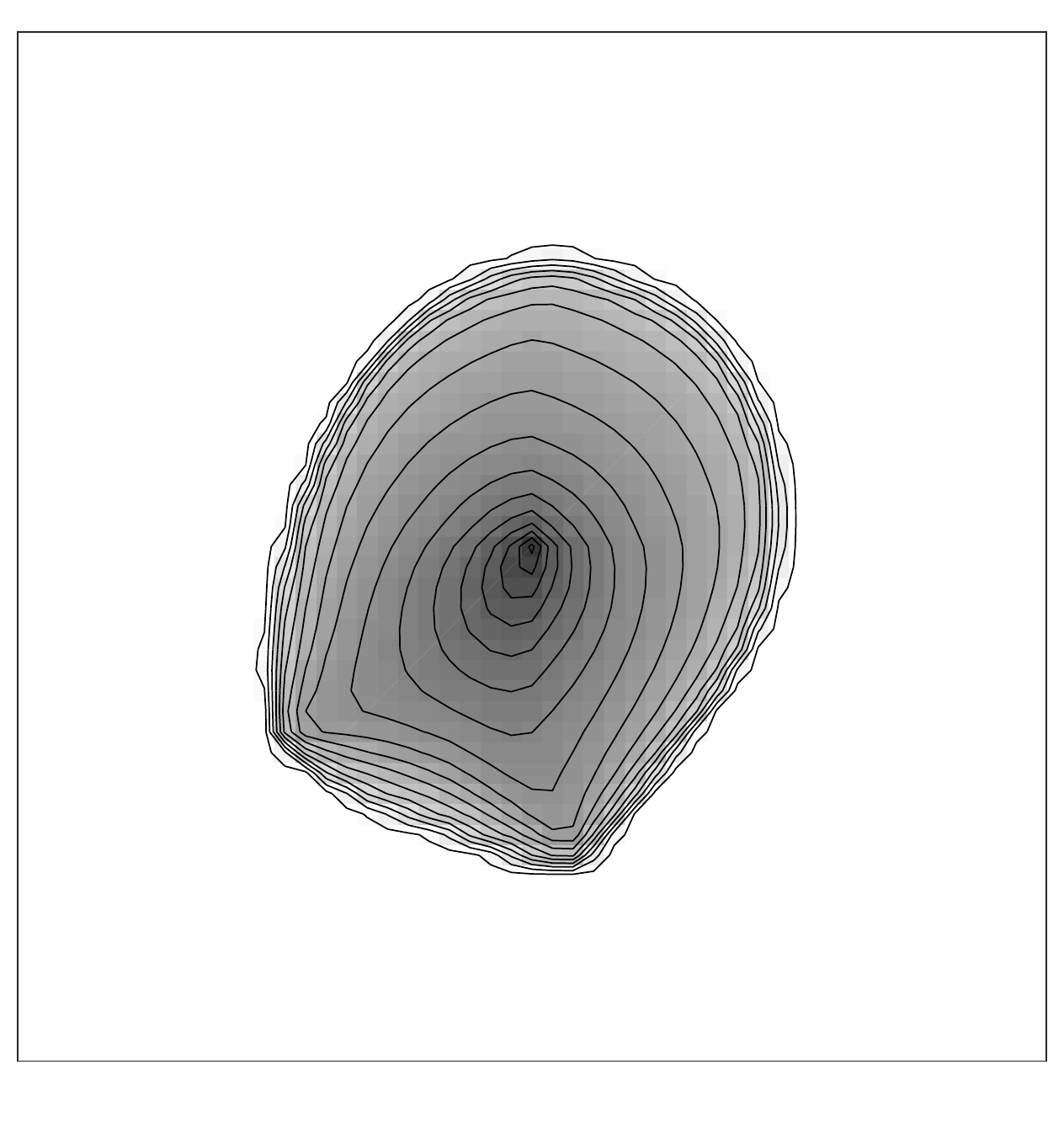}};
    \draw[gray,dashed,thick] (1.05,-1.5) circle (1.01cm);
    \node (ib_4) at (1.045,-1.5) {\tiny $\textcolor{white}{\star}$};
    \node (ib_4) at (0.7362,-2.0098) {\tiny $\textcolor{white}{\star}$};
    \node (ib_4) at (1.3588,-2.0098) {\tiny $\textcolor{white}{\star}$};
    \node[anchor=south west,inner sep=0, rotate=45] at (1.115,-5.68) {\includegraphics[scale=0.162]{./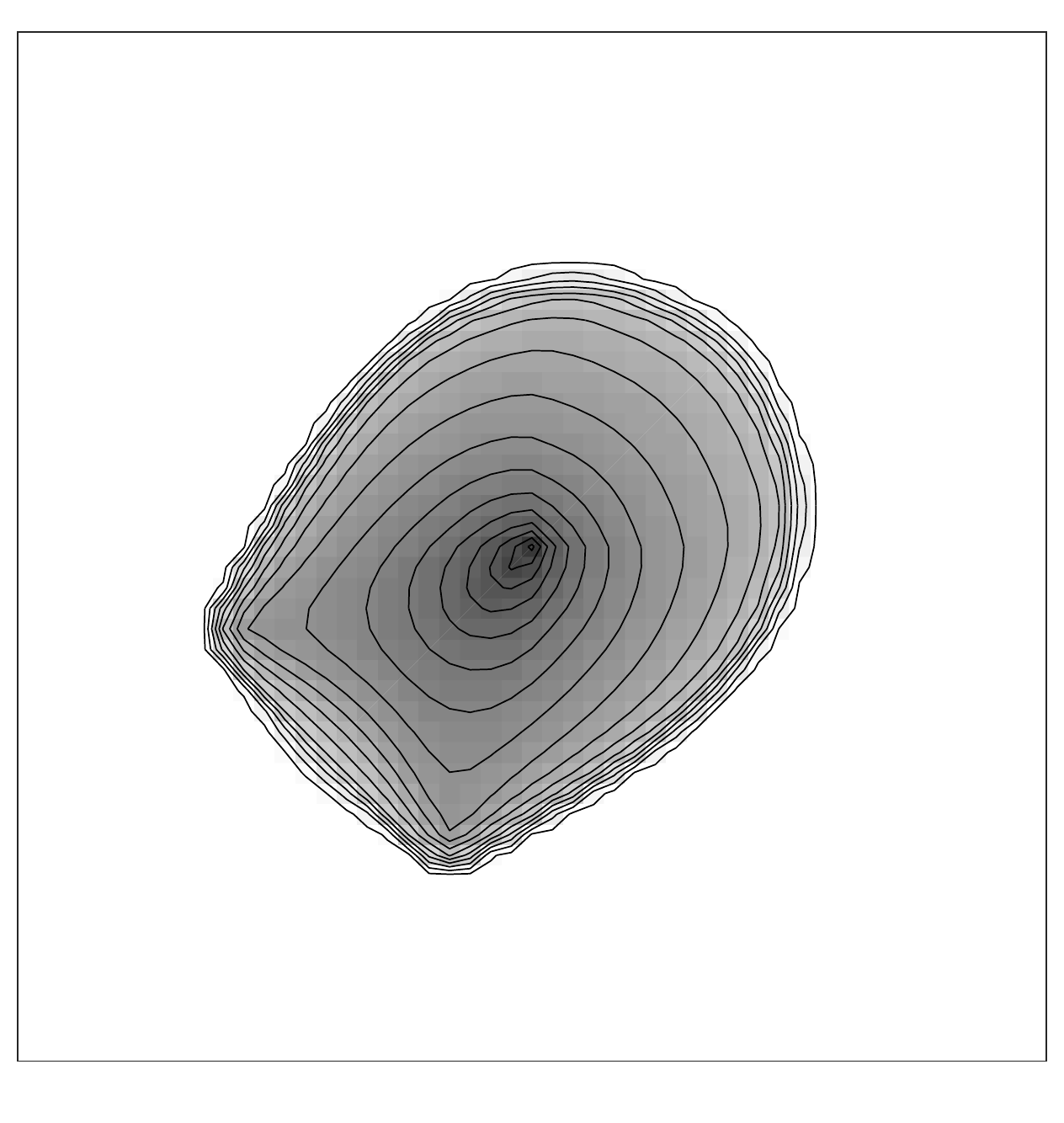}};
    \draw[gray,dashed,thick] (1.05,-4.14) circle (1.01cm);
    \node (ib_4) at (1.045,-4.14) {\tiny $\textcolor{white}{\star}$};
    \node (ib_4) at (0.7362,-4.6498) {\tiny $\textcolor{white}{\star}$};
    \node (ib_4) at (1.3588,-4.6498) {\tiny $\textcolor{white}{\star}$};
    \node (ib_4) at (1.05,2.8) {\small MultiD-IHU};
    \node (ib_4) at (1.05,2.451) {\small (SMU4)};
  \end{tikzpicture}
\vspace{-0.3cm}
\caption{\label{fig:saturation_maps_three_wells}Saturation maps for different angles $\theta$ at $T = 0.092 \, \text{PVI}$
  in the three-well problem with buoyancy. The CFL number of these simulations is 3.4. Outside the dashed circle, the control
  volumes have an absolute permeability set to $5 \times 10^{-5} \, \text{mD}$.  The white stars show the location
  of the wells.}
\end{figure}

\begin{figure}[ht]
\centering
\subfigure[1D-PPU]{
  \begin{tikzpicture}
    \node[anchor=south west,inner sep=0] at (0,0) {\includegraphics[scale=0.3]{./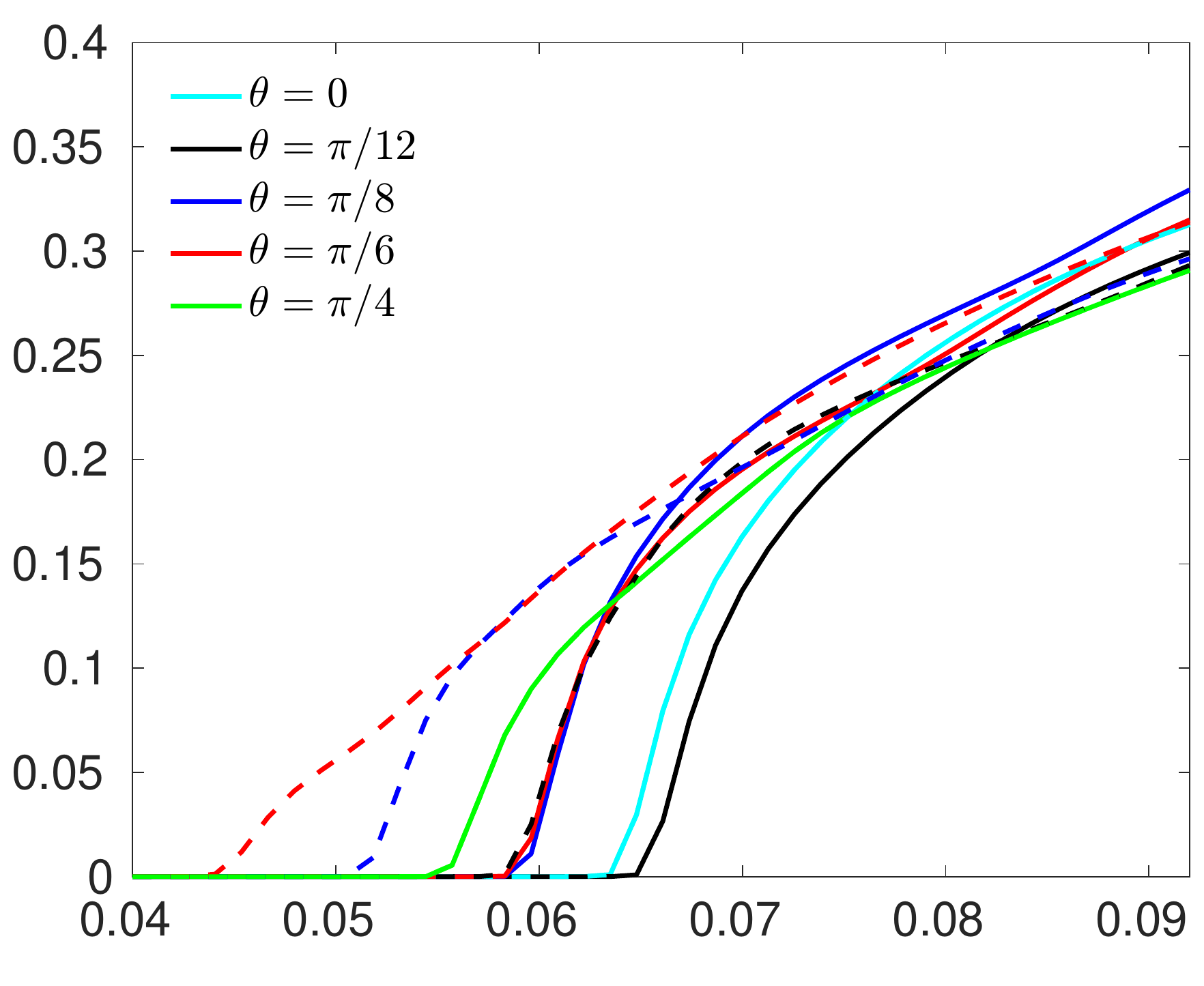}};
    \node (ib_4) at (2.8,-0.) {\small PVI};
    \node[rotate=90] (ib_4) at (-0.25,2.35) {\small Water cut};
  \end{tikzpicture}
\label{fig:water_cut_three_wells_a}
}
\hspace{1cm}
\subfigure[1D-IHU]{
  \begin{tikzpicture}
    \node[anchor=south west,inner sep=0] at (0,0) {\includegraphics[scale=0.3]{./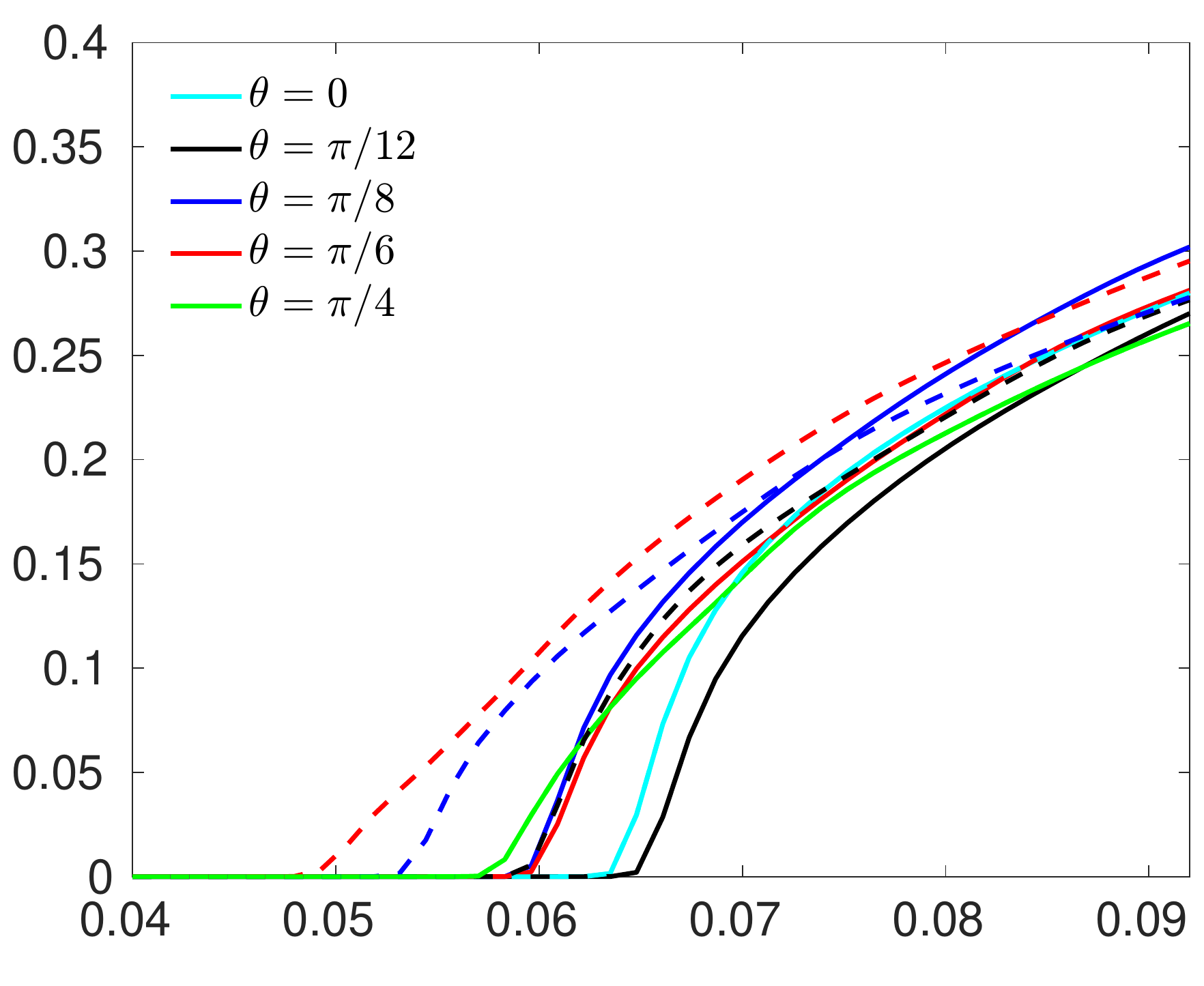}};
    \node (ib_4) at (2.8,-0.) {\small PVI};
  \end{tikzpicture}
\label{fig:water_cut_three_wells_b}
} \\
\subfigure[MultiD-PPU]{
  \begin{tikzpicture}
    \node[anchor=south west,inner sep=0] at (0,0) {\includegraphics[scale=0.3]{./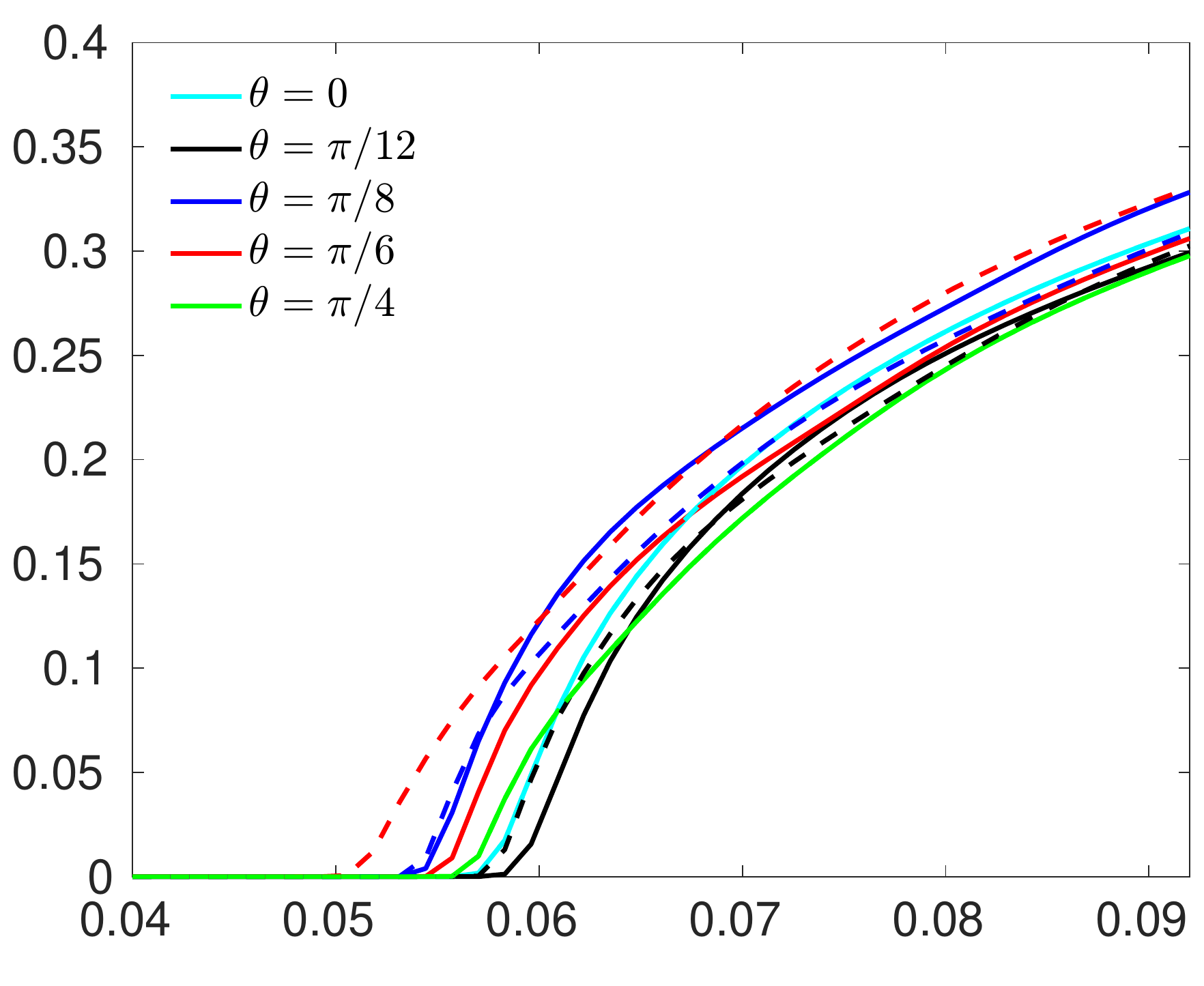}};
    \node (ib_4) at (2.8,-0.) {\small PVI};
    \node[rotate=90] (ib_4) at (-0.25,2.35) {\small Water cut};
  \end{tikzpicture}
\label{fig:water_cut_three_wells_c}
} 
\hspace{1cm}
\subfigure[MultiD-IHU]{
  \begin{tikzpicture}
    \node[anchor=south west,inner sep=0] at (0,0) {\includegraphics[scale=0.3]{./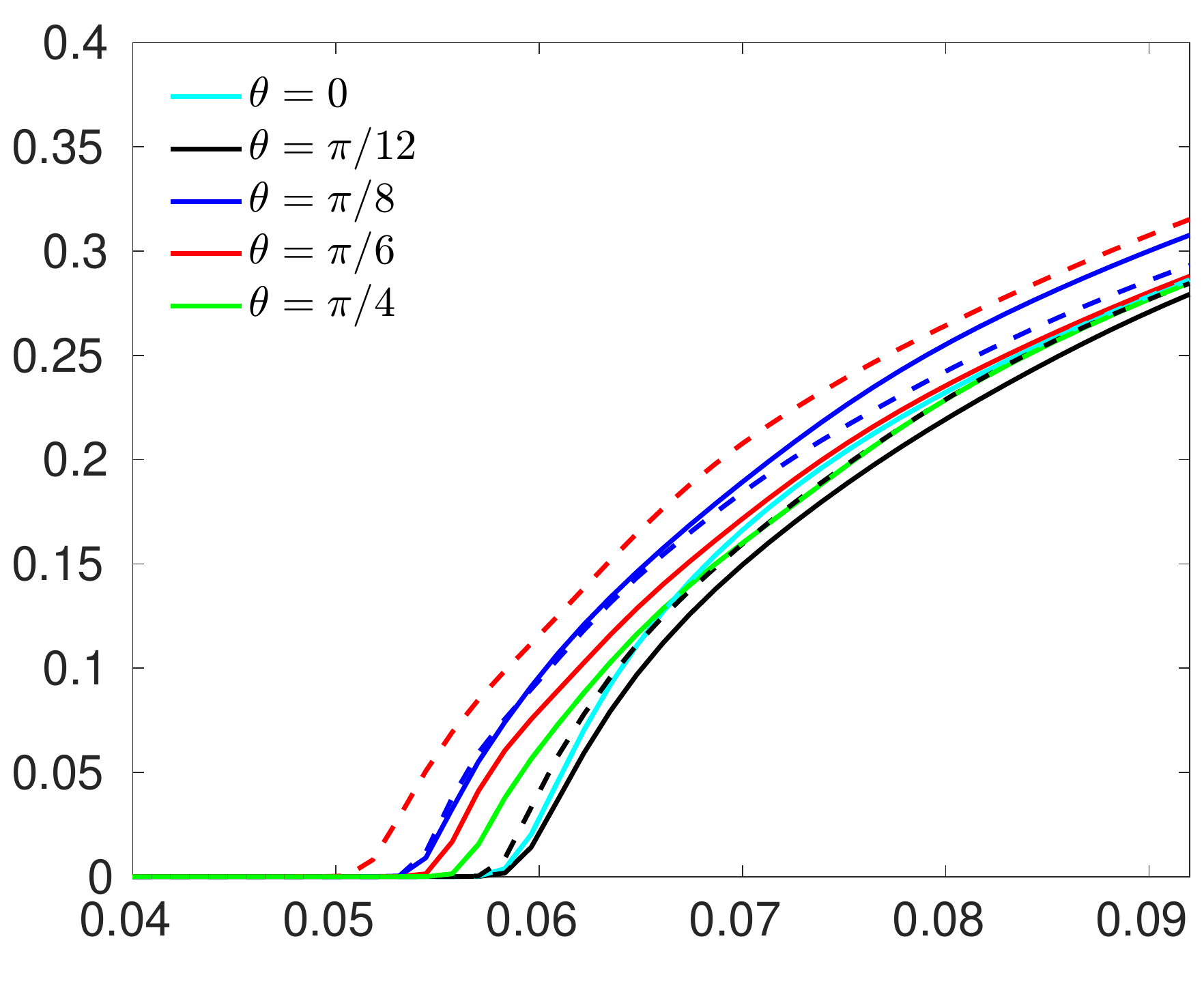}};
    \node (ib_4) at (2.8,-0.) {\small PVI};
  \end{tikzpicture}
\label{fig:water_cut_three_wells_d}
}
\vspace{-0.3cm}
\caption{\label{fig:water_cut_three_wells}Water cuts at the two producers for different angles $\theta$ at
  $T = 0.092 \, \text{PVI}$ in the three-well problem with buoyancy. The CFL number of these simulations is 3.4.
  The solid (respectively, dashed) lines show the water cut for the producer on the left (respectively, on the right).}
\end{figure}

The nonlinear behavior for different angles is summarized in Table \ref{tab:nonlinear_behavior_three_wells}. We run the simulations with a 
small time step size leading to a CFL number of 3.4, and with a larger time step size corresponding to a CFL number of 59.7. 
Given that we have set the maximum number of Newton iterations to a large value -- i.e., 50 --, none of the schemes require time step
cuts. 
As in 
\cite{kozdon2011multidimensional}, we observe that the multidimensional schemes require fewer Newton iterations than the one-dimensional 
schemes, even though the cost per iteration of the multidimensional schemes is significantly larger. With MultiD-IHU, the reduction in 
the number of iterations summed over the five cases compared to 1D-PPU is 12.6\% for a CFL number of 3.4, and reaches 12.3\% for a larger CFL 
number of 59.7. We finally observe that MultiD-IHU consistently converges faster than MultiD-PPU in both time stepping configurations.

\begin{table}[!ht]
\scalebox{0.85}{
\centering
         \begin{tabular}{ccccccccc}
           \\ \toprule
            Angle     & \multicolumn{4}{c}{Small CFL}        & \multicolumn{4}{c}{Large CFL}  \\ 
            $\theta$  & 1D-PPU & 1D-IHU & MultiD-PPU & MultiD-IHU    & 1D-PPU & 1D-IHU & MultiD-PPU & MultiD-IHU \\ \toprule
              0        &   267     &   253     &     241       &    231           &    44    &    44    &       40     &  40 \\
              $\pi/12$ &   270     &   258     &     244       &    234           &    47    &    48    &       43    &   42 \\
              $\pi/8$  &   270     &   259     &     244       &    236           &    48    &    47    &       42     &  40 \\
              $\pi/6$  &   270     &   258     &     248       &    239           &    48    &    47    &       43     &  41 \\
              $\pi/4$  &   267     &   245     &     243       &    234           &    48    &    46    &       43     &  43 \\ \bottomrule 
         \end{tabular}}
\caption{\label{tab:nonlinear_behavior_three_wells} Total number of Newton iterations for different angles at
  $T = 0.092 \, \text{PVI}$ in the three-well problem with buoyancy. There was no time step cut for any of the
  schemes considered here.}
\end{table}

\subsection{\label{subsection_second_numerical_example}Two-phase flow in a heterogeneous medium}

Here, we evaluate the reduction of the grid orientation effect achieved with MultiD-IHU on a heterogeneous porous medium. 
The two-dimensional $x - y$ domain of dimensions $[-75 \, \text{ft}, \, 75 \, \text{ft}] \times [-75 \, \text{ft}, \, 75 \, \text{ft}]$ 
is discretized with 101 $\times$ 101 uniform Cartesian control volumes. Outside a disc of radius $r_0 = (150 - \Delta x)/2 \, \text{ft}$ 
centered in $(x_0,y_0) = (x'_0,y'_0) = (0,0)$, we set a constant absolute permeability of $10^{-10} \, \text{mD}$. Inside the disc, the permeability 
field is given by
\begin{equation}
k(x',y') = 200 \bigg( 1 + \big( \cos(3 x' \pi) \cos(3 y' \pi) \cos(3 \bar{x} \pi) \cos(3 \bar{y} \pi) \big)/2 \bigg)^3,
\label{permeability_field}
\end{equation}
where 
\begin{equation}
\bar{x} = x' \cos\big(\frac{\pi}{4}\big) + y' \sin\big(\frac{\pi}{4}\big)  \qquad \text{and} \qquad \bar{y} = -x' \sin\big(\frac{\pi}{4}\big) + y' \cos\big(\frac{\pi}{4}\big).
\end{equation}
The permeability map obtained from (\ref{permeability_field}) can be seen in Fig.~\ref{fig:permeability_map_second_numerical_example}.
It exhibits eight preferential paths originating from the center of the domain. This map is rotated to test the sensitivity of the 
schemes to the orientation of the grid using the same technique as in Section \ref{subsection_three_well_problem_with_buoyancy}. The height of 
each control volume in the domain is a function of the radius $r = \sqrt{x'^2+y'^2}$, computed with
\begin{equation}
  h(x',y') = h_{\textit{ref}} + h_{\textit{bump}}(x',y') \qquad \text{where} \quad h_{\textit{bump}}(x',y') = 20 \sin\bigg( \big(1 + \min(1,\frac{r}{r_0}) \big)\frac{\pi}{2} \bigg),
  \label{depth_map}
\end{equation}
where $h_{\textit{ref}}$ denotes the height of the domain boundaries. Here, $h(x_a',y_a') > h(x_b',y_b')$ means that $(x_b',y_b')$ is deeper
than $(x_a',y_a')$. With (\ref{depth_map}), $(x'_0,y'_0) = (0,0)$ is at the top of the domain while the domain boundaries are at the bottom 
as shown in Fig.~\ref{fig:depth_map_second_numerical_example}.
The domain is initially saturated with the lighter non-wetting phase.
A well placed in control volume $(x'_0,y'_0) = (0,0)$ injects a heavy and mobile wetting phase into a domain fully saturated with the
light non-wetting phase. We fix the pressure in the control volumes surrounding the disc. The phase properties are the same as
in Section \ref{subsection_three_well_problem_with_buoyancy}. The gravity number is $N_G = 12.9$, with countercurrent flow 
at about 25\% of the interfaces.

\begin{figure}[ht]
\centering
\subfigure[]{
\begin{tikzpicture}
\node[anchor=south west,inner sep=0] at (0,0) {\includegraphics[scale=0.25]{./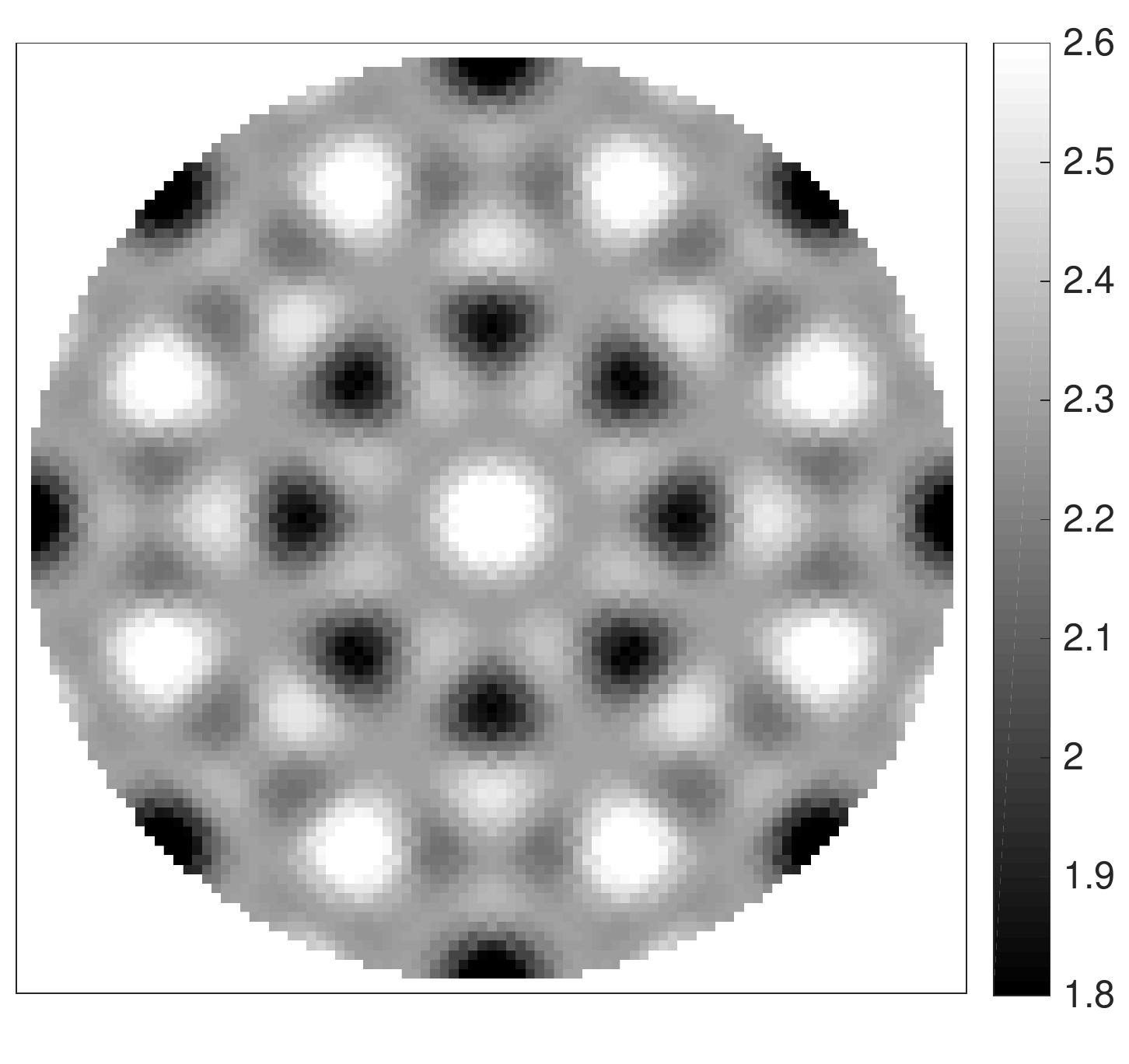}};
\node (ib_4) at (1.6,0) {$x$};
\node (ib_4) at (-0.2,1.75) {$y$};
\end{tikzpicture}
\label{fig:permeability_map_second_numerical_example}
} \hspace{2cm}
\subfigure[]{
\begin{tikzpicture}
\node[anchor=south west,inner sep=0] at (0,0) {\includegraphics[scale=0.25]{./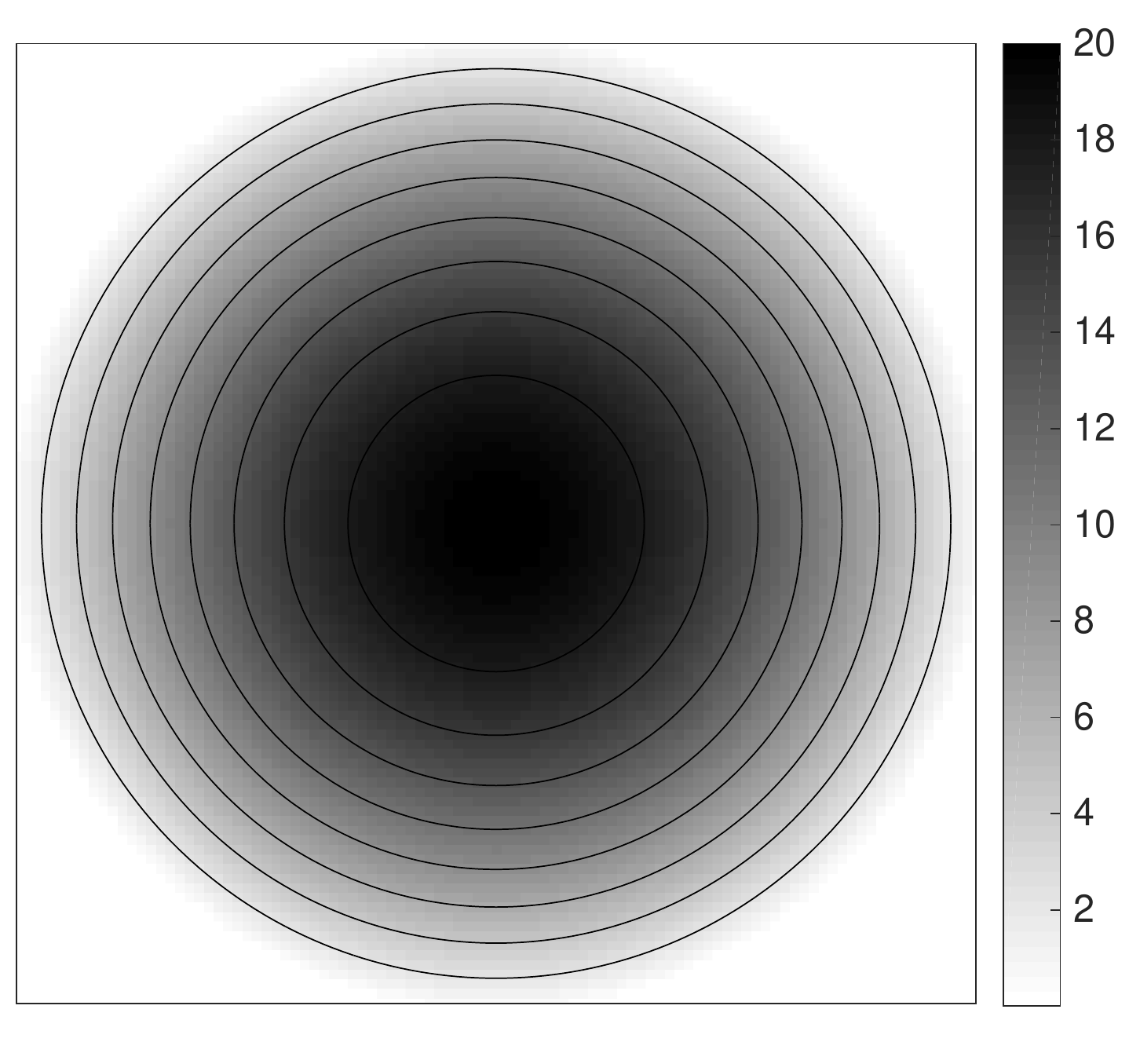}};
\node (ib_4) at (1.6,0) {$x$};
\node (ib_4) at (-0.2,1.75) {$y$};
\end{tikzpicture}
\label{fig:depth_map_second_numerical_example}
}
\vspace{-0.3cm}
\caption{\label{fig:permeability_map_depth_second_numerical_example}Logarithm of the absolute permeability field in 
\subref{fig:permeability_map_second_numerical_example}. It is computed with (\ref{permeability_field}) within 
the disc of radius $r_0$ centered in $(x'_0,y'_0) = (0,0)$. The smallest (respectively, largest) permeability is 
$30.7 \, \text{mD}$ (respectively, $675 \, \text{mD}$). The height $h_{\textit{bump}}$ obtained from (\ref{depth_map}) is shown in 
\subref{fig:depth_map_second_numerical_example}. Outside the disc, the absolute permeability is set 
to $10^{-10} \, \text{mD}$ (not shown in the figure) and the domain is flat.}
\end{figure}

The saturation maps after 0.06 PVI are in Fig.~\ref{fig:saturation_maps_second_numerical_example}. They have been obtained 
with a constant time step of $\Delta t \approx 0.0021 \, \text{PVI}$ corresponding to a CFL number of 21.1. The saturation pattern
is clearly biased by the orientation of the grid with both 1D-PPU and 1D-IHU. This is the case for all angles, but particularly
for $\theta = \pi/12$ and $\theta = \pi/8$, for which the grid orientation effect is aggravated by the fact that the high-permeability
channels are aligned with the main directions of the grid. This nonphysical behavior is significantly attenuated -- though not completely
eliminated -- with the multidimensional schemes. We note that MultiD-PPU and MultiD-IHU yield similar saturation patterns far from the well,
even though MultiD-PPU better reduces the numerical biasing caused by the orientation
of the grid near the well.

\begin{figure}[ht]
\centering
\begin{tikzpicture}
  \node[anchor=south west,inner sep=0] at (0,0) {\includegraphics[scale=0.162]{./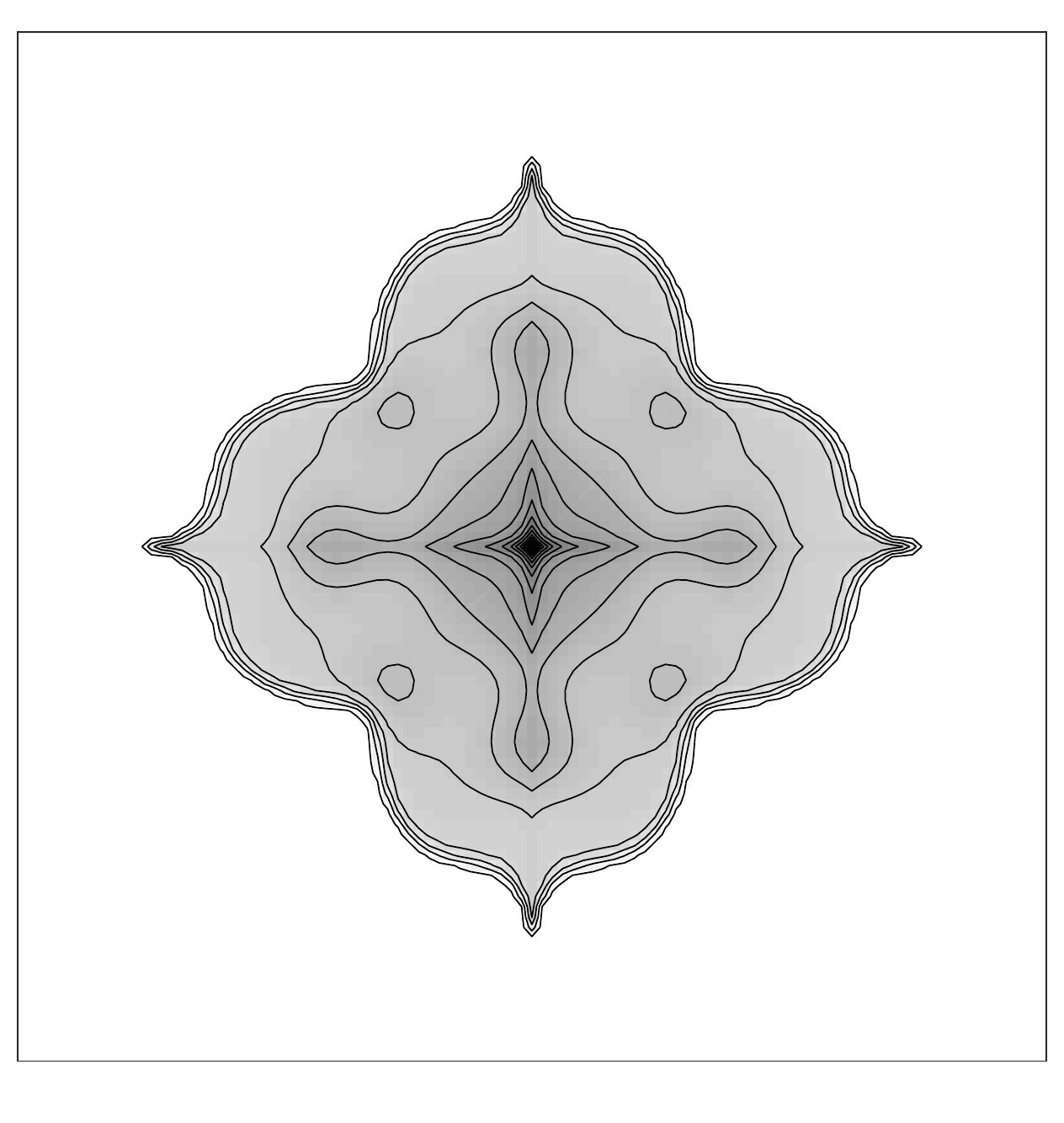}};
  \draw[gray,thick,dashed] (1.05,1.14) circle (1.01cm);
  \node (ib_4) at (1.045,1.135) {\tiny $\textcolor{white}{\star}$};
  \node[anchor=south west,inner sep=0, rotate=15] at (0.32,-2.65) {\includegraphics[scale=0.162]{./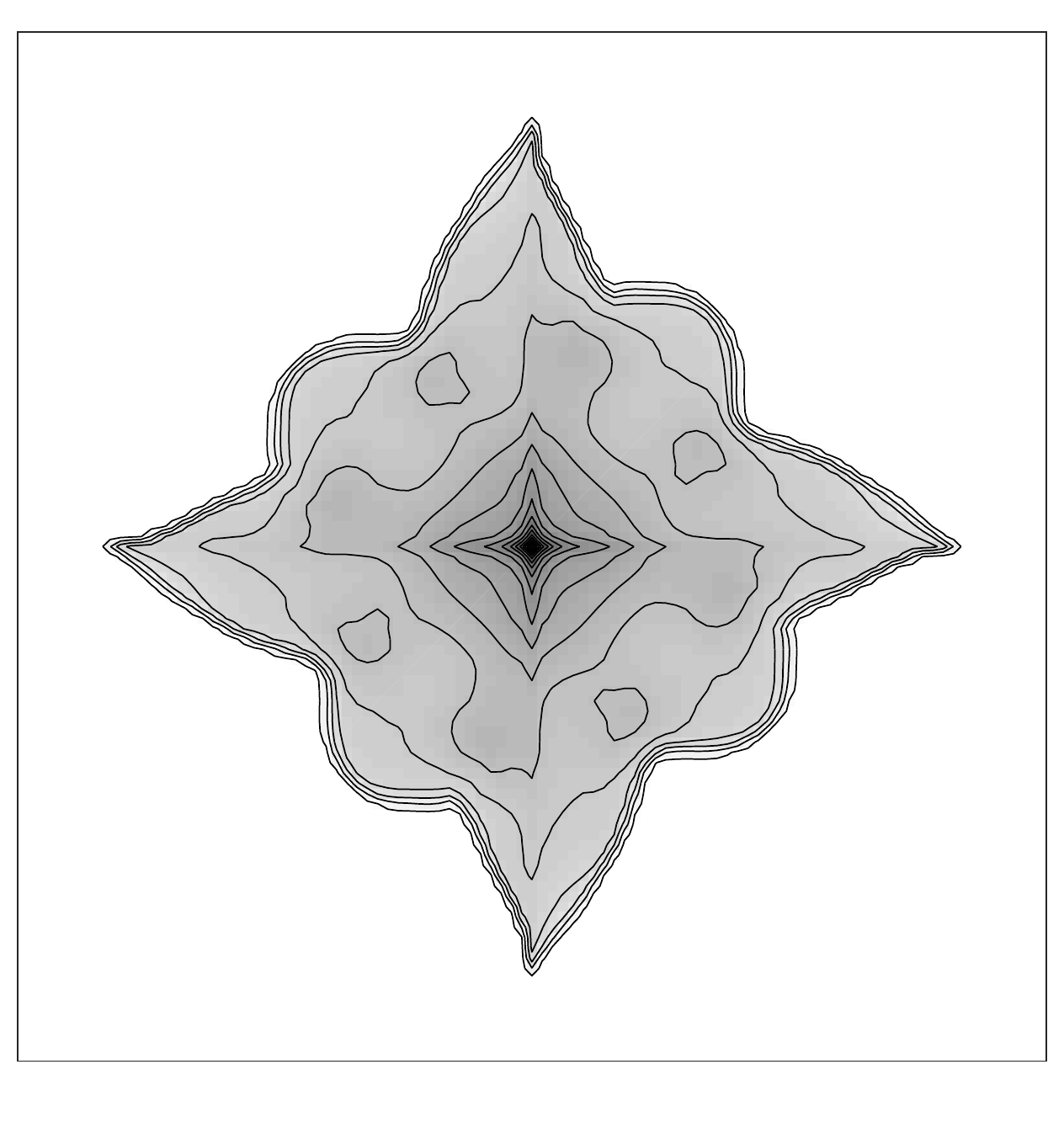}};
  \draw[gray,thick,dashed] (1.05,-1.28) circle (1.01cm);
  \node (ib_4) at (1.04,-1.28) {\tiny $\textcolor{white}{\star}$};
  \node[anchor=south west,inner sep=0, rotate=22.5] at (0.52,-5.245) {\includegraphics[scale=0.162]{./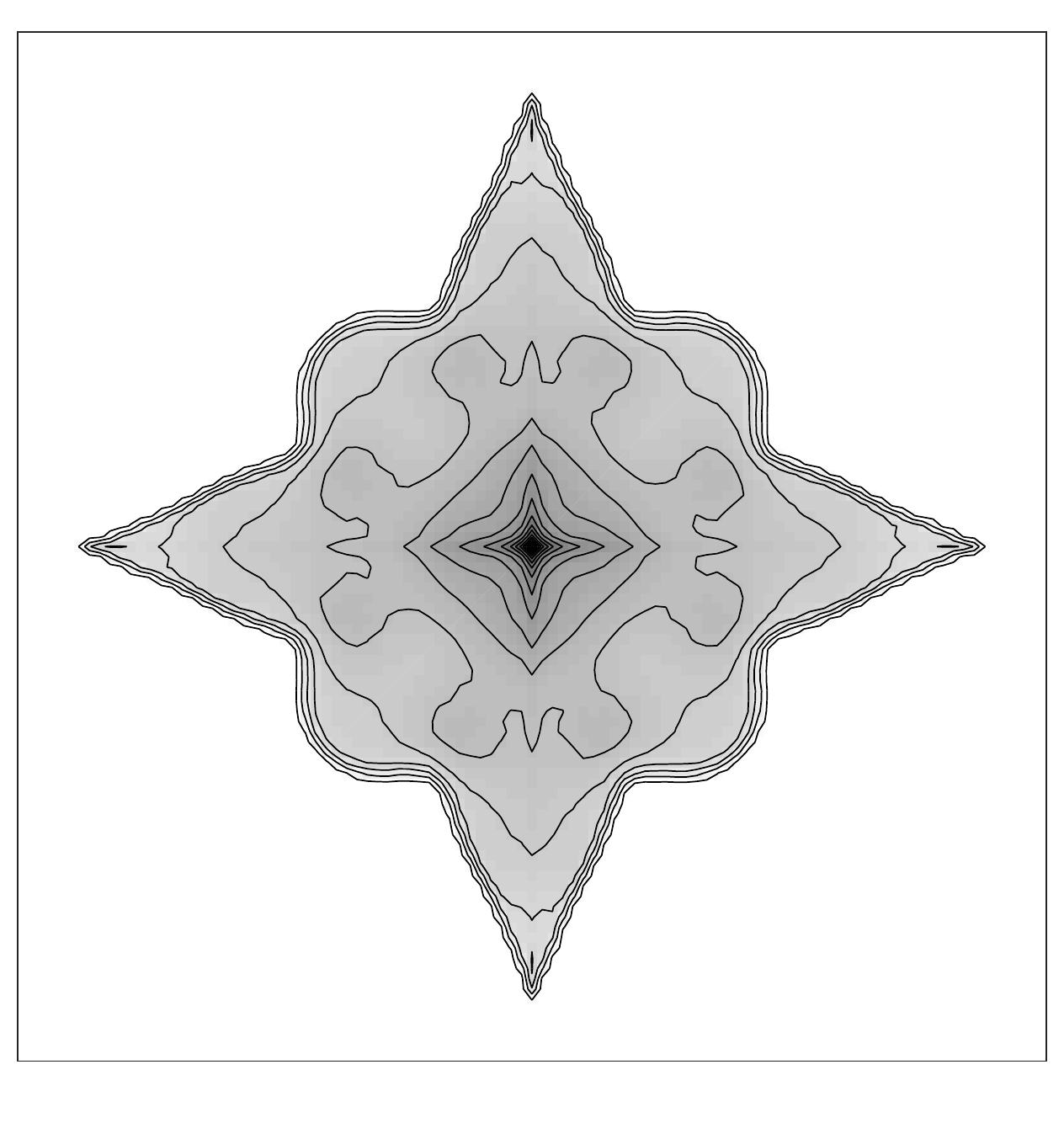}};
  \draw[gray,dashed,thick] (1.05,-3.8) circle (1.01cm);
  \node (ib_4) at (1.05,-3.8) {\tiny $\textcolor{white}{\star}$};
  \node[anchor=south west,inner sep=0, rotate=45] at (1.115,-8.08) {\includegraphics[scale=0.162]{./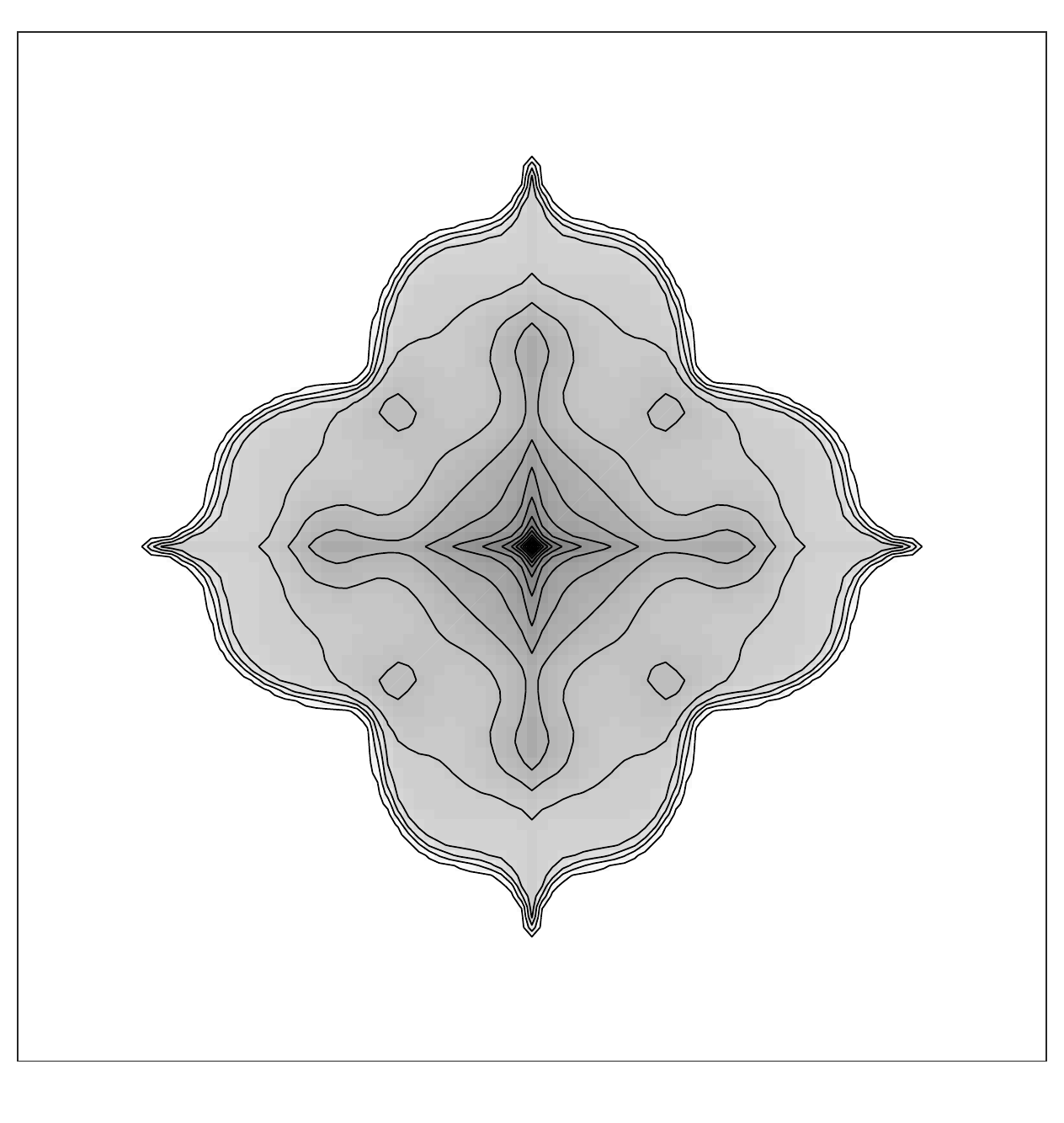}};
  \draw[gray,dashed,thick] (1.05,-6.54) circle (1.01cm);
  \node (ib_4) at (1.05,-6.54) {\tiny $\textcolor{white}{\star}$};
  \node (ib_4) at (1.,2.8) {\small 1D-PPU};
  \node (ib_4) at (-1.2,1.175) {\small $\theta = 0$};
  \node (ib_4) at (-1.2,-1.3) {\small $\theta = \pi/12$};
  \node (ib_4) at (-1.2,-3.9) {\small $\theta = \pi/8$};
  \node (ib_4) at (-1.2,-6.55) {\small $\theta = \pi/4$};
  \end{tikzpicture}
  \hspace{-0.2cm}
  \begin{tikzpicture}
  \node[anchor=south west,inner sep=0] at (0,0) {\includegraphics[scale=0.162]{./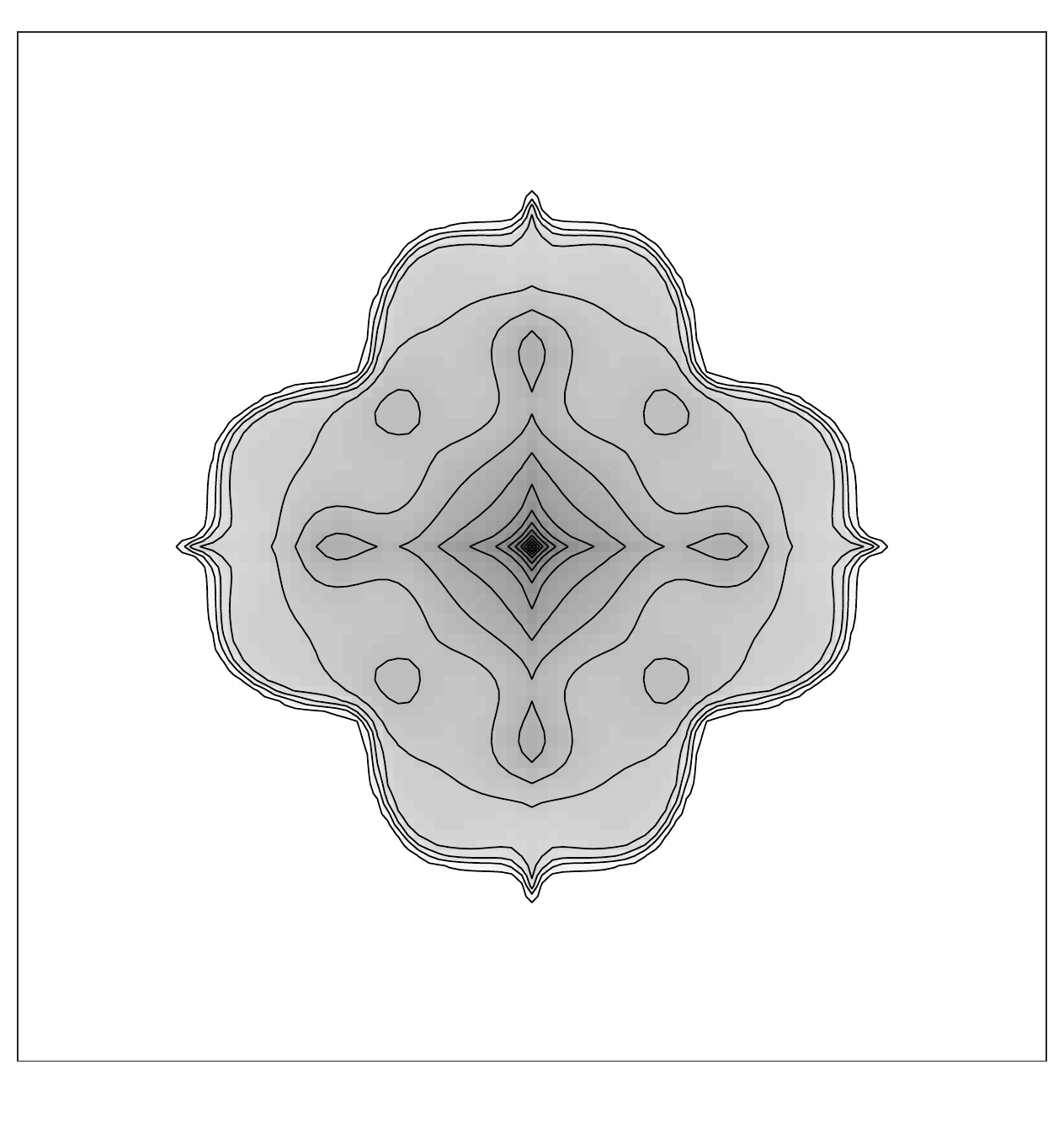}};
  \draw[gray,thick,dashed] (1.05,1.14) circle (1.01cm);
  \node (ib_4) at (1.045,1.135) {\tiny $\textcolor{white}{\star}$};
  \node[anchor=south west,inner sep=0, rotate=15] at (0.32,-2.65) {\includegraphics[scale=0.162]{./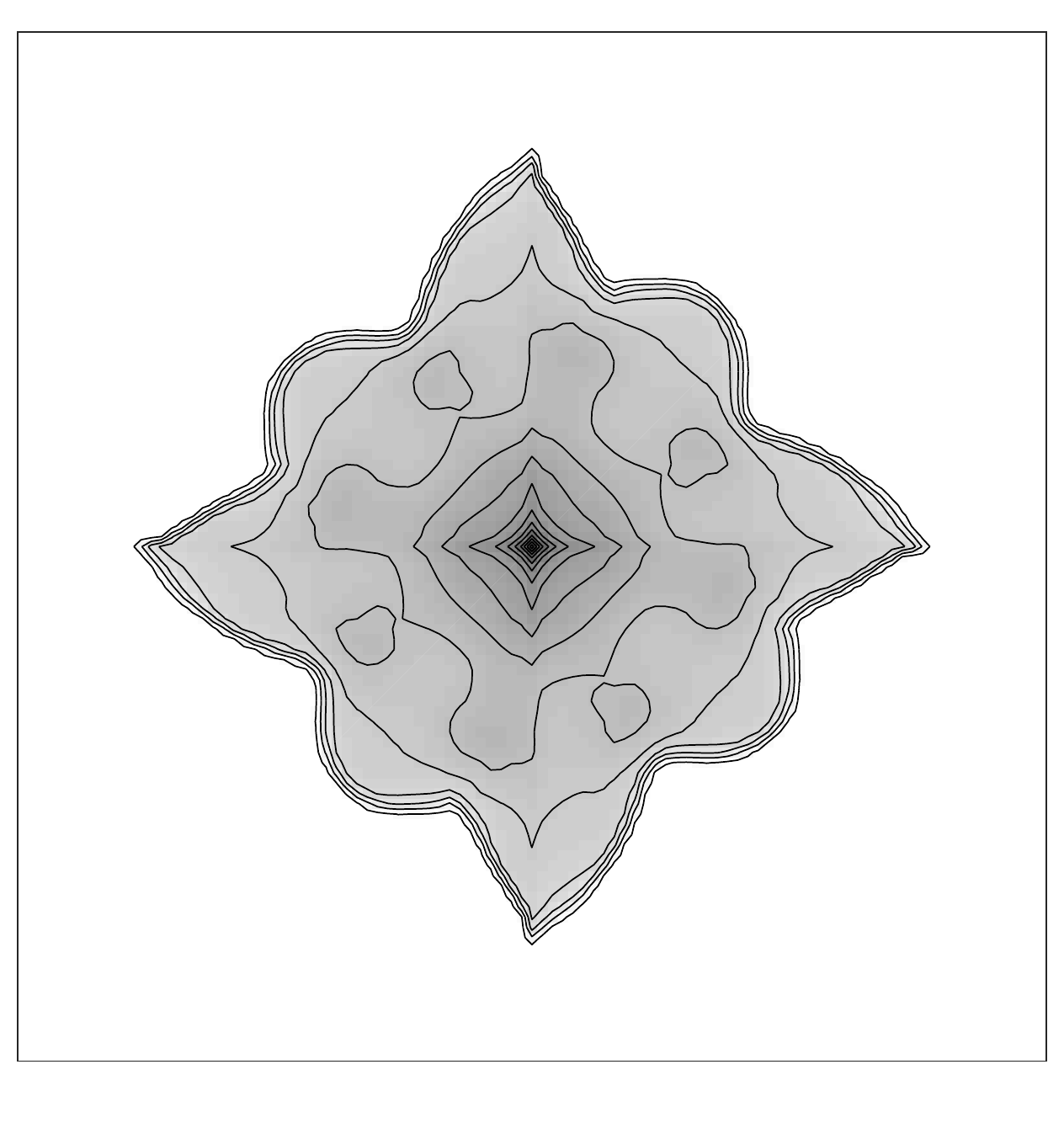}};
  \draw[gray,thick,dashed] (1.05,-1.28) circle (1.01cm);
  \node (ib_4) at (1.04,-1.28) {\tiny $\textcolor{white}{\star}$};
  \node[anchor=south west,inner sep=0, rotate=22.5] at (0.52,-5.245) {\includegraphics[scale=0.162]{./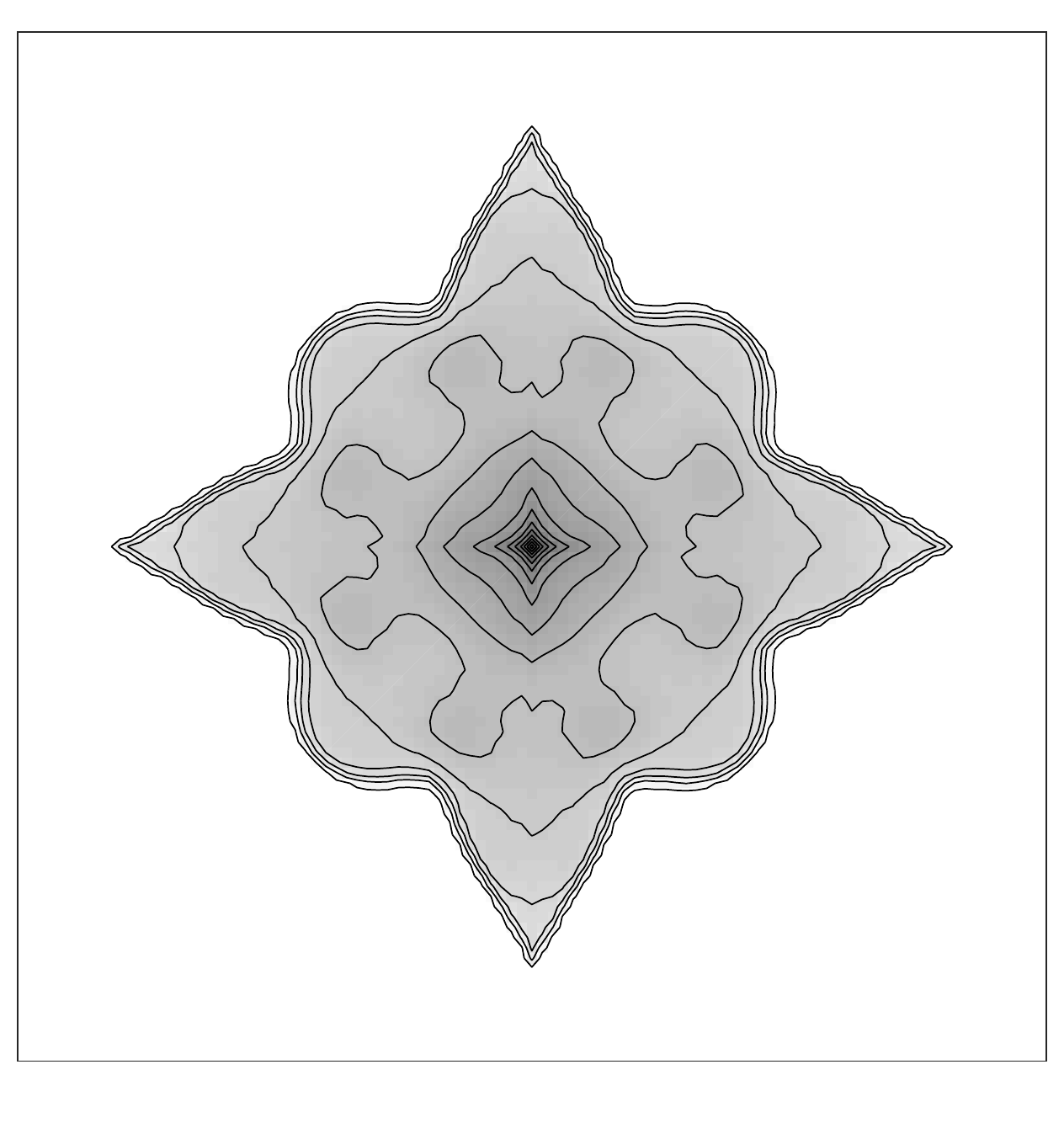}};
  \draw[gray,dashed,thick] (1.05,-3.8) circle (1.01cm);
  \node (ib_4) at (1.05,-3.8) {\tiny $\textcolor{white}{\star}$};
  \node[anchor=south west,inner sep=0, rotate=45] at (1.115,-8.08) {\includegraphics[scale=0.162]{./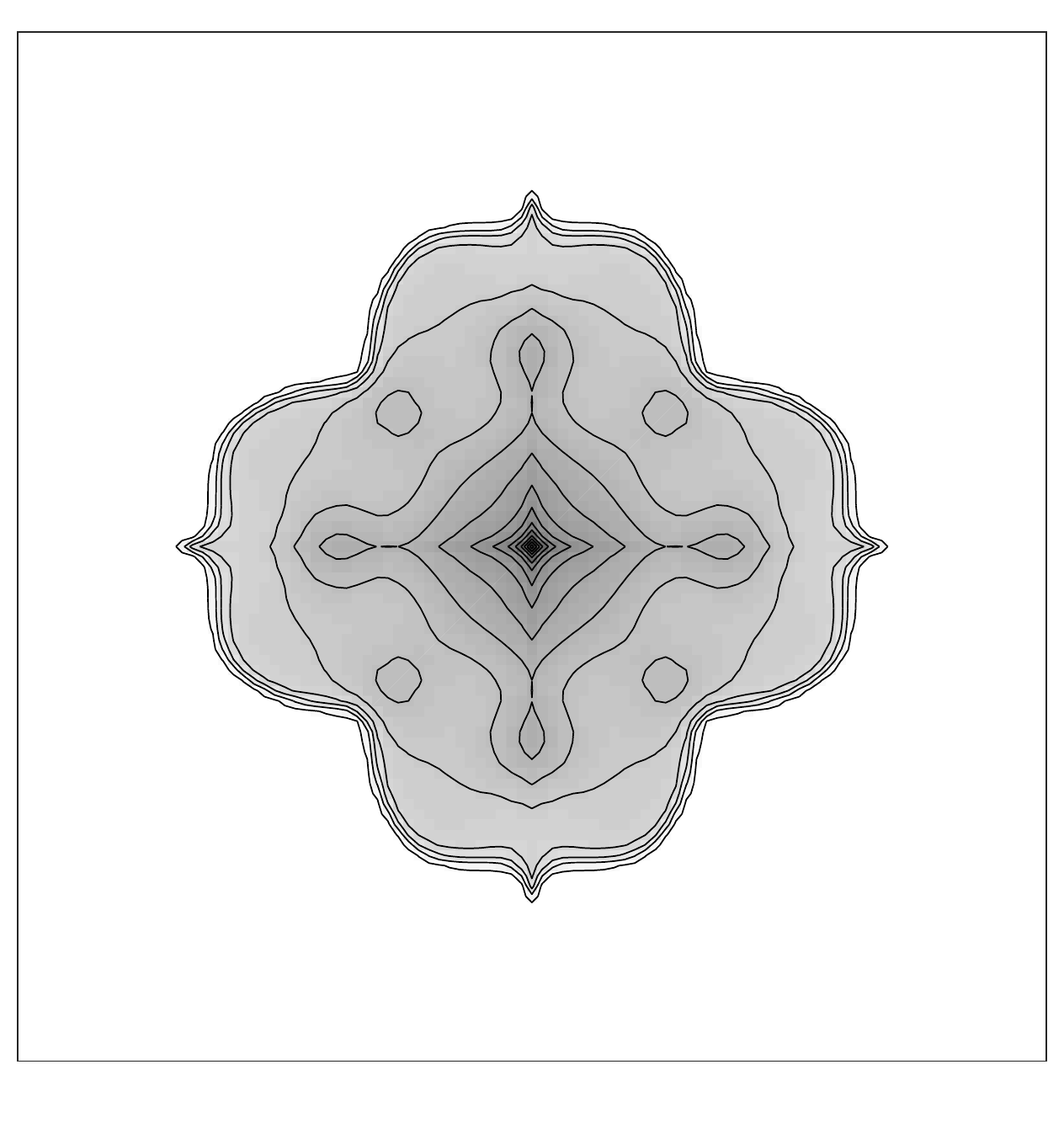}};
  \draw[gray,dashed,thick] (1.05,-6.54) circle (1.01cm);
  \node (ib_4) at (1.05,-6.54) {\tiny $\textcolor{white}{\star}$};
  \node (ib_4) at (1.,2.8) {\small 1D-IHU};
  \end{tikzpicture}
  \hspace{-0.2cm}
  \begin{tikzpicture}
  \node[anchor=south west,inner sep=0] at (0,0) {\includegraphics[scale=0.162]{./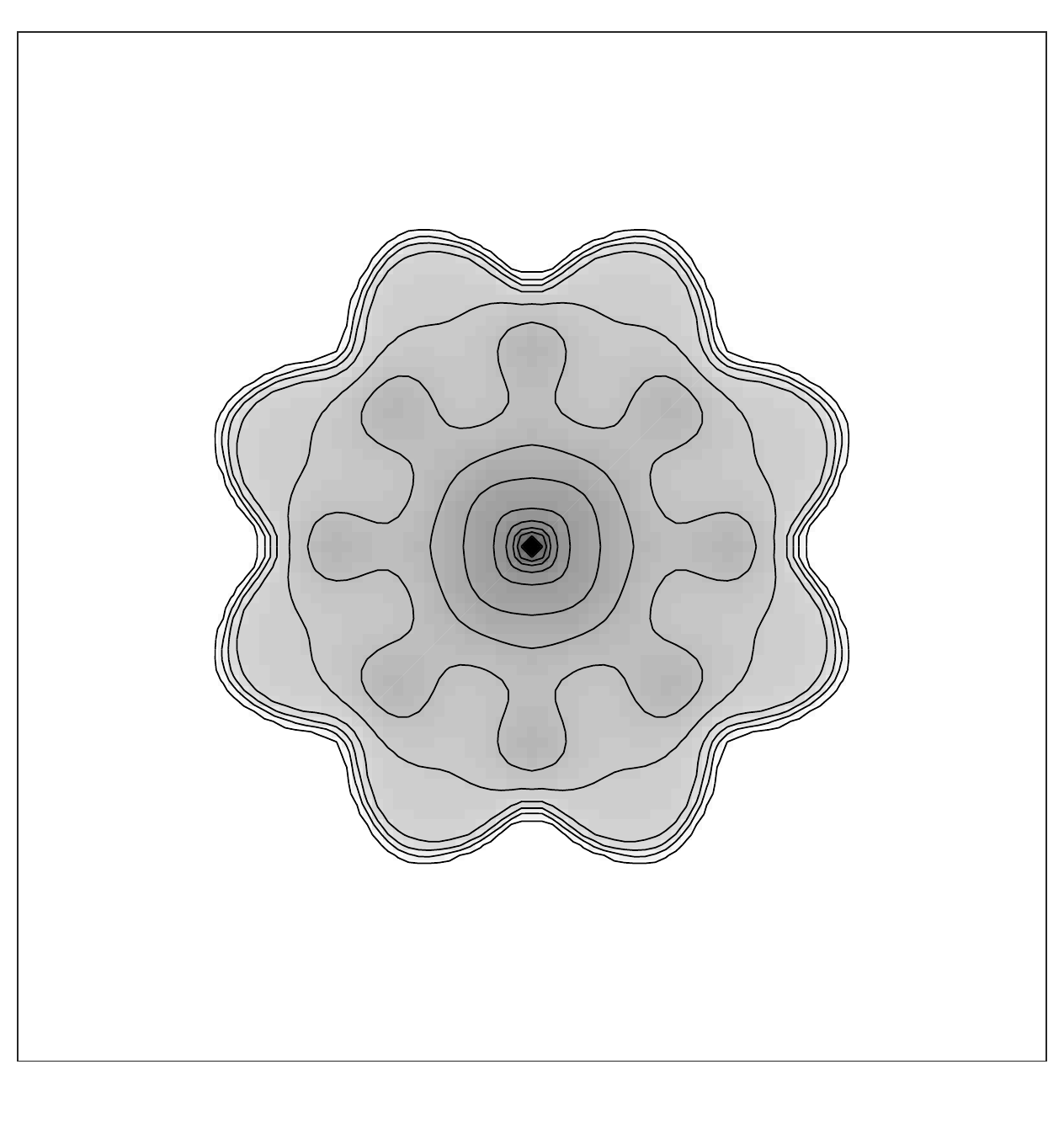}};
  \draw[gray,thick,dashed] (1.05,1.14) circle (1.01cm);
  \node (ib_4) at (1.045,1.135) {\tiny $\textcolor{white}{\star}$};
  \node[anchor=south west,inner sep=0, rotate=15] at (0.32,-2.65) {\includegraphics[scale=0.162]{./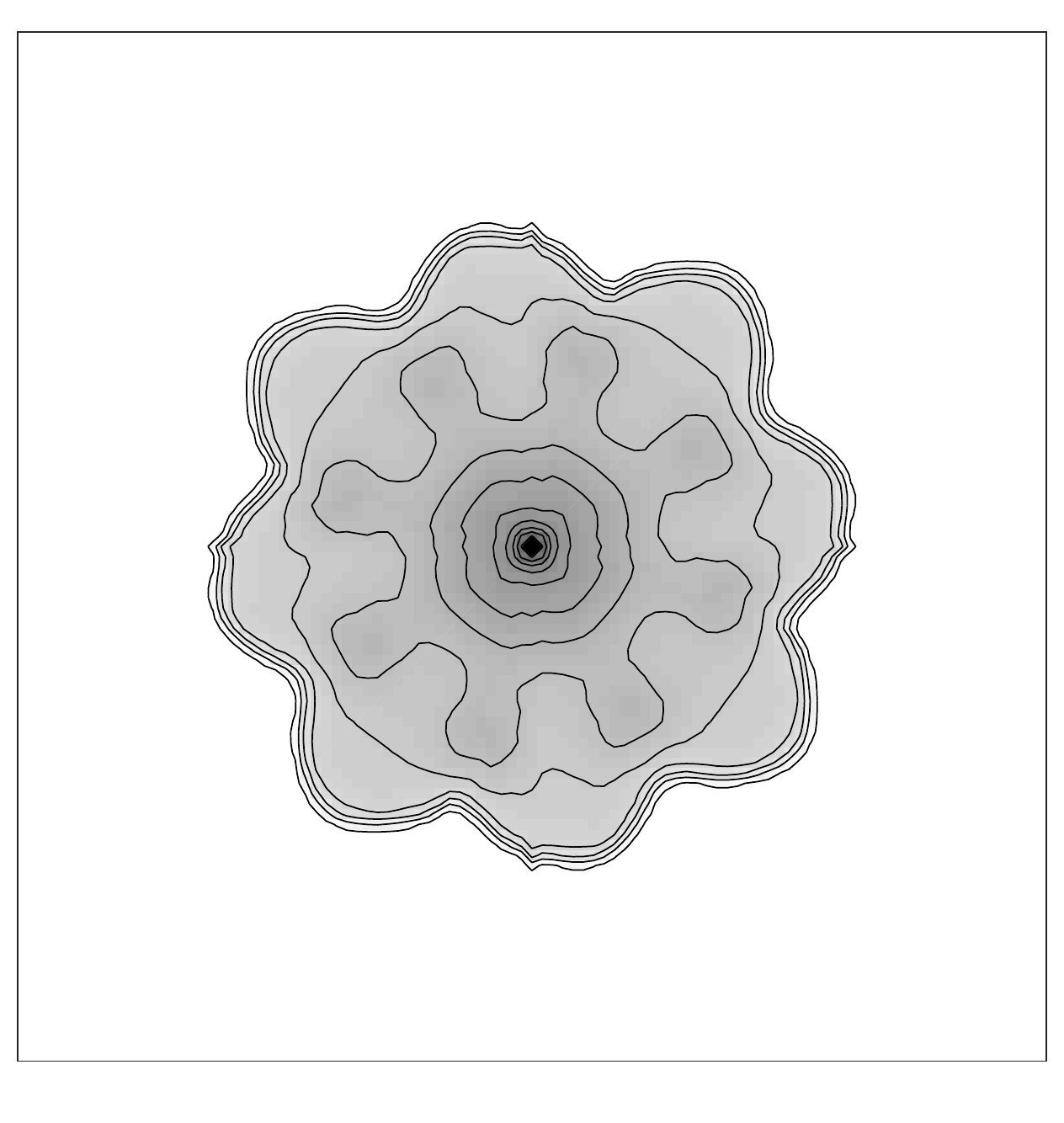}};
  \draw[gray,thick,dashed] (1.05,-1.28) circle (1.01cm);
  \node (ib_4) at (1.04,-1.28) {\tiny $\textcolor{white}{\star}$};
  \node[anchor=south west,inner sep=0, rotate=22.5] at (0.52,-5.245) {\includegraphics[scale=0.162]{./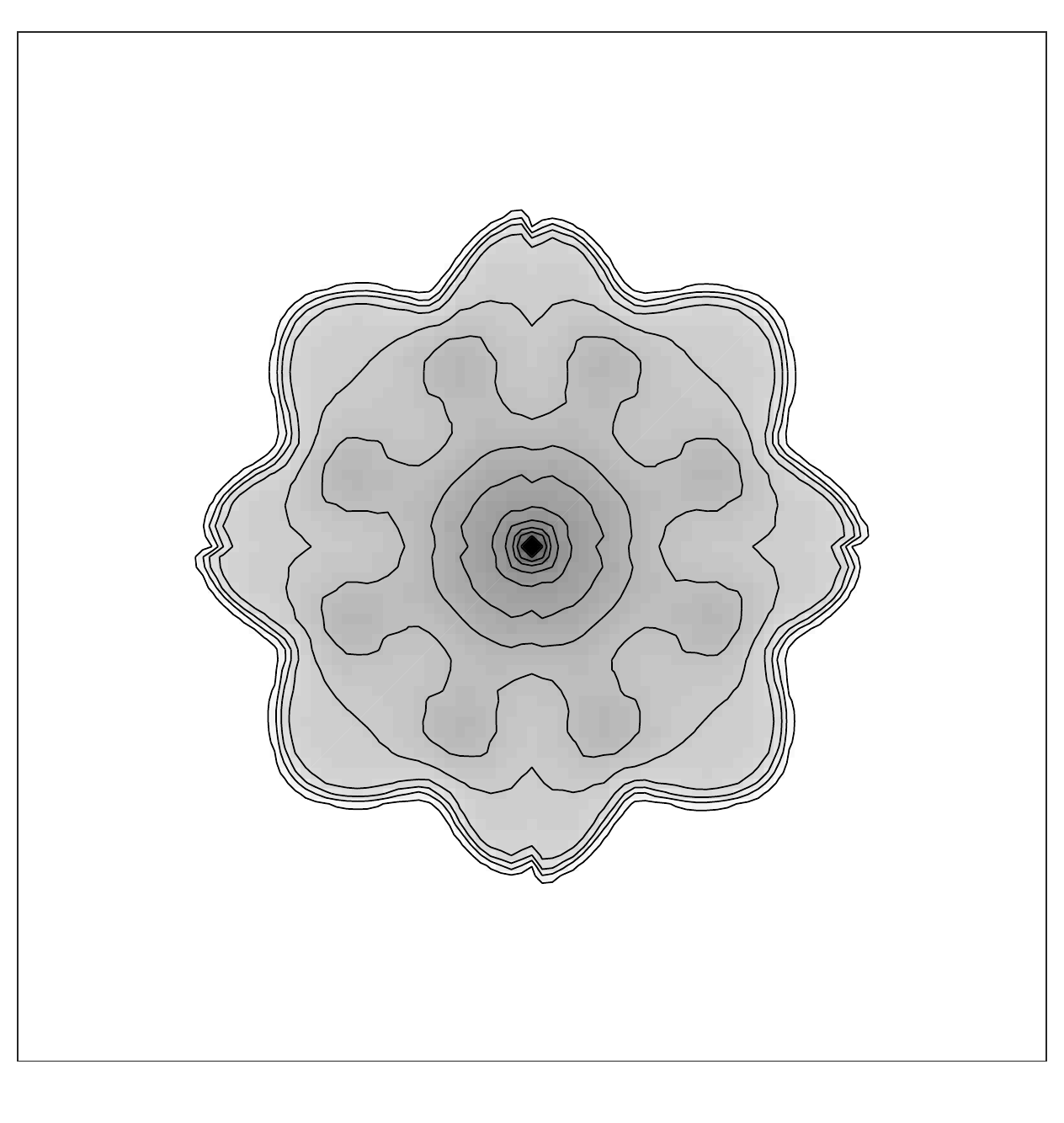}};
  \draw[gray,dashed,thick] (1.05,-3.8) circle (1.01cm);
  \node (ib_4) at (1.05,-3.8) {\tiny $\textcolor{white}{\star}$};
  \node[anchor=south west,inner sep=0, rotate=45] at (1.115,-8.08) {\includegraphics[scale=0.162]{./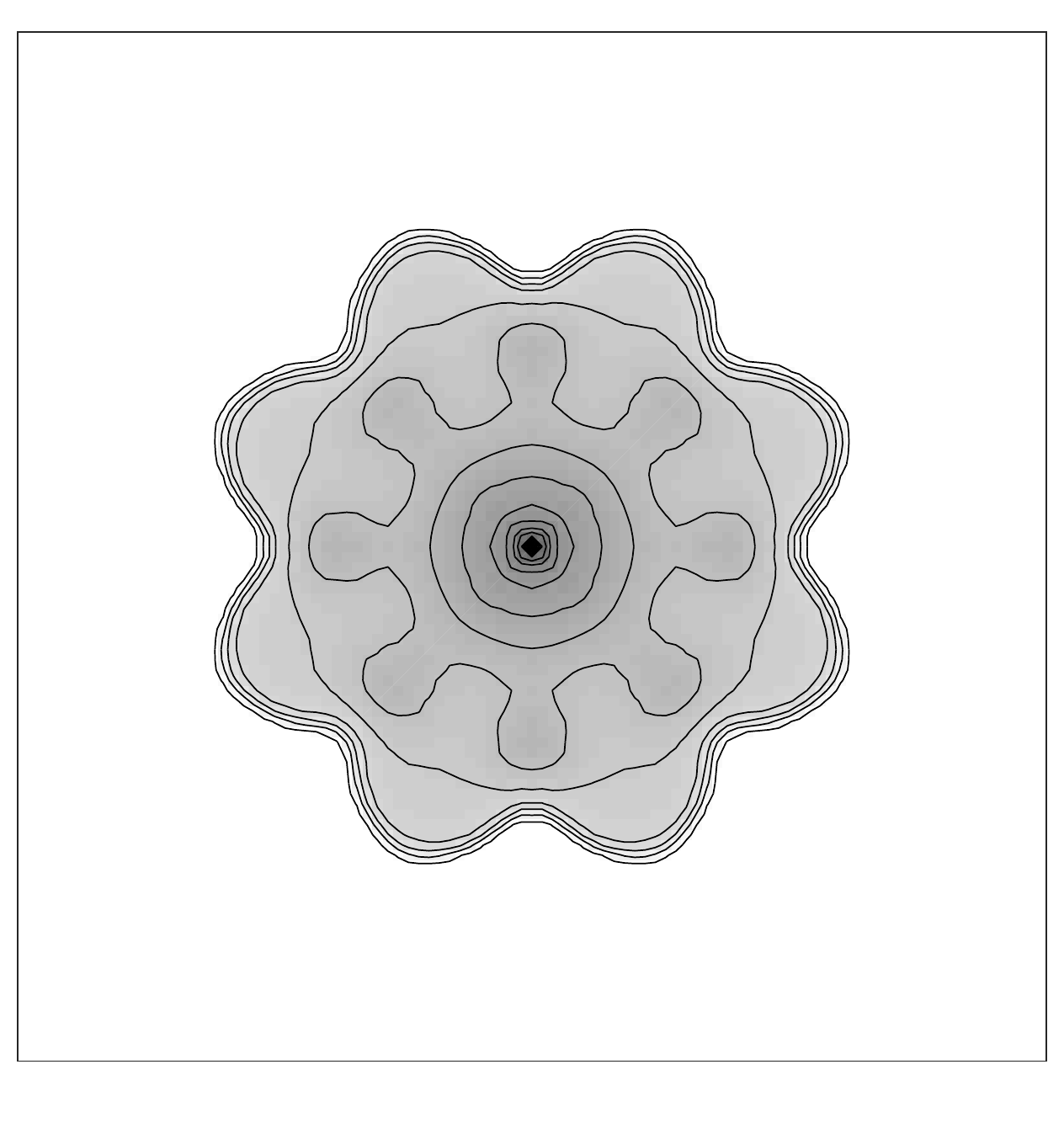}};
  \draw[gray,dashed,thick] (1.05,-6.54) circle (1.01cm);
  \node (ib_4) at (1.05,-6.54) {\tiny $\textcolor{white}{\star}$};
  \node (ib_4) at (1.05,2.8) {\small MultiD-PPU};
  \node (ib_4) at (1.05,2.451) {\small (SMU)};
  \end{tikzpicture}
  \hspace{-0.2cm}
  \begin{tikzpicture}
  \node[anchor=south west,inner sep=0] at (0,0) {\includegraphics[scale=0.162]{./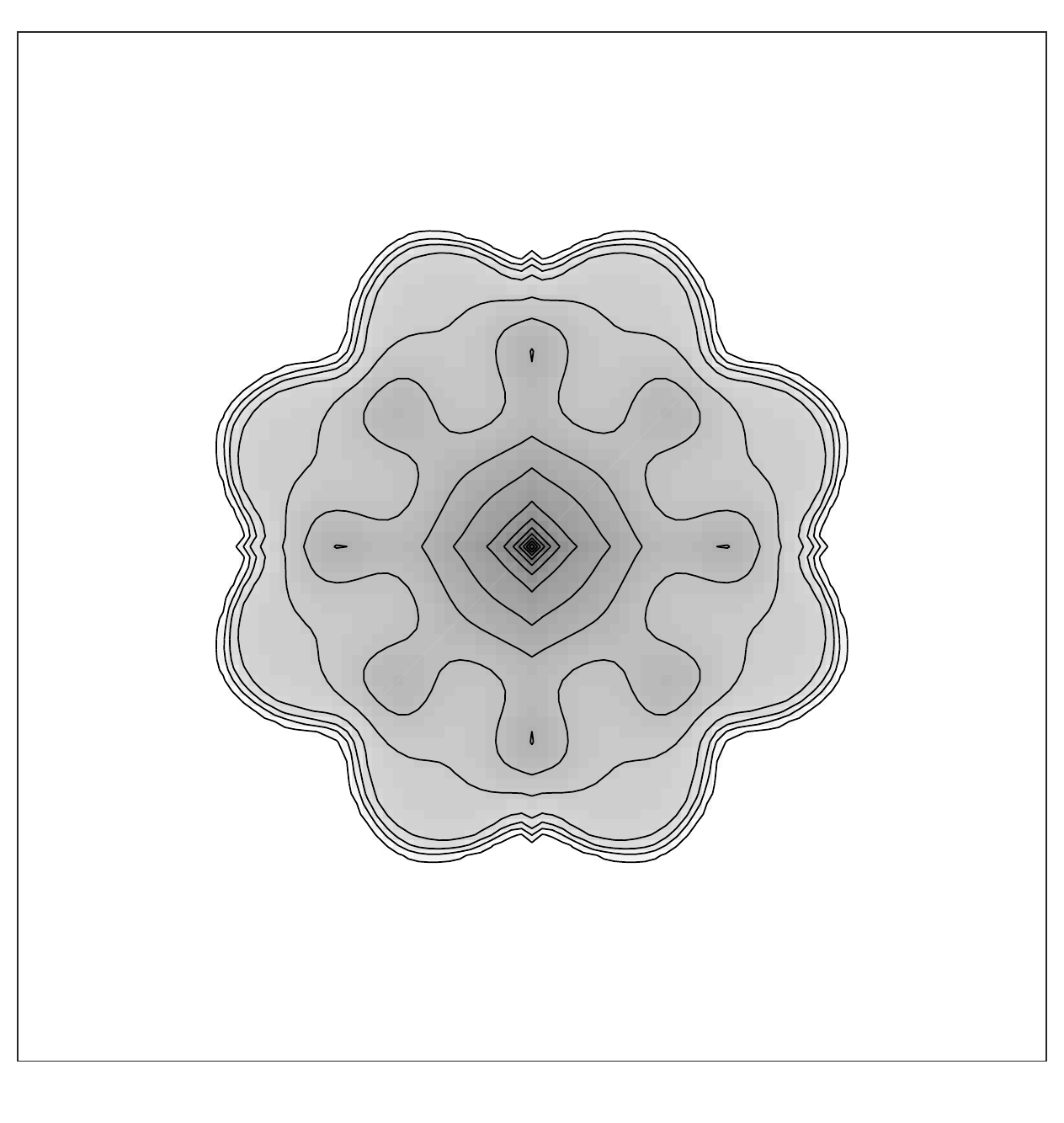}};
  \draw[gray,thick,dashed] (1.05,1.14) circle (1.01cm);
  \node (ib_4) at (1.045,1.135) {\tiny $\textcolor{white}{\star}$};
  \node[anchor=south west,inner sep=0, rotate=15] at (0.32,-2.65) {\includegraphics[scale=0.162]{./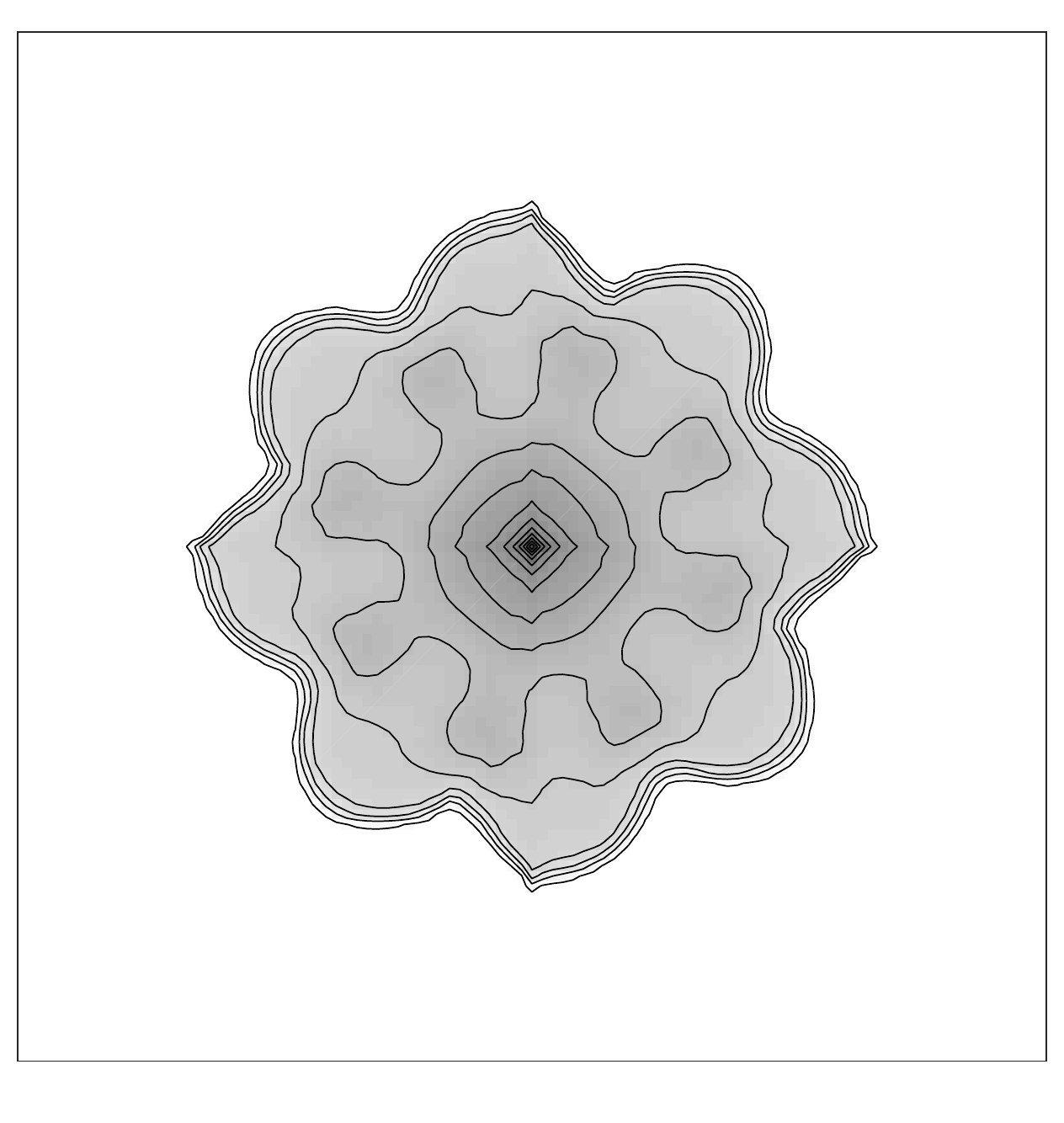}};
  \draw[gray,thick,dashed] (1.05,-1.28) circle (1.01cm);
  \node (ib_4) at (1.04,-1.28) {\tiny $\textcolor{white}{\star}$};
  \node[anchor=south west,inner sep=0, rotate=22.5] at (0.52,-5.245) {\includegraphics[scale=0.162]{./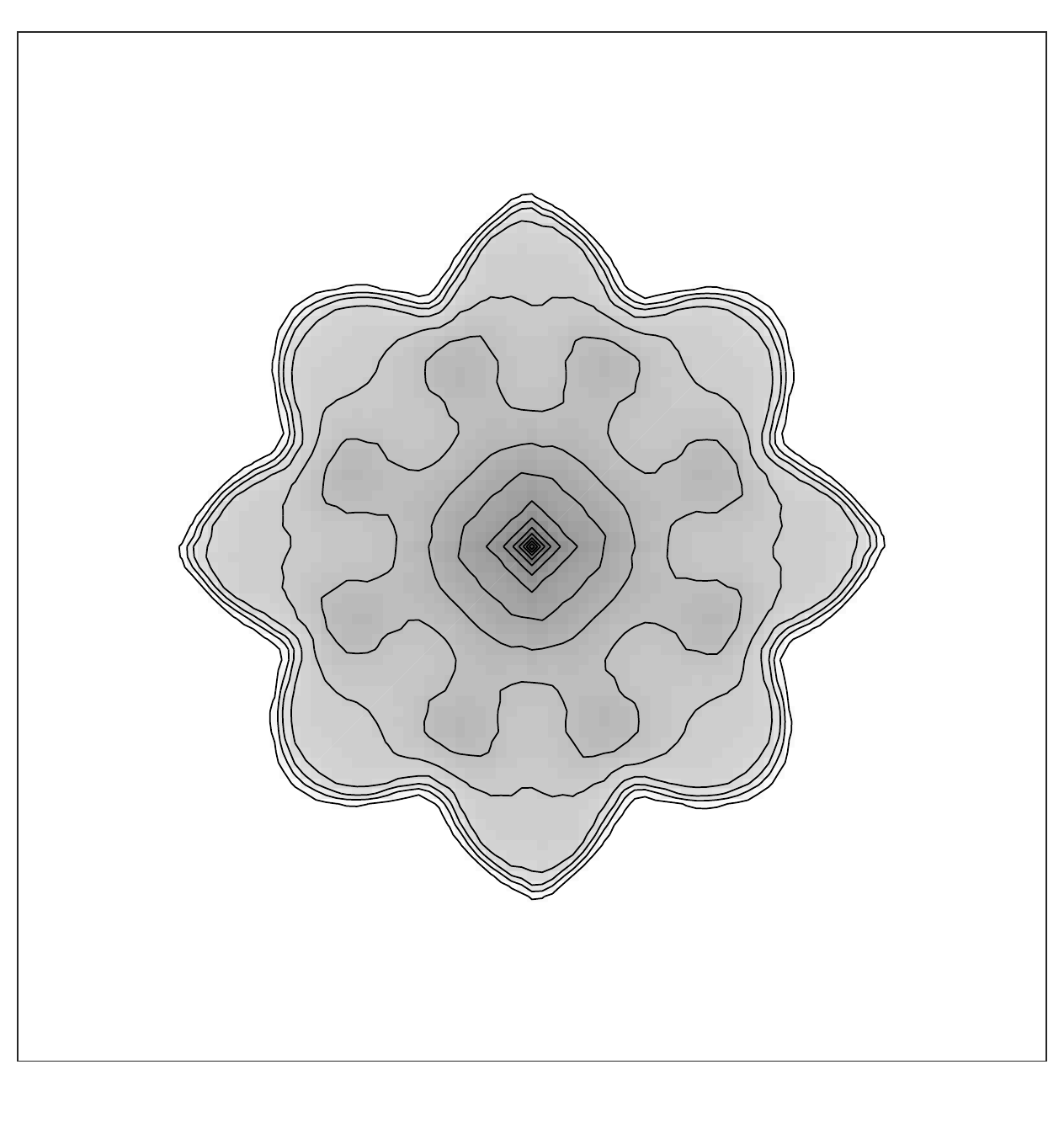}};
  \draw[gray,dashed,thick] (1.05,-3.8) circle (1.01cm);
  \node (ib_4) at (1.05,-3.8) {\tiny $\textcolor{white}{\star}$};
  \node[anchor=south west,inner sep=0, rotate=45] at (1.115,-8.08) {\includegraphics[scale=0.162]{./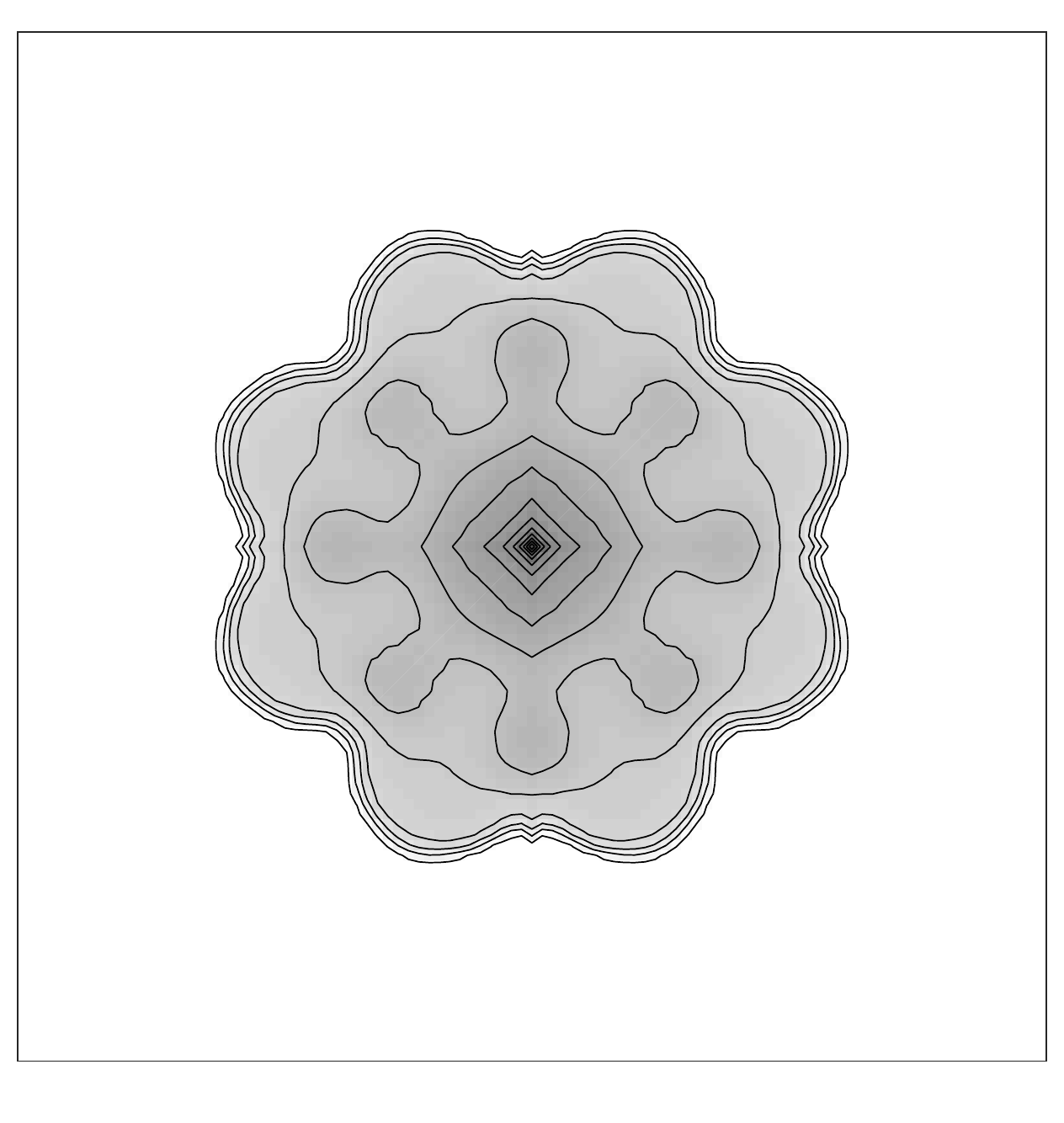}};
  \draw[gray,dashed,thick] (1.05,-6.54) circle (1.01cm);
  \node (ib_4) at (1.05,-6.54) {\tiny $\textcolor{white}{\star}$};
  \node (ib_4) at (1.05,2.8) {\small MultiD-IHU};
  \node (ib_4) at (1.05,2.451) {\small (SMU4)};
\end{tikzpicture}
  \caption{\label{fig:saturation_maps_second_numerical_example}Saturation maps for different values of the angle $\theta$
    after 0.06 PVI for the heterogeneous two-phase problem. The CFL number of these simulations is 21.1.
    The white stars show the location of the injector.
  }
\end{figure}

The nonlinear behavior of the four schemes is detailed in Table \ref{tab:nonlinear_behavior_heterogeneous_injection} for two constant
time step sizes, namely $\Delta t \approx 0.0021 \, \text{PVI}$ corresponding to CFL number of 21.1, and $\Delta t \approx 0.01 \, \text{PVI}$
for a CFL number of 106.1. The results are consistent with those of Section \ref{subsection_three_well_problem_with_buoyancy}. Specifically,
MultiD-IHU is the most efficient scheme in terms of Newton iterations, with a total reduction of 10.4\% compared to 1D-PPU for short
time steps, and of 13.7\% for larger time steps.

\begin{table}[!ht]
\scalebox{0.85}{
\centering
         \begin{tabular}{ccccccccc}
           \\ \toprule
            Angle     & \multicolumn{4}{c}{Small CFL}        & \multicolumn{4}{c}{Large CFL}  \\ 
            $\theta$  & 1D-PPU & 1D-IHU & MultiD-PPU & MultiD-IHU    & 1D-PPU & 1D-IHU & MultiD-PPU & MultiD-IHU \\ \toprule
              0        &   167     &   161     &     153       &    146           &    69    &    72   &       62     &  58 \\
              $\pi/12$ &   166     &   161     &     154       &    151           &    69    &    70   &       61     &  59 \\
              $\pi/8$  &   167     &   161     &     154       &    153           &    72    &    71   &       63     &  62 \\
              $\pi/6$  &   167     &   161     &     156       &    151           &    73    &    71   &       62     &  62 \\
              $\pi/4$  &   166     &   161     &     154       &    145           &    68    &    71   &       61     &  62 \\ \bottomrule 
         \end{tabular}}
 \caption{\label{tab:nonlinear_behavior_heterogeneous_injection} Total number of Newton iterations
for different angles at $T = 0.06 \, \text{PVI}$ in the heterogeneous injection problem with buoyancy.}
\end{table}

\subsection{\label{subsection_gravity_segregation_with_low_perm_barriers}Gravity segregation with low-permeability barriers}

Finally, we consider a $x-y$ domain of dimensions $[ -75 \, \text{ft}, 75 \, \text{ft} ] \times [ - 75  \, \text{ft}, 75 \, \text{ft}]$, 
in which low-permeability layers are placed to slow down the upward migration of the lighter non-wetting phase. The domain
is tilted with an angle of $\pi/3$ in the direction that is perpendicular to the low-permeability layers. We use a uniform
Cartesian grid consisting of $101 \times 101$ control volumes. In the disc of radius 
$r_0 = (150 - \Delta x)/2 \, \text{ft}$ that occupies the center of domain, the absolute permeability is $5 \times 10^{-9} \, \text{mD}$ in 
the horizontal layers and $50 \, \text{mD}$ everywhere else. Outside the disc, the absolute permeability is also equal to 
$5 \times 10^{-9} \, \text{mD}$. The absolute permeability field is shown in Fig.~\ref{fig:permeability_map_third_numerical_example}.
We use the same technique as in Sections \ref{subsection_three_well_problem_with_buoyancy} and \ref{subsection_second_numerical_example} 
to compute the location of the low-permeability layers as we rotate the grid. 

\begin{figure}[ht]
\centering
  \begin{tikzpicture}
    \node[anchor=south west,inner sep=0] at (0,0) {\includegraphics[scale=0.25]{./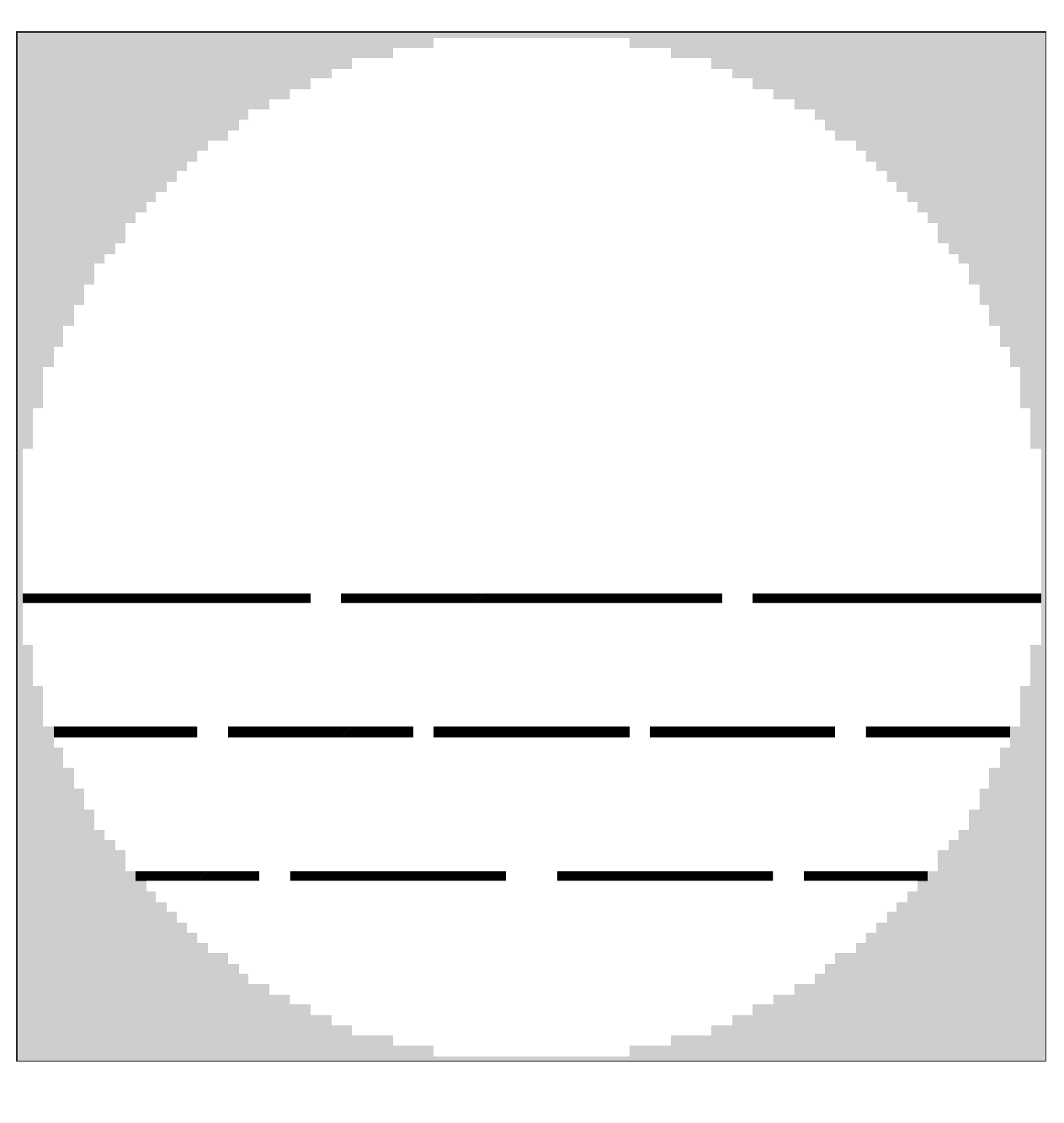}};
    \node (ib_4) at (1.6,0) {$x$};
    \node (ib_4) at (-0.2,1.75) {$y$};

    \node (a) at (4,3) {}; 
    \node (b) at (4,1.5) {}; 
    \node (ib_4) at (4.2,2.25) {$\vec{\boldsymbol{g}}$};
    \path[->,draw=black,thick] (a) edge node {} (b);  
  \end{tikzpicture}
\caption{\label{fig:permeability_map_third_numerical_example}Absolute permeability field with low-permeability
layers for an angle $\theta = 0$. Outside the disc of radius $r_0$ centered in $(x,y) = (0,0)$, the absolute permeability 
is set to $5 \times 10^{-9} \, \text{mD}$.}
\end{figure}
The non-wetting phase is initially saturating the bottom of the domain. Specifically, the initial saturation field is given by
\begin{equation}
S^{\text{init}}_{\textit{nw}}(x,y) = \left\{
\begin{array}{l l}
1  & \text{if } \, -75 < y' < -45 \, \, \text{and} \, \, \sqrt{ x^2 + y^2 } \leq r_0 \\
0  & \text{otherwise.} 
\end{array} \right.
\end{equation}
where the $y'$-axis is increasing from $-75 \, \text{ft}$ to
$75 \, \text{ft}$ when going from bottom to top. The phase densities
are given by
$\rho_{\textit{nw}} = 32 \, \text{lb}_m . \text{ft}^{-3}$ and
$\rho_w = 64 \, \text{lb}_m . \text{ft}^{-3}$.
The constant phase viscosities are chosen to be
$\mu_{\textit{nw}} = 2 \, \text{cP}$ and $\mu_w = 1 \, \text{cP}$.
Finally, we use Corey-type relative permeabilities such that
$k_{r\textit{nw}}(S) = (1-S)^2$ and $k_{rw}(S) = S^{1.5}$. 

Figure \ref{fig:saturation_maps_gravity_segregation_with_low_perm_layers} compares the saturation maps produced by the different 
schemes after 6000 days once the plume has reached the top
of the domain. As in the previous examples, the truly
multidimensional schemes drastically reduce the grid orientation
effect compared to the schemes based on two-point upwinding.
This is particularly visible in the upper part of the domain
where the shape of the plume predicted by 1D-PPU and 1D-IHU
varies significantly as the grid is rotated and becomes very
large for an angle $\theta = \pi/4$. For this example, the
shape of the plume is best preserved for all grid orientations
by MultiD-IHU.

\begin{figure}[ht]
\centering
  \begin{tikzpicture}
    \node[anchor=south west,inner sep=0] at (0,0) {\includegraphics[scale=0.162]{./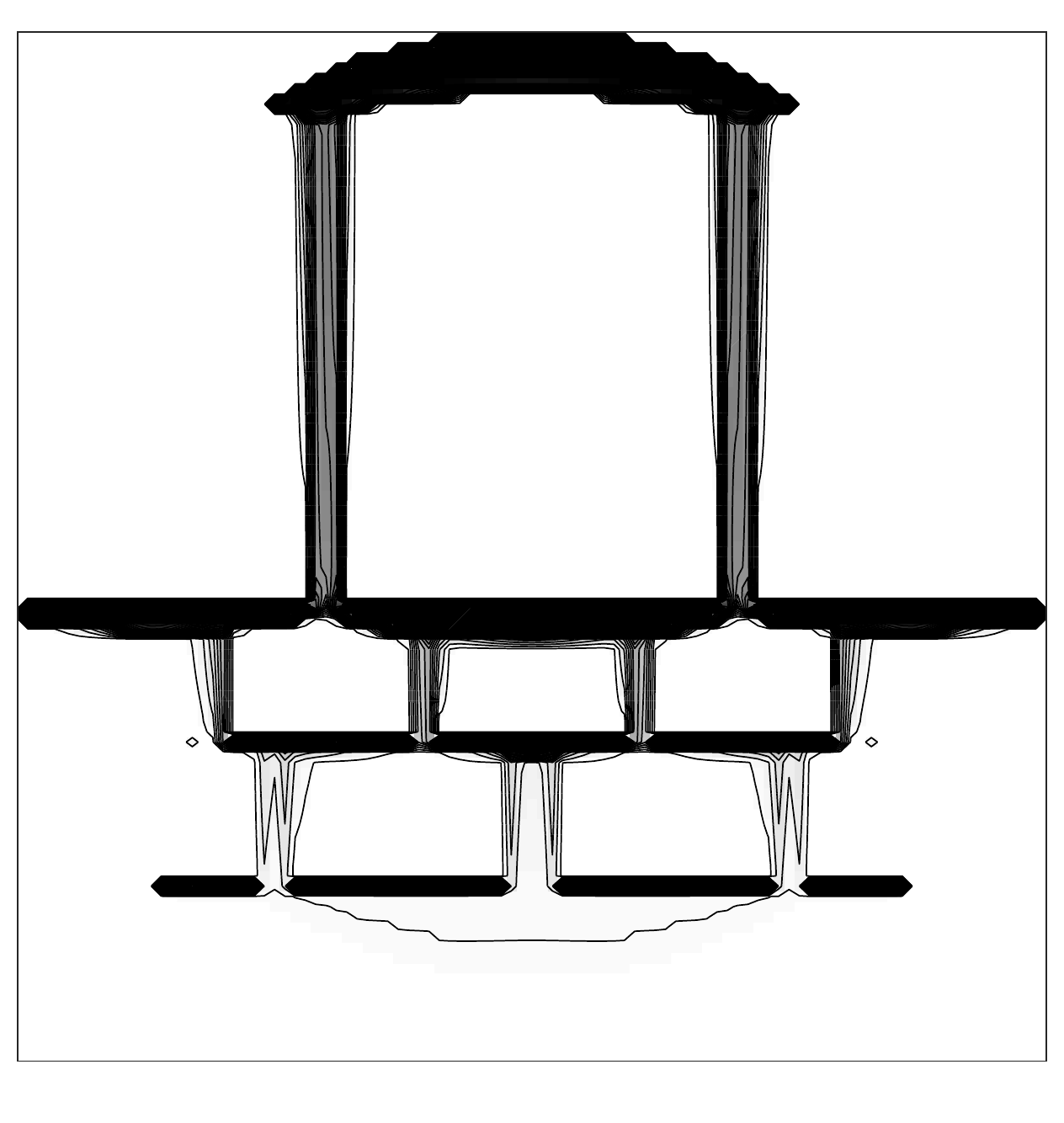}};
    \draw[gray,thick,dashed] (1.05,1.14) circle (1.01cm);
    \node[anchor=south west,inner sep=0, rotate=22.5] at (0.52,-2.945) {\includegraphics[scale=0.162]{./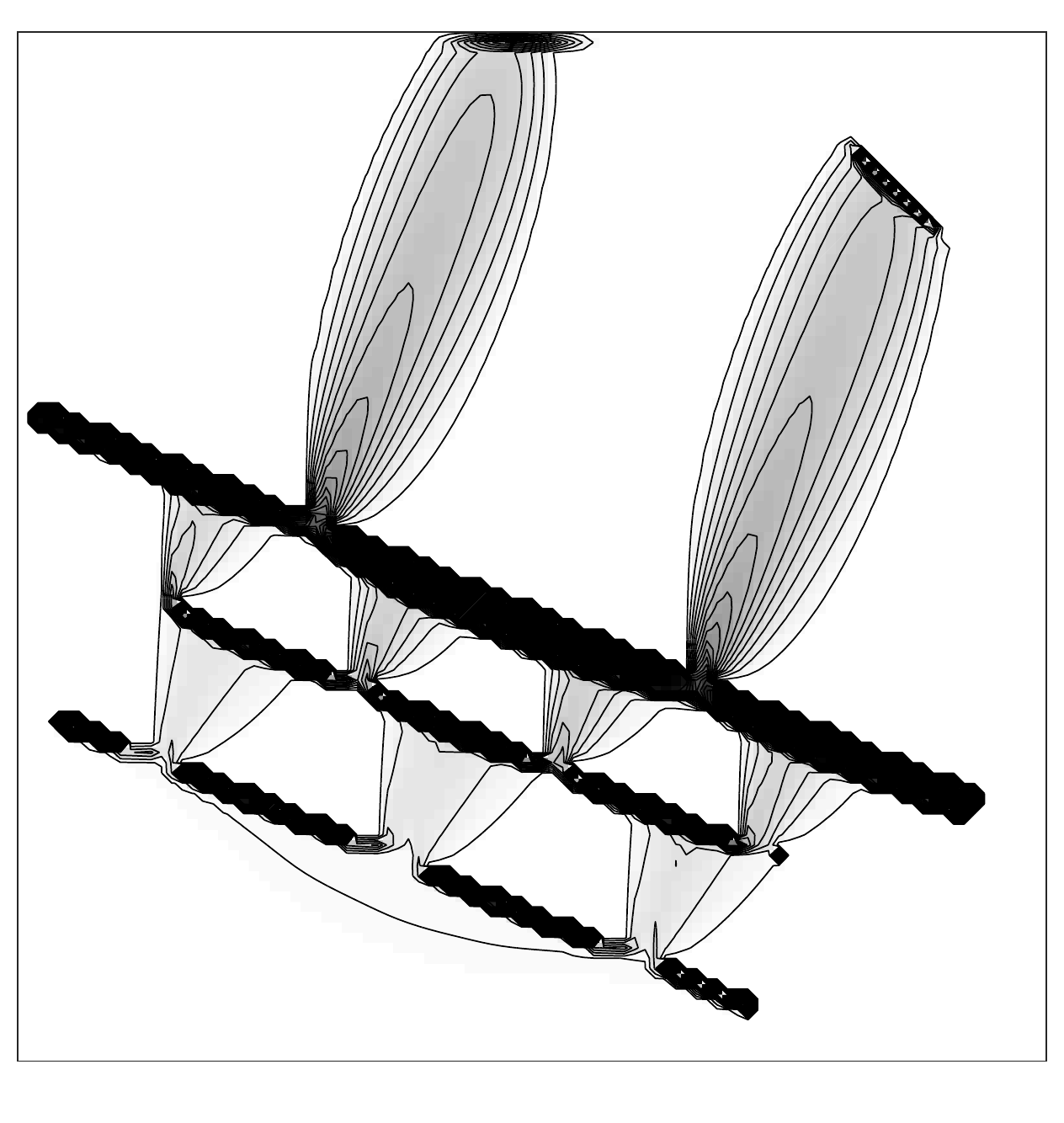}};
    \draw[gray,dashed,thick] (1.05,-1.5) circle (1.01cm);
    \node[anchor=south west,inner sep=0, rotate=45] at (1.115,-5.68) {\includegraphics[scale=0.162]{./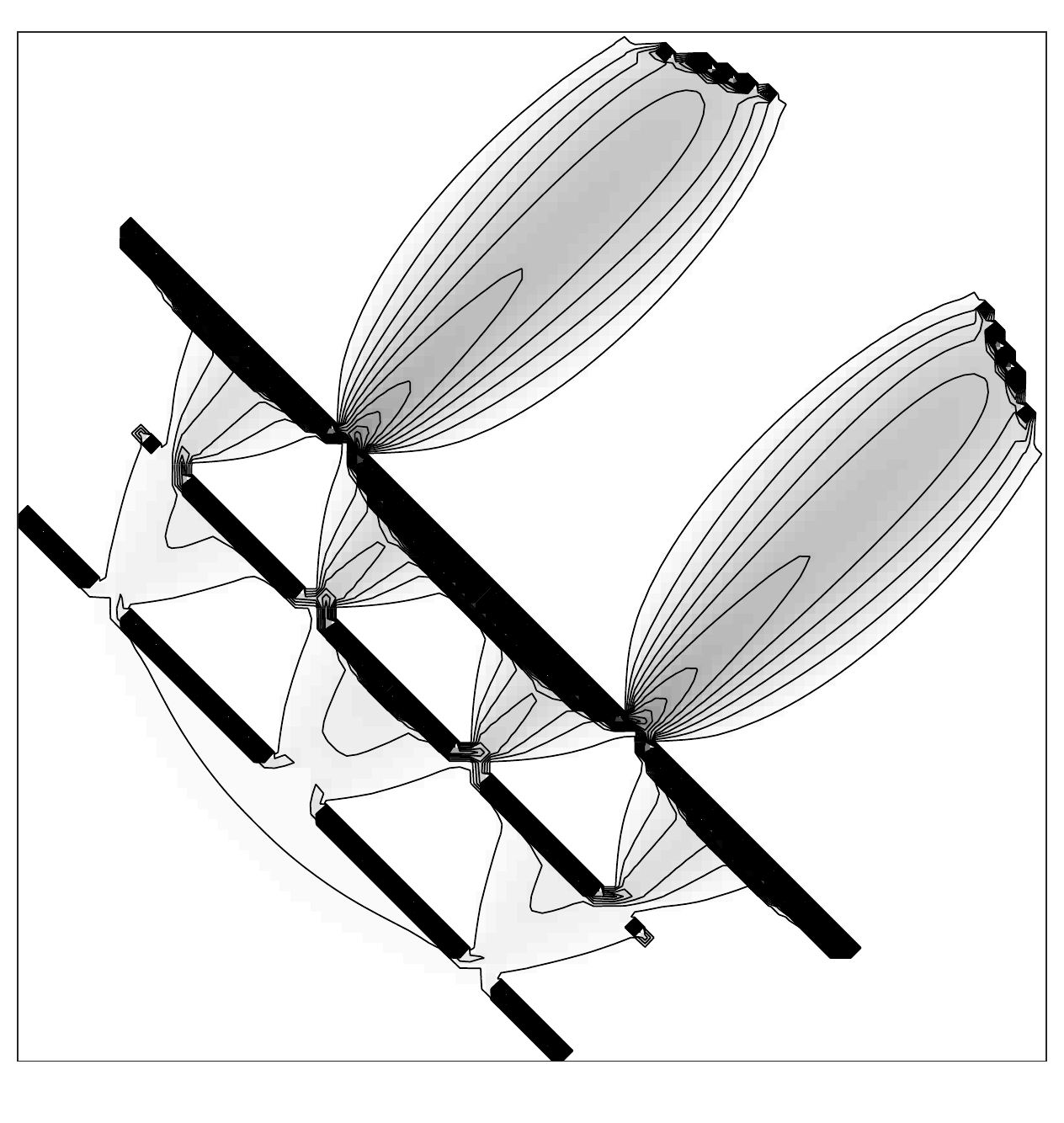}};
    \draw[gray,dashed,thick] (1.05,-4.14) circle (1.01cm);
    \node (ib_4) at (1.,2.8) {\small 1D-PPU};
    \node (ib_4) at (-1.2,1.15) {\small $\theta = 0$};
    \node (ib_4) at (-1.2,-1.45) {\small $\theta = \pi/8$};
    \node (ib_4) at (-1.2,-4.05) {\small $\theta = \pi/4$};
  \end{tikzpicture}
  \hspace{-0.2cm}
  \begin{tikzpicture}
    \node[anchor=south west,inner sep=0] at (0,0) {\includegraphics[scale=0.162]{./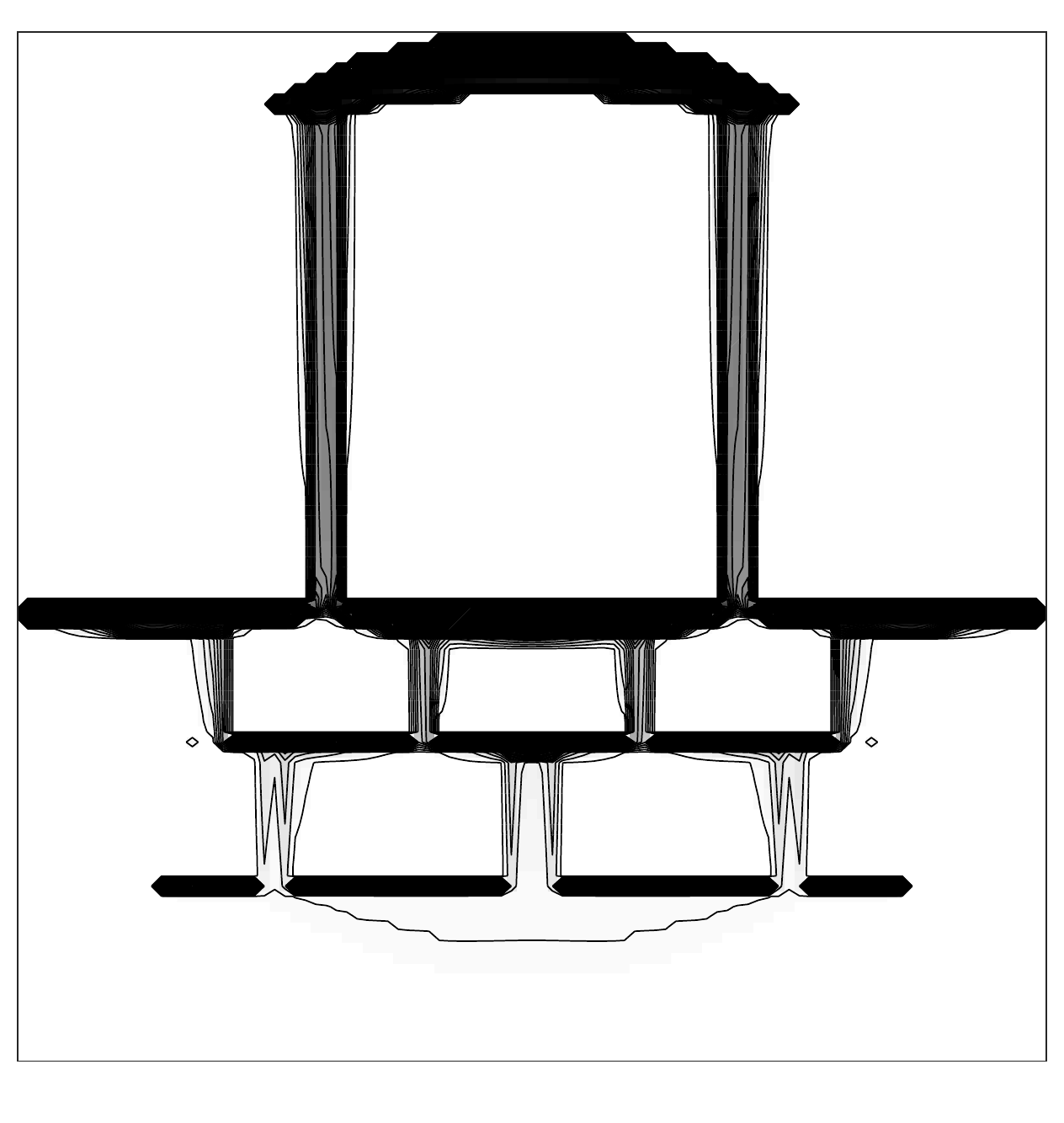}};
    \draw[gray,thick,dashed] (1.05,1.14) circle (1.01cm);
    \node[anchor=south west,inner sep=0, rotate=22.5] at (0.52,-2.945) {\includegraphics[scale=0.162]{./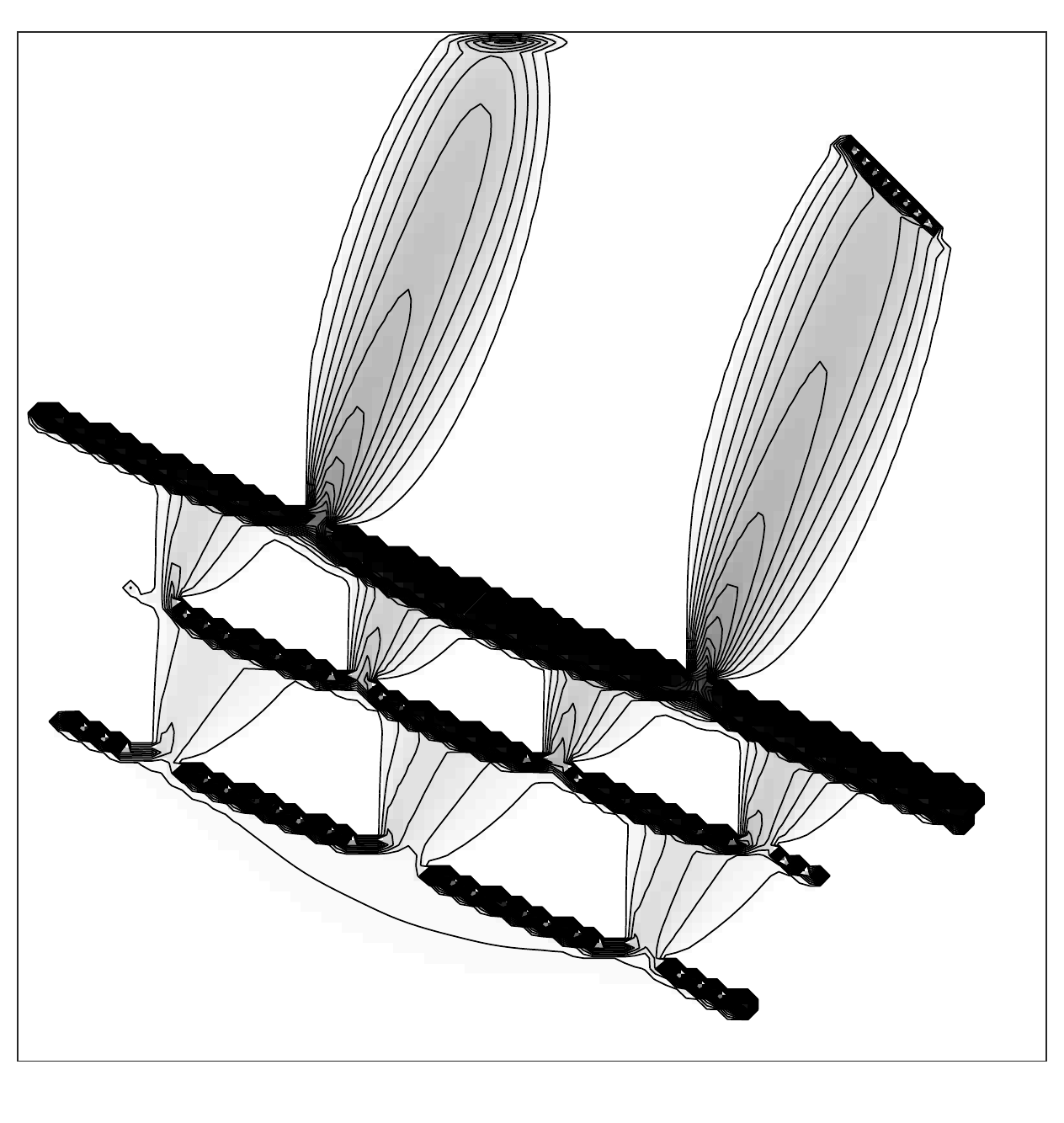}};
    \draw[gray,thick,dashed] (1.05,-1.5) circle (1.01cm);
    \node[anchor=south west,inner sep=0, rotate=45] at (1.115,-5.68) {\includegraphics[scale=0.162]{./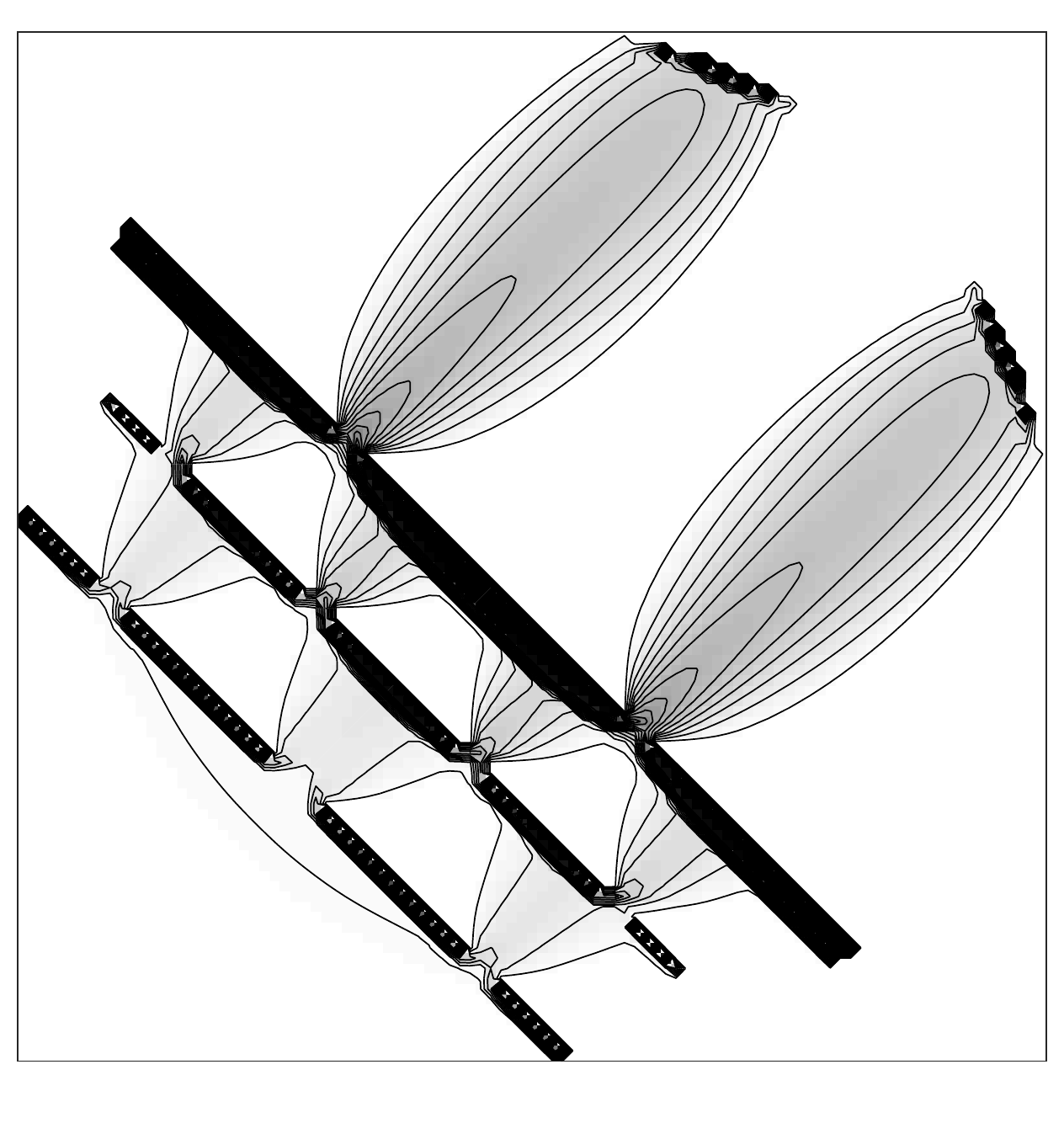}};
    \draw[gray,thick,dashed] (1.05,-4.14) circle (1.01cm);
    \node (ib_4) at (1.,2.8) {\small 1D-IHU};
  \end{tikzpicture}
  \hspace{-0.2cm}
  \begin{tikzpicture}
    \node[anchor=south west,inner sep=0] at (0,0) {\includegraphics[scale=0.162]{./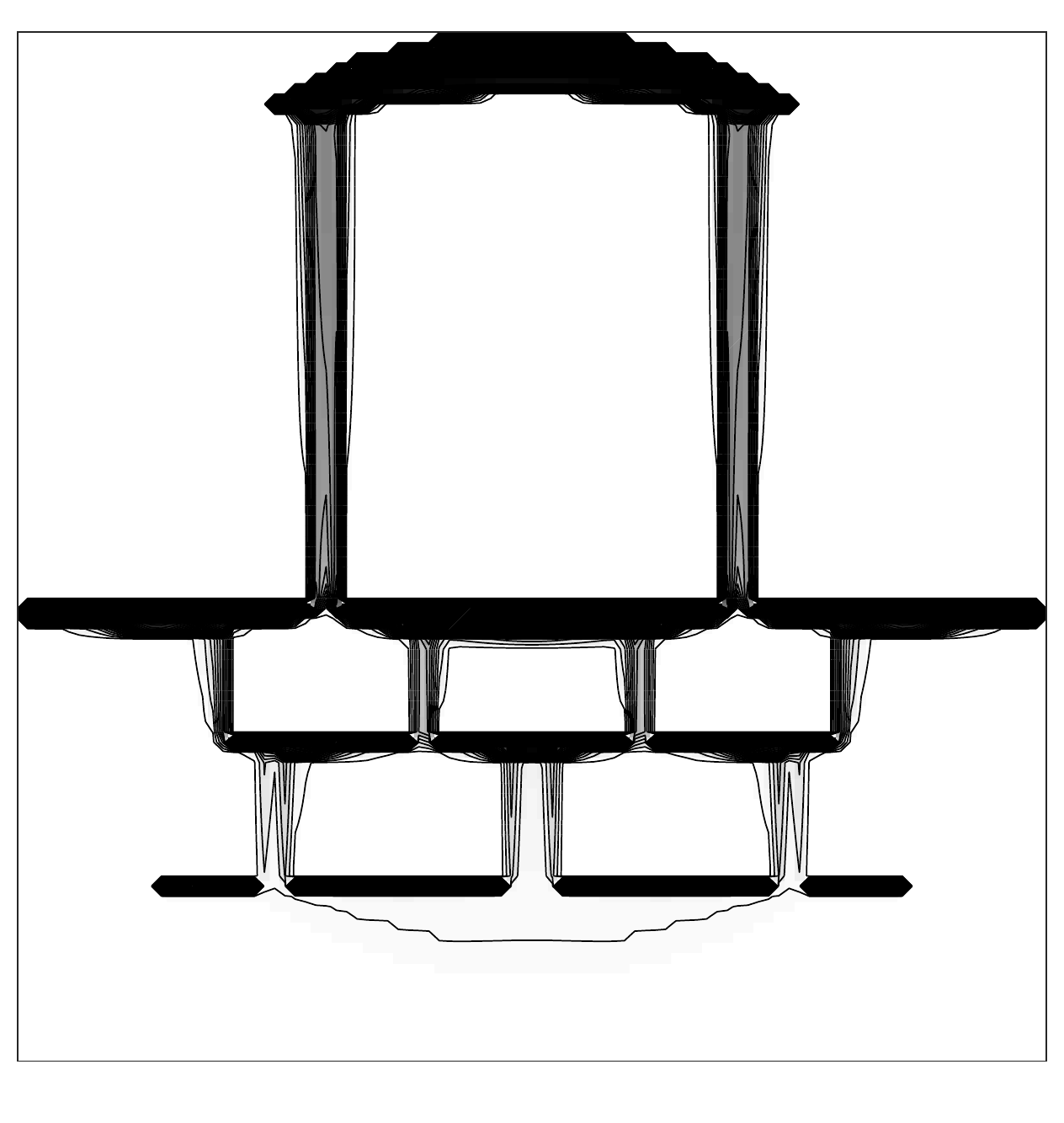}};
    \draw[gray,thick,dashed] (1.05,1.14) circle (1.01cm);
    \node[anchor=south west,inner sep=0, rotate=22.5] at (0.52,-2.945) {\includegraphics[scale=0.162]{./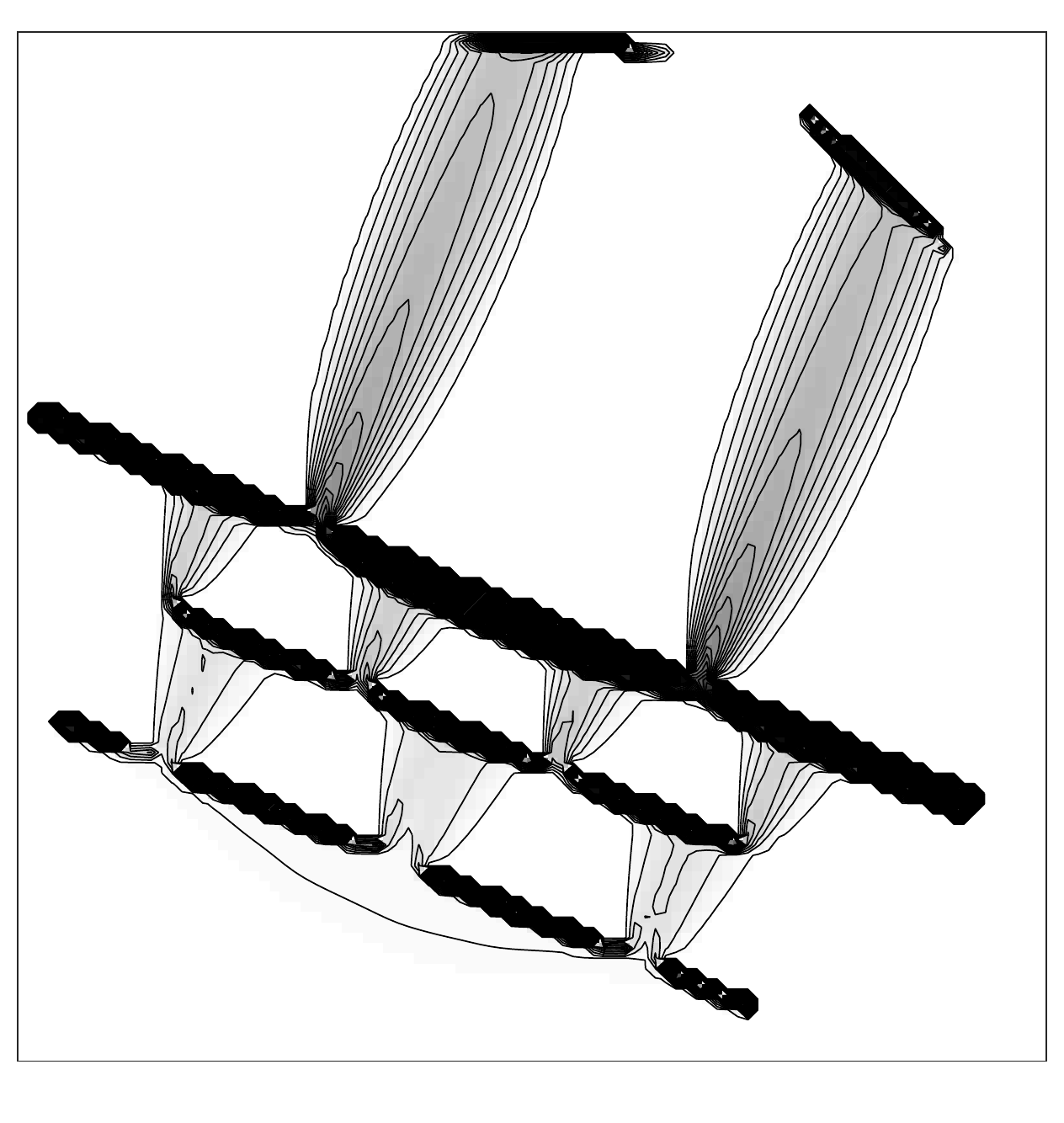}};
    \draw[gray,thick,dashed] (1.05,-1.5) circle (1.01cm);
    \node[anchor=south west,inner sep=0, rotate=45] at (1.115,-5.68) {\includegraphics[scale=0.162]{./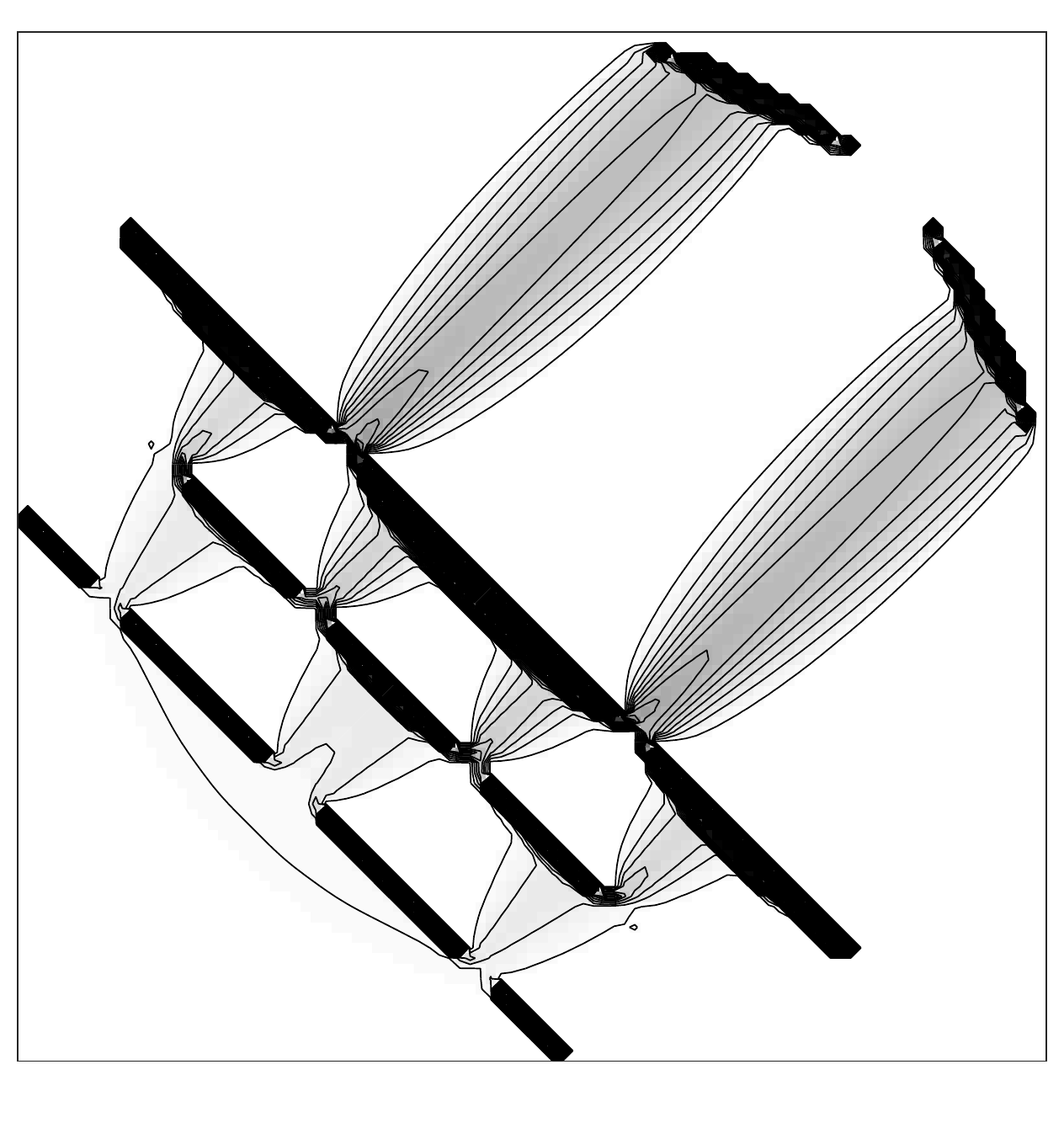}};
    \draw[gray,thick,dashed] (1.05,-4.14) circle (1.01cm);
    \node (ib_4) at (1.05,2.8) {\small MultiD-PPU};
    \node (ib_4) at (1.05,2.451) {\small (SMU)};
  \end{tikzpicture}
  \hspace{-0.2cm}
  \begin{tikzpicture}
    \node[anchor=south west,inner sep=0] at (0,0) {\includegraphics[scale=0.162]{./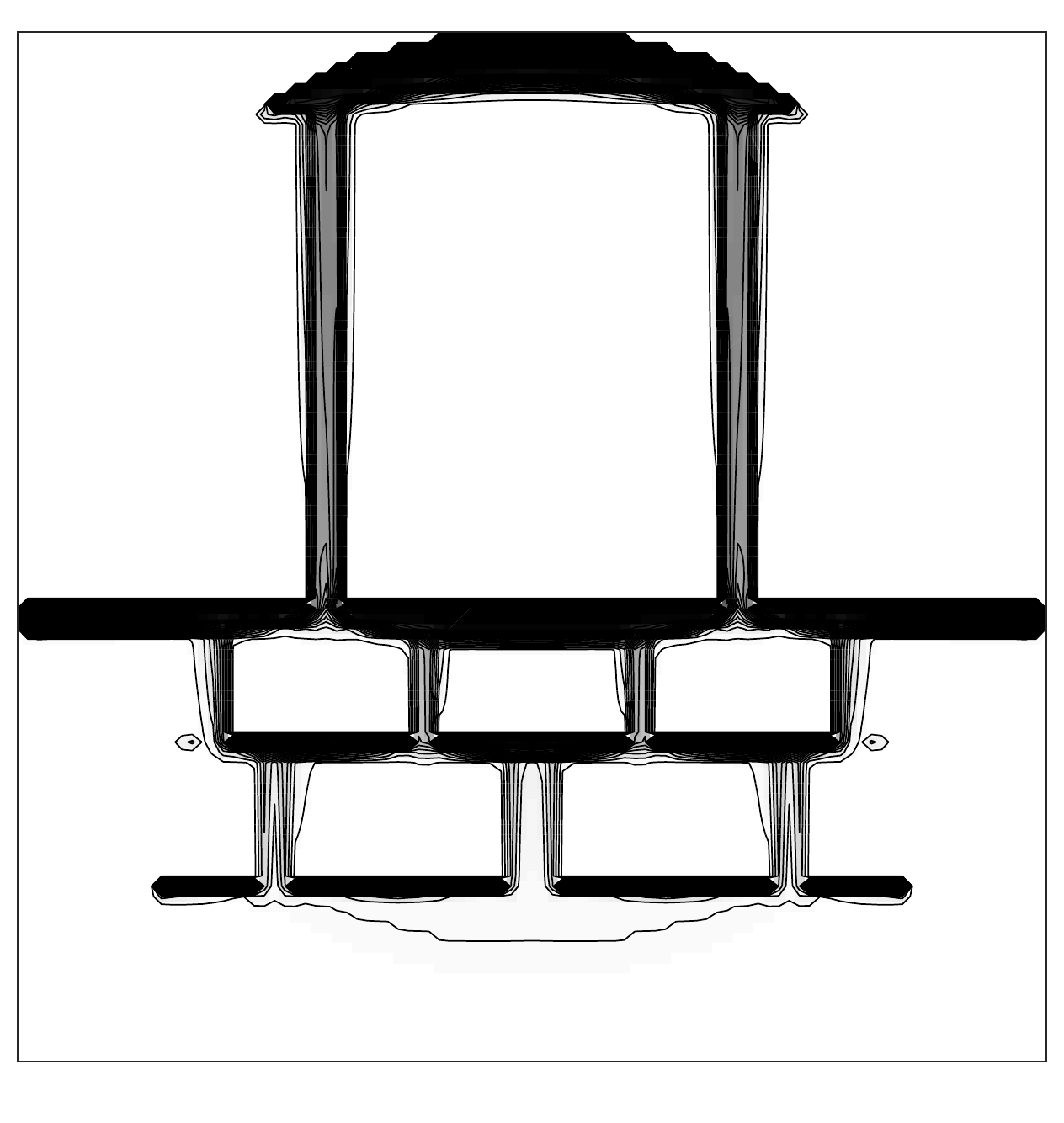}};
    \draw[gray,thick,dashed] (1.05,1.14) circle (1.01cm);
    \node[anchor=south west,inner sep=0, rotate=22.5] at (0.52,-2.945) {\includegraphics[scale=0.162]{./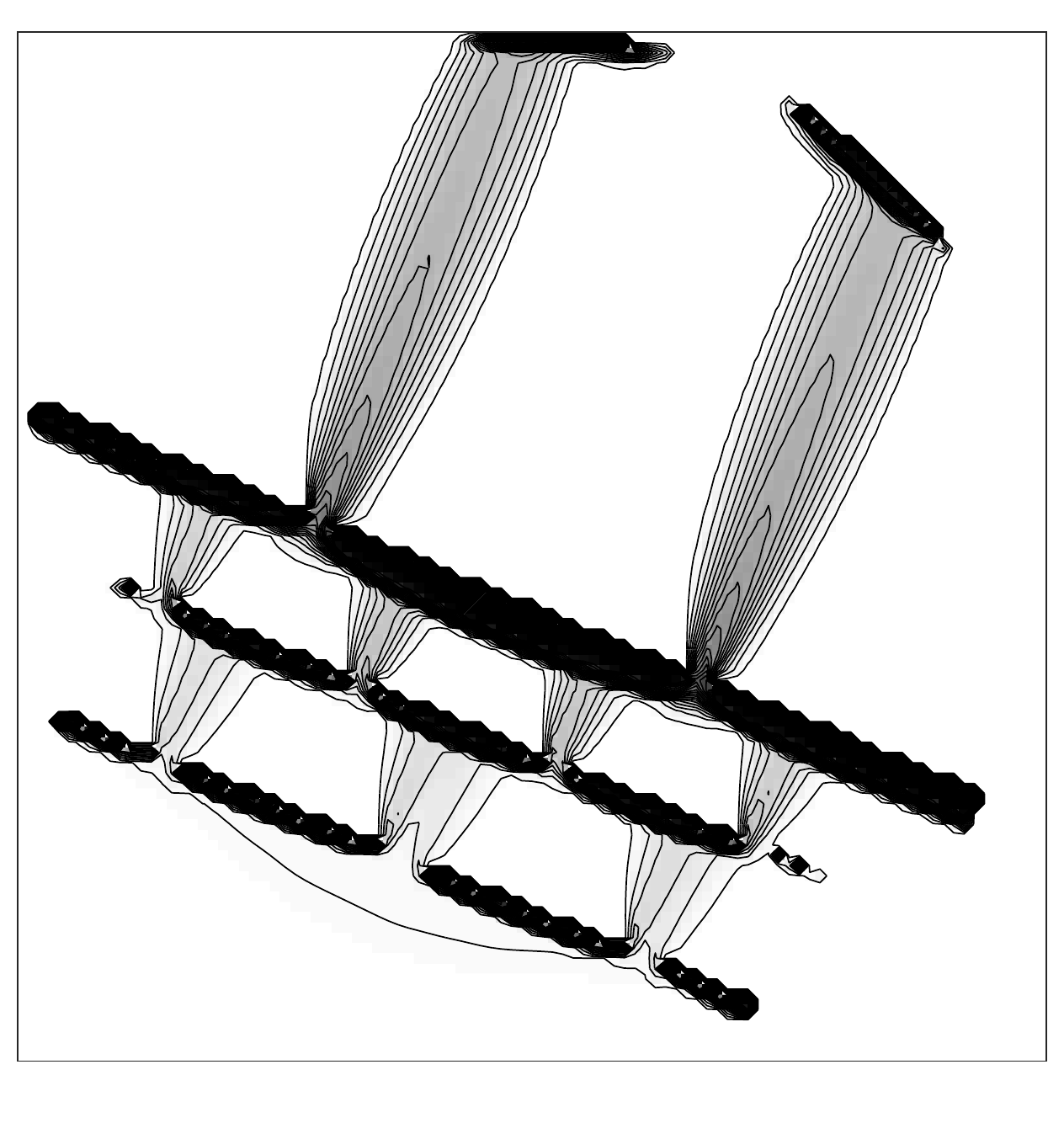}};
    \draw[gray,thick,dashed] (1.05,-1.5) circle (1.01cm);
    \node[anchor=south west,inner sep=0, rotate=45] at (1.115,-5.68) {\includegraphics[scale=0.162]{./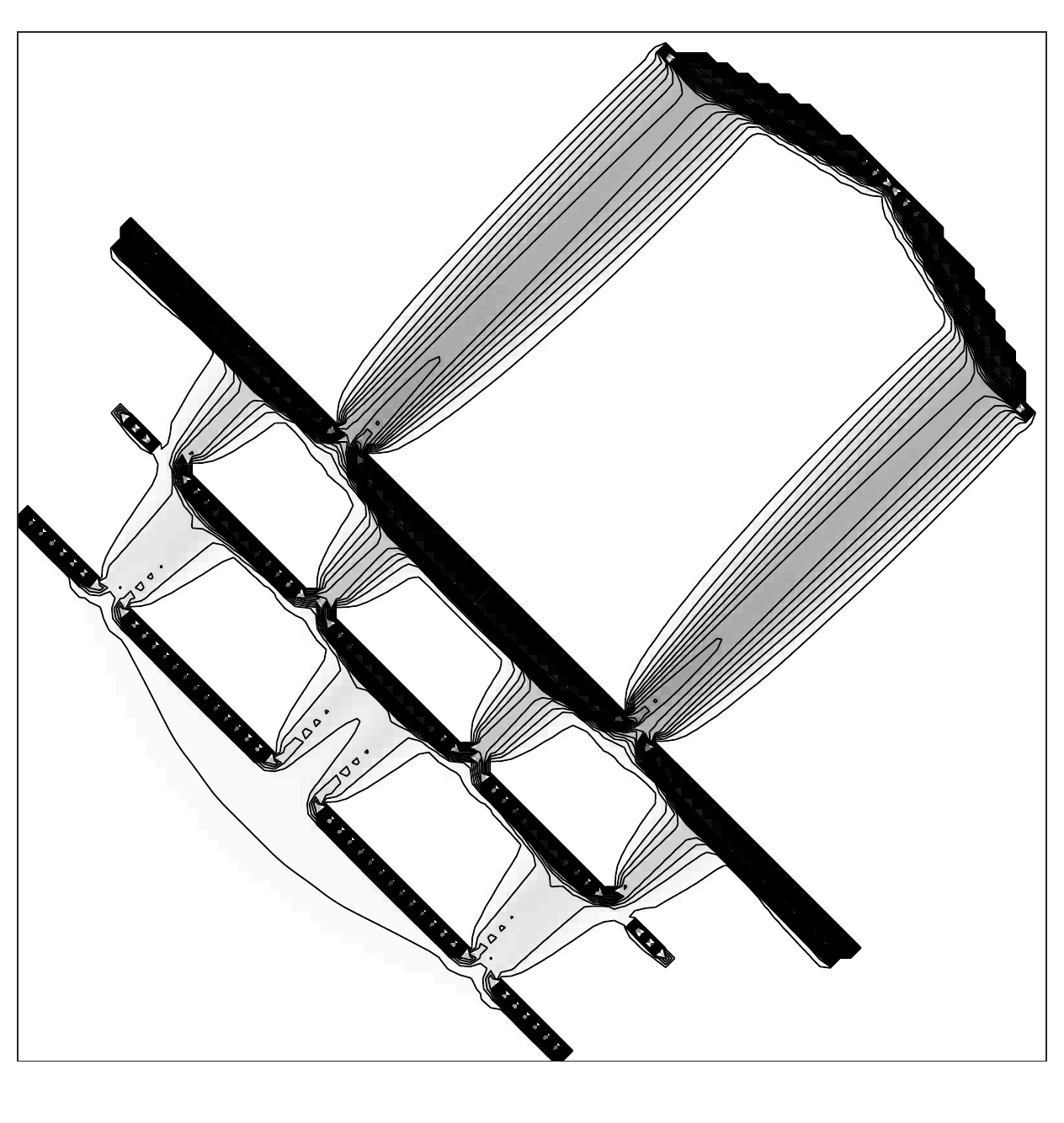}};
    \draw[gray,thick,dashed] (1.05,-4.14) circle (1.01cm);
    \node (ib_4) at (1.05,2.8) {\small MultiD-IHU};
    \node (ib_4) at (1.05,2.451) {\small (SMU4)};
  \end{tikzpicture}
  \vspace{-0.3cm}
\caption{\label{fig:saturation_maps_gravity_segregation_with_low_perm_layers}Saturation maps for different angles $\theta$ 
after 6000 days for the gravity segregation problem with low-permeability layers. The CFL number of these simulations is 8.9.
Outside the dashed circle, the control volumes have an absolute permeability set to $5 \times 10^{-9} \, \text{mD}$.
}
\end{figure}

The nonlinear iteration count is given in Table \ref{tab:nonlinear_behavior_gravity_segregation_low_perm_layers}. For 
this buoyancy-dominated example, the reduction in the number of nonlinear iterations with MultiD-PPU compared to 1D-PPU
is much smaller than in the previous sections. We also note that 1D-IHU exhibits a better nonlinear behavior than MultiD-PPU.
MultiD-IHU still achieves a very robust nonlinear behavior that remains insensitive to the orientation of the computational 
grid. For a CFL number of 8.9 (respectively, 17.8), MultiD-IHU reduces the total number of nonlinear iterations summed over all
cases by 19\% (respectively, 25.8\%) compared to 1D-PPU. It also reduces the number of Newton iterations compared to MultiD-PPU, 
and performs slightly better than 1D-IHU.

\begin{table}[!ht]
\scalebox{0.85}{
\centering
         \begin{tabular}{ccccccccc}
           \\ \toprule
             Angle     & \multicolumn{4}{c}{Smaller CFL}                & \multicolumn{4}{c}{Larger CFL}                    \\ 
             $\theta$  & 1D-PPU & 1D-IHU & MultiD-PPU & MultiD-IHU    & 1D-PPU    & 1D-IHU  & MultiD-PPU    & MultiD-IHU \\ \toprule
              0        &    253 &   238  &     257    &    241        &    199    &    187  &       202     &  182       \\
              $\pi/12$ &    307 &   239  &     292    &    235        &    244    &    171  &       231     &  167       \\
              $\pi/8$  &    290 &   234  &     285    &    231        &    225    &    174  &       222     &  163       \\
              $\pi/6$  &    300 &   236  &     290    &    233        &    226    &    171  &       228     &  158       \\
              $\pi/4$  &    281 &   229  &     277    &    229        &    212    &    167  &       214     &  151       \\ \bottomrule 
         \end{tabular}}
 \caption{\label{tab:nonlinear_behavior_gravity_segregation_low_perm_layers} Total number of Newton iterations for different angles after 6000 days 
in the gravity segregation problem with low-permeability layers. There was no time step cut for any of the schemes considered here.}
\end{table}

\section{\label{section_summary_of_results_and_conclusions}Summary of results and conclusions}

We have proposed a robust fully implicit finite-volume 
scheme for coupled multiphase flow and transport with buoyancy that is 
more accurate than commonly used first-order 1D upwinding schemes.
Specifically, we have presented a truly multidimensional 
extension of Implicit Hybrid Upwinding (IHU), referred to as MultiD-IHU, 
that reduces the numerical biasing due to the orientation of the underlying 
grid while retaining the monotonicity and robustness of IHU in the presence 
of competing viscous and buoyancy forces. This extension builds on the 
methodology proposed in \cite{kozdon2011multidimensional} for Phase-Potential 
Upwinding and is based on an extended stencil that adapts to the local flow
pattern for the evaluation of the phase mobility ratios.

We have tested the proposed scheme on two-phase flow numerical examples 
including competing viscous and buoyancy forces and characterized by the 
presence of countercurrent flow. We have shown that MultiD-IHU reduces 
the grid orientation effect as well as MultiD-PPU for the two first 
numerical examples, and leads to a more significant reduction than 
MultiD-PPU for the gravity segregation case with low-permeability layers. 
We have also demonstrated that MultiD-IHU retains the robust nonlinear 
behavior of 1D-IHU, even for large time steps. This is particularly the 
case for the third numerical example, in which MultiD-IHU reduces the 
number of nonlinear iterations by up 25.8\% compared to 1D-PPU and by 
up 25.2\% compared to MultiD-PPU. The computational gain due to the 
reduction in nonlinear iterations will help offset the added per-iteration 
cost associated with the larger stencils of multidimensional schemes.
Future work includes extending the methodology presented 
here to a robust, fully implicit high-resolution approach that would 
tend to a truly accurate scheme.

\section*{Acknowledgments}
The authors thank the SUPRI-B affiliates program at Stanford University, 
and in particular Prof. Hamdi Tchelepi for his insight and guidance. 
The first author acknowledges the financial support of Total during 
his PhD at Stanford University.

\appendix

\section{\label{app_properties_of_matrix_A}Properties of matrices $\boldsymbol{A}$ and $\boldsymbol{A}^{-1} \boldsymbol{B}$}

We study the properties of the matrices $\boldsymbol{A}$ and
$\boldsymbol{C} \equiv \boldsymbol{A}^{-1} \boldsymbol{B}$
defined in (\ref{definition_A}) and (\ref{definition_B}).
By construction, on each row, $\boldsymbol{A}$ has one positive unit
term on the diagonal and one non-positive off-diagonal term.
The definition of the SMU4 limiter given in 
(\ref{limiter}) guarantees that
\begin{equation}
  0 \leq \varphi^{\textit{SMU4}}(r) < 1 \quad \forall r \in [0,1].
  \label{property_smu4_limiter}
\end{equation}
Combining (\ref{property_smu4_limiter}) with the definition
of the weights in (\ref{definition_omega_v}), we obtain
\begin{equation}
  0 \leq \overline{\omega}^V_{k+1/2} < 1 \quad \forall k \in \{1, \dots, 4\}.
\label{batstoi_francois}
\end{equation}
Therefore, all the off-diagonal terms in $\boldsymbol{A}$ are
strictly smaller than 1, which implies that this matrix is strictly
diagonally dominant in the sense of
\begin{equation}
  |a_{ii}| > \sum_{j \neq i} |a_{ij}| \quad i \in \{1, \dots, 4\}.
  \label{inequality}
\end{equation}
$\boldsymbol{A}$ is therefore a non-singular M-matrix, and
$\boldsymbol{A}^{-1}$ is non-negative. Since
$\boldsymbol{B}$ is also non-negative by construction,
$\boldsymbol{A}^{-1} \boldsymbol{B}$ is non-negative.
To show that for each row of $\boldsymbol{C}$,
the sum of the entries is equal to one, i.e., $\sum_k c_{ik} = 1$
for $i \in \{1, \dots, 4\}$, we define the vector
$\boldsymbol{e} = [1, 1, 1, 1]^T$. Then,
$\boldsymbol{C} \boldsymbol{e} = \boldsymbol{e}$ is equivalent
to $\boldsymbol{A} \boldsymbol{e} = \boldsymbol{B} \boldsymbol{e}$,
which follows from the definitions
(\ref{definition_A}) and (\ref{definition_B}).

\section{\label{app_viscous_flux_monotonicity}Viscous flux monotonicity}

In this section and the next one, we show that the MultiD-IHU
scheme is monotone in the sense of (\ref{monotonicity_conditiona}).
We consider first the monotonicity of the viscous flux by analyzing
the sign of the derivatives with respect to saturation of 
\begin{equation}
\overline{V}_{\ell,k+1/2} - \overline{V}_{\ell,k-1/2}
=
\overline{\chi}_{\ell,k+1/2} \overline{u}_{T,k+1/2}
-
\overline{\chi}_{\ell,k-1/2} \overline{u}_{T,k-1/2},
\end{equation}
for a fixed total velocity field. Following
\cite{kozdon2011multidimensional}, we consider only three cases
since the other cases can be treated by symmetry. We first assume
that
$\overline{u}_{T,k+1/2} \leq 0$
and
$\overline{u}_{T,k-1/2} \geq 0$.
Using (\ref{linear_solution_to_define_overline}) and  matrix
$\boldsymbol{C} = \boldsymbol{A}^{-1} \boldsymbol{B}$, we have
\begin{equation}
\overline{V}_{\ell,k+1/2} - \overline{V}_{\ell,k-1/2}
=
\sum^{4}_{j=1} (c_{kj} \overline{u}_{T,k+1/2} - c_{(k-1)j} \overline{u}_{T,k-1/2}) \chi_{\ell,j}.
\label{case_1_v_k_minus_v_k-1}
\end{equation}
Using the assumption on the sign of the total velocities and the
non-negativity of
$\boldsymbol{C} \equiv \boldsymbol{A}^{-1} \boldsymbol{B}$
shown in \ref{app_properties_of_matrix_A}, 
(\ref{case_1_v_k_minus_v_k-1}) gives
\begin{equation}
\frac{ \partial ( \overline{V}_{\ell,k+1/2} - \overline{V}_{\ell,k-1/2} )}{\partial S_{\ell, j \neq k}} = 
(c_{kj} \overline{u}_{T,k+1/2} - c_{(k-1)j} \overline{u}_{T,k-1/2}) \frac{\partial \chi_{\ell,j}}{\partial S_{\ell, j \neq k}} \leq 0.
\end{equation}
In the second case, we assume that $\overline{u}_{T,k+1/2} \geq 0$
and $\overline{u}_{T,k-1/2} \leq 0$. Using the definition of the
upwinding of the mobility ratio, we can write
\begin{equation}
\overline{V}_{\ell,k+1/2} - \overline{V}_{\ell,k-1/2}
=
\chi_{\ell,k} \overline{u}_{T,k+1/2}
-
\chi_{\ell,k} \overline{u}_{T,k-1/2},
\label{case_2_v_k_minus_v_k-1}
\end{equation}
which yields
\begin{equation}
\frac{ \partial ( \overline{V}_{\ell,k+1/2}
-
\overline{V}_{\ell,k-1/2} )}{\partial S_{\ell, j \neq k}} = 
0.
\end{equation}
The third case is such that $\overline{u}_{T,k+1/2} \geq 0$
and $\overline{u}_{T,k-1/2} \geq 0$. Using the weighted average of
(\ref{definition_chi}), we write
\begin{align}
\overline{V}_{\ell,k+1/2} - \overline{V}_{\ell,k-1/2}
&= (1-\overline{\omega}^V_{k+1/2}) \chi_{\ell,k} \overline{u}_{T,k+1/2}
+ ( \overline{\omega}^V_{k+1/2} \overline{u}_{T,k+1/2}
- \overline{u}_{T,k-1/2} ) \overline{\chi}_{\ell,k-1/2} \nonumber \\
&= (1-\overline{\omega}^V_{k+1/2}) \chi_{\ell,k} \overline{u}_{T,k+1/2}
+ ( \overline{\omega}^V_{k+1/2} \overline{u}_{T,k+1/2}
-
\overline{u}_{T,k-1/2} ) \sum^{4}_{j = 1} c_{(k-1)j} \chi_{\ell,j}.
\label{case_3_v_k_minus_v_k-1}
\end{align}
Taking the derivatives with respect to saturation in
(\ref{case_3_v_k_minus_v_k-1}) gives
\begin{equation}
\frac{ \partial ( \overline{V}_{\ell,k+1/2} - \overline{V}_{\ell,k-1/2} )}{\partial S_{\ell, j \neq k}} = ( \overline{\omega}^V_{k+1/2} \overline{u}_{T,k+1/2} - \overline{u}_{T,k-1/2} ) c_{(k-1)j} \frac{\partial \chi_{\ell,j}}{\partial S_{\ell,j\neq k}} \leq 0,
\end{equation}
since the definition of the limiter guarantees that
$( \overline{\omega}^V_{k+1/2} \overline{u}_{T,k+1/2} - \overline{u}_{T,k-1/2} ) \leq 0$.

\section{\label{app_buoyancy_flux_monotonicity}Buoyancy flux monotonicity}

Next, we consider the monotonicity of the buoyancy term by
studying the sign of the derivative with respect to saturation of
\begin{equation}
\overline{G}_{\ell,k+1/2} - \overline{G}_{\ell,k-1/2} =
\overline{T}_{k+1/2} \overline{\psi}_{\ell,m,k+1/2} (\rho_{\ell} - \rho_m) \overline{\Delta z}_{k+1/2}
-
\overline{T}_{k-1/2} \overline{\psi}_{\ell,m,k-1/2} (\rho_{\ell} - \rho_m) \overline{\Delta z}_{k-1/2}.
\end{equation}
We assume that $\rho_{\ell} - \rho_m > 0$ and
$\overline{\Delta z}_{k+1/2} > 0$. The other cases can be treated
analogously and will be omitted for brevity. On a Cartesian grid,
we only have to study two configurations. In the first
configuration, we assume that
$\overline{\Delta z}_{k+3/2} \overline{\Delta z}_{k+1/2} > 0$
and $\overline{\Delta z}_{k+1/2} \overline{\Delta z}_{k-1/2} < 0$,
which yields
\begin{align}
\overline{G}_{\ell,k+1/2} &= \overline{T}_{k+1/2} \bigg( (1-\overline{\omega}^G_{k+1/2}) \displaystyle \frac{ \lambda_{\ell,k} \lambda_{m,k+1} }{ \lambda_{\ell,k} + \lambda_{m,k+1} }
+ \overline{\omega}^G_{k+1/2} \displaystyle \frac{ \lambda_{\ell,k} \lambda_{m,k+2} }{ \lambda_{\ell,k} + \lambda_{m,k+2} } \bigg) (\rho_{\ell} - \rho_m) \overline{\Delta z}_{k+1/2},  \\
\overline{G}_{\ell,k-1/2} &=  \overline{T}_{k-1/2} \bigg( (1-\overline{\omega}^G_{k-1/2}) \displaystyle \frac{ \lambda_{\ell,k} \lambda_{m,k-1} }{ \lambda_{\ell,k} + \lambda_{m,k-1} }
+ \overline{\omega}^G_{k-1/2}   \displaystyle \frac{ \lambda_{\ell,k} \lambda_{m,k-2} }{ \lambda_{\ell,k} + \lambda_{m,k-2} } \bigg) (\rho_{\ell} - \rho_m) \overline{\Delta z}_{k-1/2}.
\end{align}
Noting that $m \neq \ell$, the sign of the derivatives of the
mobility ratios is given by
\begin{equation}
\displaystyle \frac{\partial}{\partial S_{\ell,j\neq k}} \bigg( \frac{ \lambda_{\ell,k} \lambda_{m,j} }{ \lambda_{\ell,k} + \lambda_{m,j} } \bigg)
= \frac{ \lambda^2_{\ell,k} \displaystyle \frac{\partial \lambda_{m,j}}{\partial S_{\ell,j \neq k}} }{ ( \lambda_{\ell,k} + \lambda_{m,j} )^2 } \leq 0.
\label{sign_derivative_mobility_ratio}
\end{equation}
We now use the assumptions on the densities and the depth to obtain
\begin{equation}
\frac{ \partial ( \overline{G}_{\ell,k+1/2} - \overline{G}_{\ell,k-1/2} ) }{\partial S_{\ell, j \neq k } }  \leq 0.
\end{equation}
In the second configuration, we assume that
$\overline{\Delta z}_{k+3/2} \overline{\Delta z}_{k+1/2} < 0$
and $\overline{\Delta z}_{k+1/2} \overline{\Delta z}_{k-1/2} > 0$.
This case yields
\begin{align}
\overline{G}_{\ell,k+1/2}   &=  \overline{T}_{k+1/2} \bigg( (1-\overline{\omega}^G_{k+1/2}) \displaystyle \frac{ \lambda_{\ell,k} \lambda_{m,k+1} }{ \lambda_{\ell,k} + \lambda_{m,k+1} }
+
\overline{\omega}^G_{k+1/2}   \displaystyle \frac{ \lambda_{\ell,k-1} \lambda_{m,k+1} }{ \lambda_{\ell,k-1} + \lambda_{m,k+1} } \bigg) (\rho_{\ell} - \rho_m) \overline{\Delta z}_{k+1/2},  \\
\overline{G}_{\ell,k-1/2} &=  \overline{T}_{k-1/2} \bigg( (1-\overline{\omega}^G_{k-1/2}) \displaystyle \frac{ \lambda_{\ell,k-1} \lambda_{m,k} }{ \lambda_{\ell,k-1} + \lambda_{m,k} }
+ \overline{\omega}^G_{k-1/2}   \displaystyle \frac{ \lambda_{\ell,k-1} \lambda_{m,k+1} }{ \lambda_{\ell,k-1} + \lambda_{m,k+1} } \bigg) (\rho_{\ell} - \rho_m) \overline{\Delta z}_{k-1/2}.
\end{align}
Using these expressions, we obtain
\begin{align}
\overline{G}_{\ell,k+1/2} - \overline{G}_{\ell,k-1/2} &=  \bigg( \overline{T}_{k+1/2} (1-\overline{\omega}^G_{k+1/2}) \displaystyle \frac{ \lambda_{\ell,k} \lambda_{m,k+1} }{ \lambda_{\ell,k} + \lambda_{m,k+1} } \overline{\Delta z}_{k+1/2}  \nonumber \\
                                   &-  \overline{T}_{k-1/2} (1-\overline{\omega}^G_{k-1/2}) \displaystyle \frac{ \lambda_{\ell,k-1} \lambda_{m,k} }{ \lambda_{\ell,k-1} + \lambda_{m,k} }  \overline{\Delta z}_{k-1/2} \bigg) (\rho_{\ell} - \rho_m )
                                   \nonumber \\
                                  &+ (\overline{T}_{k+1/2} \overline{\omega}^G_{k+1/2} \overline{\Delta z}_{k+1/2} - \overline{T}_{k-1/2} \overline{\omega}^G_{k-1/2} \overline{\Delta z}_{k-1/2}) \displaystyle \frac{ \lambda_{\ell,k-1} \lambda_{m,k+1} }{ \lambda_{\ell,k-1} + \lambda_{m,k+1} } (\rho_{\ell} - \rho_m).
\label{case_2_g_k_minus_g_k-1}
\end{align}
Equation (\ref{symmetry_property}) guarantees that
$\overline{T}_{k+1/2} \overline{\omega}^G_{k+1/2} \overline{\Delta z}_{k+1/2} - \overline{T}_{k-1/2} \overline{\omega}^G_{k-1/2} \overline{\Delta z}_{k-1/2} = 0$
whenever
$\overline{\Delta z}_{k+3/2} \overline{\Delta z}_{k+1/2} < 0$
and
$\overline{\Delta z}_{k+1/2} \overline{\Delta z}_{k-1/2} > 0$.
Therefore the term in the third line of
(\ref{case_2_g_k_minus_g_k-1}) cancels and we can use the sign of the derivatives of the mobility ratios to obtain the result
\begin{equation}
\frac{ \partial ( \overline{G}_{\ell,k+1/2} - \overline{G}_{\ell,k-1/2} ) }{\partial S_{\ell, j \neq k } }  \leq 0.
\end{equation}

\bibliography{biblio}

\end{document}